\RequirePackage{etex} 
\documentclass[12pt,final]{amsart}

\usepackage[utf8]{inputenc}
\usepackage{amssymb,amsthm,shuffle,mathdots,stmaryrd,mathtools,mathrsfs}
\usepackage{hyperref}
\usepackage{cleveref}
\usepackage[notref]{showkeys}
\usepackage[notref]{showkeys}
\usepackage{seqsplit}
\usepackage{xstring}
\renewcommand*\showkeyslabelformat[1]{%
\noexpandarg%
\StrSubstitute{#1}{ }{\textvisiblespace}[\TEMP]%
\fbox{\parbox[t]{\marginparwidth}{\raggedright\normalfont\scriptsize\ttfamily\expandafter\seqsplit\expandafter{\TEMP}}}}
\usepackage{autonum}
\makeatletter
\newcommand{\restore@Environment}[1]{%
  \AtBeginDocument{%
    \csletcs{#1*}{#1}%
    \csletcs{end#1*}{end#1}%
  }%
}
\forcsvlist\restore@Environment{alignat,equation,gather,multline,flalign,align}
\makeatother
\usepackage{bm,bbm}
\usepackage{ytableau}
\usepackage{geometry}
\usepackage{enumitem}
\theoremstyle{plain}
\newtheorem{thm}{Theorem}[section]
\newtheorem{prop}[thm]{Proposition}
\newtheorem{lemma}[thm]{Lemma}
\newtheorem{cor}[thm]{Corollary}

\theoremstyle{definition}
\newtheorem{defn}[thm]{Definition}
\newtheorem{remark}[thm]{Remark}
\newtheorem{example}[thm]{Example}

\crefname{thm}{Theorem}{Theorems}
\crefname{prop}{Proposition}{Propositions}
\crefname{lemma}{Lemma}{Lemmas}
\crefname{cor}{Corollary}{Corollaries}
\crefname{ques}{Question}{Questions}
\crefname{conj}{Conjecture}{Conjectures}

\crefname{defn}{Definition}{Definitions}
\crefname{remark}{Remark}{Remarks}
\crefname{example}{Example}{Examples}
\crefname{prob}{Problem}{Problems}

\crefname{enumi}{}{}
\crefname{enumii}{}{}
\crefname{enumiii}{}{}

\crefformat{equation}{{\upshape(#2#1#3)}}

\numberwithin{equation}{section}

\ytableausetup{aligntableaux=center}

\textwidth= 16cm
\textheight= 24cm
\oddsidemargin= 0cm
\evensidemargin= 0cm
\topmargin= -1.5cm


\newcommand{\svdots}{\raisebox{-2pt}{$\vdots$}} 
\newcommand{\vsmall}{\rotatebox[origin=c]{-90}{$<$}}

\newcommand{\abs}[1]{\lvert #1 \rvert}


\newenvironment{varray}
{\biggl(\setlength{\arraycolsep}{1pt}\begin{matrix}}
{\end{matrix}\biggr)}

\allowdisplaybreaks


\usepackage{tikz}
\usetikzlibrary{calc}
\usetikzlibrary{decorations.pathreplacing}

\tikzset{vertex_black/.style={
circle, draw, black, fill=black,
inner sep=0pt, minimum width=4pt, label distance=1mm }}
\tikzset{vertex_white/.style={
circle, draw, black, fill=white,
inner sep=0pt, minimum width=4pt, label distance=1mm }}
\tikzset{vertex_dots/.style={yshift=0,inner sep=2,minimum width=0pt}}

\DeclareMathOperator{\dep}{dep}
\DeclareMathOperator{\wt}{wt}
\newcommand{\adm}{\mathrm{adm}}
\newcommand{\tot}{\mathrm{tot}}

\DeclareMathOperator{\SSYT}{\operatorname{SSYT}}
\DeclareMathOperator{\YT}{\operatorname{YT}}

\DeclareMathOperator{\Ker}{Ker}

\newcommand{\tsh}{\mathbin{\widetilde{\shuffle}}}
\newcommand{\SSD}{\mathrm{SSD}}

\newcommand{\Z}{\mathbb{Z}}
\newcommand{\Q}{\mathbb{Q}}
\newcommand{\R}{\mathbb{R}}

\newcommand{\bk}{{\boldsymbol{k}}}
\newcommand{\bb}{{\boldsymbol{b}}}
\newcommand{\bl}{{\boldsymbol{l}}}
\newcommand{\bmm}{{\boldsymbol{m}}}
\newcommand{\bn}{{\boldsymbol{n}}}
\newcommand{\bp}{{\boldsymbol{p}}}
\newcommand{\bw}{{\boldsymbol{w}}}

\newcommand{\sX}{\mathscr{X}}
\newcommand{\sT}{\mathscr{T}}

\title{Sum formulas for Schur multiple zeta values}

\author[H. Bachmann]{Henrik Bachmann}
\address{Graduate School of Mathematics,  Nagoya University, Nagoya, Japan.}
\email{henrik.bachmann@math.nagoya-u.ac.jp}

\author[S. Kadota]{Shin-ya Kadota}
\address{Faculty of Fundamental Science, National Institute of Technology (KOSEN), Niihama College, Niihama, Japan.}
\email{s.kadota@niihama-nct.ac.jp}

\author[Y. Suzuki]{Yuta Suzuki}
\address{Department of Mathematics, Rikkyo University, Tokyo, Japan}
\email{suzuyu@rikkyo.ac.jp}

\author[S. Yamamoto]{Shuji Yamamoto}
\address{Faculty of Science and Technology, Keio University, Yokohama, Japan}
\email{yamashu@math.keio.ac.jp}

\author[Y. Yamasaki]{Yoshinori Yamasaki}
\address{Graduate School of Science and Engineering, Ehime University, Matsuyama, Japan}
\email{yamasaki@math.sci.ehime-u.ac.jp}

\date{\today}
\subjclass[2020]{Primary 11M32; Secondary 05E05.}
\keywords{Schur multiple values, Sum formulas, Integrals associated with 2-posets, Jacobi-Trudi formula.}


\begin{document}

\maketitle

\begin{abstract}
In this paper, we study sum formulas for Schur multiple zeta values and give a generalization of the sum formulas for multiple zeta(-star) values. We show that for ribbons of certain types, the sum of Schur multiple zeta values over all admissible Young tableaux of this shape evaluates to a rational multiple of the Riemann zeta value. For arbitrary ribbons with $n$ corners, we show that such a sum can be always expressed in terms of multiple zeta values of depth $\leq n$. In particular, when $n=2$, we give explicit, what we call, bounded type sum formulas for these ribbons. 
Finally, we show how to evaluate this sum when the corresponding Young diagram has exactly one corner and also prove bounded type sum formulas for them. This will also lead to relations among sums of Schur multiple zeta values over all admissible Young tableaux of different shapes. 
\end{abstract}

\section{Introduction}

The purpose of this note is to present several different types of sum formulas for Schur multiple zeta values, which can be seen as generalizations of classical sum formulas for multiple zeta(-star) values. Schur multiple zeta values are real numbers introduced in \cite{NakasujiPhuksuwanYamasaki2018}, and they can be seen as a simultaneous generalization of the multiple zeta values (MZVs) and multiple zeta-star values (MZSVs), which are defined for an index $\bk=(k_1,\dots,k_d) \in \Z_{\geq 1}^d$ with $k_d\geq 2$ by 
\begin{equation} \label{eq:mzv}
\zeta(\bk)\coloneqq\sum_{0<m_1<\cdots<m_d} \frac{1}{m_1^{k_1}\cdots m_d^{k_d}} \,, \qquad \zeta^\star(\bk)\coloneqq\sum_{0<m_1\leq\cdots \leq m_d} \frac{1}{m_1^{k_1}\cdots m_d^{k_d}} \,.
\end{equation}
Here the condition $k_d\geq 2$ ensures the convergence of the above sums, and the index $\bk$ is called admissible in this case. For an index $\bk = (k_1,\dots,k_d)$ we write $\wt(\bk)=k_1+\dots+k_d$ to denote its weight and $\dep(\bk)=d$ for its depth. A classical result is then (\cite{Granville1997},\cite{Hoffman92}), that  the sum of MZ(S)Vs over all admissible indices of fixed weight $w$ and depth $d$ evaluates to (an integer multiple of) $\zeta(w)$, i.e., for $d\geq 1$ and $w\geq d+1$
\begin{align} \label{eq:mzvsumformula}
    \sum_{\substack{\bk \text{ admissible}\\\wt(\bk)=w\\\dep(\bk)=d}} \zeta(\bk) = \zeta(w), \qquad\sum_{\substack{\bk \text{ admissible}\\\wt(\bk)=w\\\dep(\bk)=d}} \zeta^\star(\bk) = \binom{w-1}{d-1}\zeta(w)\,.
\end{align}
Schur MZVs generalize MZ(S)Vs by replacing an index $\bk$ by a skew Young tableau (see Definition \ref{def:schurmzv} for the exact definition). For example, if we have a skew Young diagram $\lambda\slash\mu = (2,2) \slash (1) = \ytableausetup{centertableaux, boxsize=0.6em}\begin{ytableau}
 \none & \, \\
  \, &  \,
\end{ytableau}$ and $k_1, k_3 \geq 1$, $k_2\geq 2$ the Schur MZV of \emph{shape} $\lambda\slash\mu$ for the Young tableau ${\footnotesize \ytableausetup{centertableaux, boxsize=1.3em} \bk = \begin{ytableau}
 \none & k_1 \\
  k_3 &  k_2
\end{ytableau} }\in \YT(\lambda\slash\mu)$ is defined by 
\ytableausetup{centertableaux, boxsize=1.3em}
\begin{align}
\zeta(\bk) = \zeta\left(\ {\footnotesize \begin{ytableau}
\none & k_1 \\
k_3 &  k_2
\end{ytableau}}\ \right) = \sum_{
	\arraycolsep=1.4pt\def\arraystretch{0.8}
	{\footnotesize \begin{array}{ccc}
	& &\,\,\,\,\,\,m_1 \\
    & &\,\,\,\,\,\vsmall  \\
	& m_3 &\leq m_2  
	\end{array} }} \frac{1}{m_1^{k_1}m_2^{k_2}m_3^{k_3}}\,,
\end{align}
where in the sum $m_1,m_2,m_3\geq 1$. 
Schur MZVs generalize MZVs and MZSVs in the sense that we recover them as the special cases
where the Young diagrams are respectively given by 
$(\underbrace{1,\ldots,1}_{d})$ and  $(d)$, i.e.,
\[ \zeta(k_1,\dots,k_d) = \zeta\left(\ {\footnotesize \ytableausetup{centertableaux, boxsize=1.3em}
	\begin{ytableau}
	k_1  \\
	\svdots \\
	k_d
	\end{ytableau}}\ \right) \quad \text{ and }\qquad \zeta^\star(k_1,\dots,k_d) =\zeta\left(\ {\footnotesize \ytableausetup{centertableaux, boxsize=1.3em}
	\begin{ytableau}
	k_1 & \cdots &k_d
	\end{ytableau}}\ \right) \,.   \]
The classical sum formulas \eqref{eq:mzvsumformula} state that for the shapes $\lambda=(1,\dots,1)$ or $\lambda=(d)$ the sum of Schur MZVs over all admissible Young tableaux of shape $\lambda$ and weight $w\geq d+1$ evaluates to (a multiple of) $\zeta(w)$. Therefore, it is natural to ask, how this situation is for a general shape $\lambda \slash \mu$, i.e., if there exist explicit evaluations of 
\begin{align}\label{eq:sk}
    S_w(\lambda \slash \mu) \coloneqq \sum_{\substack{\bk \in \YT(\lambda \slash \mu) \\ \bk \text{ admissible}\\\wt(\bk)=w}} \zeta(\bk)\,.
\end{align}
The optimistic guess that $ S_w(\lambda \slash \mu)$ is always a rational multiple of $\zeta(w)$ seems to be wrong since we will see that, for example, for $\lambda = (2,2)=\ytableausetup{centertableaux, boxsize=0.4em}
	\begin{ytableau}
	\, & 	\,  \\
	\, & 	\,
	\end{ytableau}$ and $\mu =\varnothing$ we have for $w\geq 5$
\begin{equation}\label{eq:22square}\begin{split}
   S_w\left(\ {\footnotesize \ytableausetup{centertableaux, boxsize=0.8em}
	\begin{ytableau}
	\, & 	\,  \\
	\, & 	\,
	\end{ytableau}} \ \right)
  &= -(w-2) \zeta(1,w-1) + (w-4) \zeta(2,w-2)+2\zeta(3,w-3)\\
  & \quad -2\zeta(3)\zeta(w-3)+(w-2)\zeta(2)\zeta(w-2)\,,
	\end{split}
\end{equation}
which, by computer experiments, seems not to evaluate to a rational multiple of $\zeta(w)$ for arbitrary $w$. Nevertheless, we see that \eqref{eq:22square} gives a representation as a sum of products of MZVs, where the number of terms does not depend on $w$. That such an expression exists is not obvious since a priori, the number of terms in \eqref{eq:sk} increases with the weight $w$. It is, therefore, also reasonable to call \eqref{eq:22square} a sum formula. Since it is of a different type to the sum formula of MZVs, we introduce the following 
types of sum formulas for shape $\lambda \slash \mu$: 
\begin{enumerate}[leftmargin=4cm]
    \item[\emph{single type}:] $S_w(\lambda \slash \mu)$ evaluates to a rational multiple of $\zeta(w)$.
    \item[\emph{polynomial type}:] $S_w(\lambda \slash \mu)$ evaluates to a polynomial in $\zeta(m)$ with $m\leq w$.
    \item[\emph{bounded type}:] $S_w(\lambda \slash \mu)$ is expressed as a $\Q$-linear combination of MZVs, where the number of terms does not depend\footnote{By this, we mean that for a fixed $\lambda/\mu$ we can find $a( \bk ,w) \in \mathbb{Q}$ for admissible indices $\bk$ and $w\geq 1$ with $S_{w}(\lambda/\mu)=\sum_{ \bk } a(\bk,w) \zeta(\bk)$ such that there exists a positive real number $C$ with $\abs{\{ \bk \mid
a(\bk,w)\neq 0\}} < C$ for all $w\geq 1$.} on $w$, but just on $\lambda \slash \mu$.
\end{enumerate}
With this terminology, the classical sum formulas are of single type, and the formula \eqref{eq:22square} is of bounded type. 
This list is not intended to be exhaustive, i.e., we do not claim that any shape $\lambda/\mu$ has a sum formula of one of these three types. 
It is also not exclusive, for instance, a single type formula is also both of polynomial and bounded type. 

In this note, we will focus on the case when $\lambda \slash \mu$ has either just one corner (see Section~\ref{subsec:notation} for the definition of corners) or $\lambda \slash \mu$ is a ribbon, by which we mean a (skew) Young diagram which is connected and contains no $2\times 2$ block of boxes. For example, if $\lambda$ is just a column or a row, we obtain the single type sum formulas \eqref{eq:mzvsumformula}. As a generalization we will show (Theorem \ref{thm:anti-hook}) that for $\lambda \slash \mu = (\underbrace{d, \dots, d}_{r}) \slash (\underbrace{d-1, \dots, d-1}_{r-1})$, i.e., when $\lambda \slash \mu$ is an anti-hook, we have
\begin{align}
     S_w(\lambda \slash \mu) =   S_w\left(\ {\footnotesize \ytableausetup{centertableaux, boxsize=1.2em}
	\begin{ytableau}
\none & \none  & \, \\
\none & \none  & \svdots \\
\, & \cdots  & \,
	\end{ytableau}}\ \right) &= \binom{w-1}{d-1}\zeta(w)\,.
\end{align}
This can be seen as a unification of the two classical sum formulas \eqref{eq:mzvsumformula}. In Theorem \ref{thm:stair 1}, we will show that also the ``stairs of tread one'', where the height of each stair is the same,  will give rise to single type sum formulas. For example, as special cases, we get the following formulas, valid for all $w\geq 8$
\begin{align}\label{eq:exampletreadone}
 S_w\left(\ \ytableausetup{centertableaux, boxsize=0.8em}
\begin{ytableau}
\none & \none  & \, \\
\none & \none  & \, \\
\none & \,  & \, \\
\none & \,  & \none \\
\, & \,  & \none \\
\end{ytableau} \ \right)= \frac{(w-8)(w-1)}{2}\zeta(w) \,,\quad 
 S_w\left(\, \ytableausetup{centertableaux, boxsize=0.8em}
\begin{ytableau}
\none & \none  &\none  & \, \\
\none & \none  &\,  & \, \\
\none & \,  &\,  & \none \\
\, & \,  &\none  & \none \\
\end{ytableau}\, \right)= \frac{(w-9)(w-8)(w-1)}{6}\zeta(w) \,.
\end{align}
For arbitrary ribbons with $n$ corners, we will see in Corollary \ref{cor:ncornerribbon} that the corresponding $S_w$ can be always expressed in terms of MZVs of depth $\leq n$.
 Especially, in Theorem \ref{thm:ribbon_2_corner} we give an explicit bounded type sum formula 
 for any ribbon with two corners in terms of double zeta values.

In the last section, we consider shapes that have exactly one corner but which are not necessarily ribbons. We will show that arbitrary shapes with one corner always give rise to sum formulas of bounded type  (Theorem \ref{thm:onecornerwithphi}) similar to \eqref{eq:22square}. This will also lead to relations among $S_w(\lambda / \mu)$ for different shapes $\lambda/ \mu$. For example, as a special case of Theorem \ref{thm:S_w rel}, we will see that for any $w\geq 1$
\begin{align*}
\ytableausetup{centertableaux, boxsize=0.8em}
  2  S_w\!\!\left(\, \begin{ytableau}
	\none & 	\none  & 	\,  \\
    \none & 	\none  & 	\, \\
    \none & 	\,  & 	\, \\
	*(lightgray)\, & 	\, & 	\,
\end{ytableau}\,\right)
  -  S_w\!\!\left(\, \begin{ytableau}
	\none & 	\,  \\
   *(lightgray) \, & 	\,  \\
    \, & 	\,  \\
	\, & 	\,
	\end{ytableau}\,\right)
     -4 S_w\!\!\left(\, \begin{ytableau}
	\none & 	*(lightgray)\,  \\
    \none & 	\,  \\
    \none & 	\,  \\
    \, & 	\,  \\
	\, & 	\,
	\end{ytableau}\,\right)
 = (w-7) S_w\!\!\left(\,  \begin{ytableau}
	\none & 	\,  \\
    \none & 	\,  \\
    \, & 	\,  \\
	\, & 	\,
	\end{ytableau}\,\right)\,.
\end{align*}
Notice that the diagrams appearing in the left-hand side are obtained by adding one box (the gray ones) to the diagram in the right-hand side. 

The structure of this paper is as follows: In Section \ref{sec:weightedsum} we give a recursive formula for a certain weighted sums of MZVs, which will be used in later sections. For this, we will recall the notion of $2$-posets and their associated integrals. In Section \ref{sec:ribbons}, we consider $S_w(\lambda / \mu)$ where $\lambda / \mu$ are ribbons. Section \ref{sec:OneCorner} will be devoted to the case of shapes with exactly one corner.

\subsection{Notation and definition of Schur MZVs}\label{subsec:notation}
We will use the following notation in this work. A tuple $\bk=(k_1,\dots,k_d)\in \Z^d_{\geq 1}$ will be called an \emph{index} and we write $\bk=\varnothing$ when $d=0$. 
We call $\bk$ \emph{admissible} if $k_d\geq 2$ or if $\bk=\varnothing$. For $n\geq 1$ and $k\in \Z_{\ge 1}$ we use for the $n$-time repetition of $k$ the usual notation  $\{k\}^n = \underbrace{k,\dots,k}_{n}$.

A \emph{partition} of a natural number $n$ is a tuple $\lambda = (\lambda_1,\dots,\lambda_h)$ of positive integers
$\lambda_1 \geq \dots \geq \lambda_h \geq 1$ with $n = |\lambda|= \lambda_1 + \dots + \lambda_h$. We will also use the notation $\lambda=(n^{m_n},\ldots,2^{m_2},1^{m_1})$, where $m_i=m_i(\lambda)$ is the multiplicity of $i$ in $\lambda$.
For another partition $\mu=(\mu_1,\dots,\mu_r)$ we write $\mu \subset \lambda$ if $r\leq h$ and $\mu_i\le\lambda_i$ for $i=1,\dots,r$, and we define the \emph{skew Young diagram} $D(\lambda/\mu)$ of $\lambda \slash \mu$ by 
\[D(\lambda \slash \mu) = \left\{(i,j) \in \Z^2 \mid 1 \leq i \leq h\,, \mu_i < j \leq \lambda_i \right\},\]
where $\mu_i = 0$ for $i>r$. In the case where $\mu=\varnothing$ is the empty partition (i.e., the unique partition of zero) we just write $\lambda\slash\mu = \lambda$. 

A \emph{Young tableau} $\bk =  (k_{i,j})_{(i,j) \in D(\lambda \slash \mu)}$ of shape $\lambda \slash \mu$ is a filling of $D(\lambda\slash\mu)$ obtained by putting $k_{i,j}\in\Z_{\geq 1}$ into the $(i,j)$-entry of $D(\lambda\slash\mu)$. For shorter notation, we will also just write $(k_{i,j})$ in the following if the shape $\lambda \slash \mu$ is clear from the context. 
A Young tableau $(m_{i,j})$ is called \emph{semi-standard} if $m_{i,j}<m_{i+1,j}$ and $m_{i,j}\leq m_{i,j+1}$ for all possible $i$ and $j$.
The set of all Young tableaux and all semi-standard Young tableaux of shape $\lambda \slash \mu$ are denoted by $\YT(\lambda \slash \mu)$ and 
$\SSYT(\lambda \slash \mu)$, respectively. 

An entry $(i,j)\in D(\lambda\slash\mu)$ is called an (outer) \emph{corner} of $\lambda \slash \mu$
if $ (i,j+1) \not\in D(\lambda\slash\mu)$ and $ (i+1,j) \not\in D(\lambda\slash\mu)$.
We denote the set of all corners of $\lambda \slash \mu$ by $C(\lambda \slash \mu)$. 
For a Young tableau $\bk = (k_{i,j}) \in\YT(\lambda \slash \mu)$ we define its \emph{weight} by $\wt(\bk) = \sum_{(i,j) \in D(\lambda \slash \mu)} k_{i,j}$
and we call it \emph{admissible} if $k_{i,j} \geq 2$ for all $(i,j) \in C(\lambda\slash\mu)$. 

\begin{defn}\label{def:schurmzv}
For an admissible $\bk = (k_{i,j}) \in \YT(\lambda \slash \mu)$
the \emph{Schur multiple zeta value} (Schur MZV) is defined by  
\begin{align}\label{eq:defschurmzv}
    \zeta(\bk) \coloneqq \sum_{(m_{i,j}) \in \SSYT(\lambda \slash \mu)} \prod_{(i,j) \in D(\lambda \slash \mu)}  \frac{1}{m_{i,j}^{k_{i,j}}} \,.
\end{align}
\end{defn}
Note that the admissibility of $\bk$ ensures the convergence of \eqref{eq:defschurmzv} (\cite[Lemma 2.1]{NakasujiPhuksuwanYamasaki2018}). For the empty tableau $\bk=\varnothing$, we have $\zeta(\varnothing)=1$. 

Finally, we mention that the convention for the binomial coefficients we use in this work is for $n,k \in \Z$ given by 
\begin{align*}
    \binom{n}{k} \coloneqq\begin{cases} \frac{n (n-1) \cdots (n-(k-1))}{k!}& \text{if }k>0,\\
    1& \text{if }k=0,\\
    0& \text{if }k<0.
    \end{cases}
\end{align*}

\subsection*{Acknowledgement}  
The authors thank the referee for his/her careful reading and valuable comments which improved the quality of the manuscript.
The first author was partially supported by JSPS KAKENHI Grant Numbers JP19K14499, JP21K13771.
The third author was partially supported by JSPS KAKENHI Grant Numbers JP19K23402, JP21K13772.
The fourth author was partially supported by JSPS KAKENHI Grant Numbers JP18H05233, JP18K03221, JP21K03185.
The fifth author was partially supported by JSPS KAKENHI Grant Numbers JP21K03206.


\section{Weighted sum formulas} 
\label{sec:weightedsum}
When evaluating sums of Schur MZVs we will often encounter weighted sums of MZVs, which we will discuss in this section. For indices $\bn=(n_1,\ldots,n_d)$, $\bk=(k_1,\ldots,k_d)$ and an integer $l\ge 0$, define
\begin{align}
P_l(\bn;\bk)
&\coloneqq \sum_{\substack{\bw=(w_1,\ldots,w_d): \text{ adm.}\\w_i\ge n_i \ (i=1, \dots, d)\\
\wt(\bw)=\wt(\bk)+l}
}
\prod^{d}_{i=1}\binom{w_i-n_i}{k_i-1}\cdot \zeta(w_1,\ldots,w_d).
\end{align}
Notice that by definition 
$P_l(\varnothing;\varnothing)=1$ if $l=0$ and $0$ otherwise.  
In particular, we put  
$P_l(\bk) \coloneqq P_l((1,\ldots,1);\bk)$
and $Q_l(\bk) \coloneqq P_l((1,\ldots,1,2);\bk)$.
The aim of this section is to obtain explicit bounded expressions of $P_l(\bk)$ and $Q_l(\bk)$, which play important roles throughout the present paper. Here, we say that an expression of $P_l(\bk)$ or $Q_l(\bk)$ is {\it bounded} if the number of terms appearing in the expression does not depend on $l$ but only on $\bk$. Notice that, since $\binom{w-2}{k-1}=\sum^{k-1}_{j=0}(-1)^j\binom{w-1}{k-j-1}$, we have 
\begin{equation}
\label{for:PtoQ}
Q_l(\bk)
=\sum^{k_d-1}_{j=0}(-1)^jP_{l+j}(k_1,\ldots,k_{d-1},k_d-j)\,
\end{equation}
and in particular $Q_l(\bk)=P_l(\bk)$ if $\bk$ is non-admissible. 
By \eqref{for:PtoQ}, it is sufficient to study only $P_l(\bk)$. 
Moreover, we may assume that $l>0$ because the case $l=0$ is trivial: $P_0(\bk)=\zeta(\bk)$ if $\bk$ is admissible and $0$ otherwise, and $Q_0(\bk)=0$ for any index $\bk$. Furthermore, when $d=1$, we have for $k\ge 1$ and $l>0$
\[
P_l(k)
=\binom{l+k-1}{k-1}\zeta(l+k)\,,
\quad 
Q_l(k)
=\binom{l+k-2}{k-1}\zeta(l+k)
\]
and therefore also assume that $d\ge 2$.

The next proposition asserts that $P_l(\bk)$ satisfies recursive formulas with respect to $l$, which can be described by using the Schur MZVs of anti-hook shape 
\begin{align}\begin{split}\label{eq:antihookzeta}
\zeta\begin{varray}
\bl \\ \bk 
\end{varray}
={}&
\zeta
\begin{varray}
l_1,\ldots,l_s\\
k_1,\ldots,k_r
\end{varray}\\
\coloneqq{}&
\ytableausetup{boxsize=1.2em}
\zeta\left(\;\begin{ytableau}
\none & \none & \none & k_1 \\
\none & \none & \none & \svdots \\
l_1 & \cdots & l_s & k_r
\end{ytableau}\;\right)
=
\sum_{0<a_1<\cdots<a_r\ge b_s\ge\cdots\ge b_1>0}
\frac{1}{a_1^{k_1}\cdots a_r^{k_r}b_1^{l_1}\cdots b_s^{l_s}}
\end{split}
\end{align}
with $\bl=(l_1,\ldots,l_s)$ being an index and $\bk=(k_1,\ldots,k_r)$ a non-empty admissible index. 

\begin{prop}
\label{prop:bwsumformula}
Let $d\ge 2$ and $l>0$.
\begin{enumerate}[label=\textup{(\roman*)}]
\item 
If $\bk=(k_1,\ldots,k_d)$ is admissible, then it holds that  
\begin{equation}
\label{eq:general weighted sum adm}
P_{l}(\bk)
=\sum_{i=1}^d\sum_{a_i=0}^{k_i-1} (-1)^{k_1+\cdots+k_{i-1}+a_i}
P_{k_i-1-a_i}(k_{i-1},\ldots,k_1,l+1)\, P_{a_i}(k_{i+1},\ldots,k_d)\,. 
\end{equation}
\item 
If $\bk=(k_1,\ldots,k_d)$ is non-admissible (i.e., $k_d=1$), then it holds that
\begin{equation}
\label{eq:general weighted sum nonadm}
\begin{aligned}
&P_{l}(\bk)
=\sum^{d}_{i=1}\sum_{a_{i}=0}^{k_{i}-1}
(-1)^{k_1+\cdots+k_{i-1}+a_{i}}P_{k_{i}-1-a_i}(k_{i-1},\ldots,k_{1},l+1)P_{a_i}(k_{i+1},\ldots,k_{d-1},1)\\
&\ \ +\sum_{i=1}^{d-1}(-1)^{l+d+k_i}
\sum_{\substack{(b_0,\ldots,b_{d-1})\in\Z_{\ge 1}^d\\
b_i=2\\
b_0+\cdots+b_{d-1}=\wt(\bk)+l+1}}(-1)^{b_0+b_1+\cdots+b_{i-1}}
\binom{b_0-1}{l}
\Biggl\{\prod^{d-1}_{\substack{j=1 \\ j\ne i}}\binom{b_j-1}{k_j-1}\Biggr\}\\
&\ \ \times 
\sum_{j=i}^{d-1}\sum_{c_j=1}^{b_j-1}(-1)^{c_j+j+b_{j+1}+\cdots +b_{d-1}}
\zeta\begin{varray}
c_j,b_{j+1},\ldots,b_{d-1}\\
b_{i-1},\ldots,b_1,b_0
\end{varray} \zeta(b_{i+1},\ldots,b_{j-1},b_{j}-c_{j}+1)\,.
\end{aligned}
\end{equation}
\end{enumerate}
\end{prop}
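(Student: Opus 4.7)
The plan is to realize $P_l(\bk)$ as an iterated integral attached to an appropriate 2-poset (the framework reviewed earlier in this section) and to derive both recursions by decomposing the integration domain against the extra factors contributed by the weight $l$.

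A convenient preliminary is the generating-function identity
\[
\sum_{l\ge 0} P_l(\bk)\,x^l = \sum_{0<m_1<\cdots <m_d} \prod_{i=1}^d \frac{1}{(m_i-x)^{k_i}},
\]
which follows by exchanging the order of summation in the left-hand side and applying $\sum_{w\ge k}\binom{w-1}{k-1}(x/m)^{w}=x^k/(m-x)^k$. The right-hand side is a multiple-polylogarithm-type object admitting an iterated-integral representation in which the parameter $x$ contributes additional $dt/(1-xt)$-type forms; extracting the coefficient of $x^l$ then produces exactly $l$ extra $dt/(1-t)$ factors to be interleaved with the forms encoding $\bk$, placing $P_l(\bk)$ into the 2-poset integral framework uniformly in $l$.

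For (i), I would then apply a shuffle-type decomposition of these $l$ extra factors against the chain structure of $\bk$, combined with the standard duality / path-reversal identity for 2-poset integrals. Splitting according to which of the $d$ components of $\bk$ ``absorbs'' the extra weight produces the outer sum over $i$: the reversed prefix $(k_{i-1},\ldots,k_1,l+1)$ in the first $P$-factor is the image of $(k_1,\ldots,k_{i-1})$ under duality, with the extra weight attached at its tail; the sign $(-1)^{k_1+\cdots+k_{i-1}+a_i}$ records the reorientation of forms caused by the reversal; and the inner sum over $a_i$ distributes the weight $k_i$ at the splitting position between the two resulting factors.

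For (ii) the same formal decomposition is valid, but the naive integral diverges because $k_d=1$ makes the associated 2-poset non-admissible at the endpoint, so one must regularize (for instance by truncating the last integration variable or by introducing a shuffle regulator). The convergent part of the decomposition reproduces the first double sum of \eqref{eq:general weighted sum nonadm} exactly as in (i), while the divergent boundary contributions reorganize, after cancellation of the regulator-dependent terms, into the anti-hook Schur MZVs \eqref{eq:antihookzeta}: the horizontal arm $(b_{i-1},\ldots,b_1,b_0)$ records the reversed prefix at the ``blow-up'' position $i$, and the vertical arm $(c_j,b_{j+1},\ldots,b_{d-1})$ records how the divergent tail is resolved into a second split at some later position $j\ge i$. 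The main obstacle is the sign and binomial bookkeeping in (ii) --- producing the stated sign factors $(-1)^{b_0+\cdots+b_{i-1}}$ and $(-1)^{c_j+j+b_{j+1}+\cdots+b_{d-1}}$ correctly and verifying that all regulator dependence cancels so that only the finite anti-hook sum survives; part (i), by contrast, should follow cleanly from the 2-poset shuffle identity once the integral representation is set up.
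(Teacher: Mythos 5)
Your starting point is sound and coincides with the paper's: everything rests on realizing $P_l(\bk)$ as the integral of a $2$-poset in which the extra weight $l$ is carried by a separate chain attached above the bottom vertex (this is \eqref{for:P is I of X}), and your generating-function identity is a legitimate way to arrive at that representation for admissible $\bk$. But two details are already off. First, the $l$ extra forms are $\omega_0=dt/t$ factors (white vertices), not $dt/(1-t)$ factors; it is exactly the shuffling of $l$ copies of $\omega_0$ into the blocks $\omega_0^{k_i-1}$ that produces the binomials $\binom{w_i-1}{k_i-1}$. Second, for non-admissible $\bk$ the identity $\sum_l P_l(\bk)x^l=\sum\prod_i(m_i-x)^{-k_i}$ fails as stated, since the right-hand side contains the divergent terms with $w_d=1$ that the admissibility condition in the definition of $P_l$ excludes. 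More seriously, for part (i) you never actually perform the decomposition: in \eqref{eq:general weighted sum adm} the outer sum over $i$ indexes the position at which the poset is cut into a \emph{disjoint union} of two posets of the same type (with $k_i$ split as $(k_i-1-a_i)+a_i$ between the two pieces), and the reversed prefix $(k_{i-1},\ldots,k_1,l+1)$ arises from transporting the blocks of the $\bk$-chain one at a time across the bottom vertex via $[X]=[X_a^b]+[X_b^a]$, each transport contributing a factor $(-1)^{k_j}$ to the remainder term --- not from the order-reversing duality involution, and not from ``which component absorbs the extra weight'' (the weight $l$ stays intact as the entry $l+1$ in every term). Saying the signs ``record the reorientation'' describes the answer rather than deriving it.

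The gap in part (ii) is more fundamental. $P_l(\bk)$ is a convergent sum by definition even when $k_d=1$ (it runs only over admissible $\bw$), so there is nothing to regularize; what fails is only that the associated $2$-poset is non-admissible. The paper's device is the admissible part $[X]^{\adm}$ of the totally ordered expansion, which is defined for every $2$-poset, satisfies $P_l(\bk)=I([X_l(\bk)]^{\adm})$ unconditionally, and allows one to carry out the whole decomposition first and take admissible parts at the end --- no regulator is ever introduced, so no cancellation of regulator-dependent terms has to be checked. Your proposed route (a shuffle regulator plus an asserted cancellation) is precisely the part of the argument that would have to be built from scratch, and it is not clear it goes through: the map $[X]\mapsto I([X]^{\adm})$ is \emph{not} an algebra homomorphism (as the paper emphasizes), so the non-admissible disjoint unions $X_{i,a}$ do not simply contribute regularized products. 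Concretely, what is missing from your sketch is the computation $[X_{i,a}]^{\adm}=[X_{i,a}^{(1)}]+[X_{i,a}^{(2)}]+[X_{i,a}^{(3)}]$, the Pascal-rule merge \eqref{for:X1X2}, and the evaluation of $I([Y_i(\bp)])$ in Lemma \ref{lem:I of Y}, which is where the anti-hook values \eqref{eq:antihookzeta} actually enter via Theorem \ref{thm:integral series identity}; asserting that the boundary contributions ``reorganize into'' anti-hooks with the stated signs and binomials is the statement to be proved, not a proof of it.
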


\begin{remark}\label{rem:plisbounded}
The number of terms in the expression 
\eqref{eq:general weighted sum nonadm}
is actually bounded 
by a constant depending on $\bk$ but not on $l$ 
by the following reason: By the non-vanishing of the binomial coefficient $\binom{b_0-1}{l}$,
the summation variable $b_0$ can be restricted by $b_0\ge l+1$
and then the other summation variables are bounded independently of $l$.
Also, by applying the formula \eqref{eq:zeta(bk) via SSD} to an anti-hook, we can expand each term 
\[\zeta\begin{varray}
c_j,b_{j+1},\ldots,b_{d-1}\\
b_{i-1},\ldots,b_1,b_0
\end{varray}\]
into a sum of MZVs. The number of appearing MZVs is independent of $b_0,\ldots,b_{d-1}$ and $c_j$. 
Finally, by using the harmonic product formula,
we can rewrite the products of MZVs into a sum of MZVs.
The resulting number of terms, after the application of the harmonic product formula, depends only on the 
number of entries of MZVs appearing as above and so it is independent of $l$.
\end{remark}

To prove Proposition \ref{prop:bwsumformula}, we first recall the notion of 2-posets and the associated integrals introduced by the fourth-named author in \cite{YamamotoIntegral}.

\begin{defn}[{\cite[Definition~2.1]{YamamotoIntegral}}]\leavevmode
\begin{enumerate}[label=\textup{(\roman*)}]
\item A $2$-poset is a pair $(X,\delta_X)$, 
where $X=(X,\le)$ is a finite partially ordered set (poset for short) 
and $\delta_X$ is a map (called the label map of $X$) from $X$ to $\{0,1\}$. 
We often omit $\delta_X$ and simply say ``a $2$-poset $X$''. 
Moreover, a 2-poset $X$ is called admissible if $\delta_X(x)=0$ for all maximal elements $x$ 
and $\delta_X(x)=1$ for all minimal elements $x$.
\item For an admissible $2$-poset $X$, the associated integral $I(X)$ is defined by
\begin{equation}
\label{def:Yintegral}
I(X)=\int_{\Delta_X}\prod_{x\in X}\omega_{\delta_X(x)}(t_x)\,,
\end{equation}
where $\Delta_X=\left\{\left.(t_x)_{x}\in [0,1]^{X}\,\right|\,\text{$t_x<t_y$ if $x<y$}\right\}$ and 
$\omega_0(t)=\frac{dt}{t}$ and $\omega_1(t)=\frac{dt}{1-t}$.
\end{enumerate}
\end{defn}

We depict a $2$-poset as a Hasse diagram in which an element $x$ with $\delta_X(x)=0$ (resp. $\delta_X(x)=1$) is represented by $\circ$ (resp. $\bullet$). For example, the diagram 
\begin{align}
\label{ex:YI}
\begin{tikzpicture}[thick,x=10pt,y=10pt,baseline=(base)]
\coordinate (base) at (3,2);
\node[vertex_black] (B1) at (0,0) {};
\node[vertex_white] (W11) at (0,1) {}; 
\node[vertex_black] (B2) at (0,2) {};
\node[vertex_black] (B3) at (0,3) {};
\node[vertex_white] (W31) at (0,4) {};
\node[vertex_black] (B4) at (2,2) {};
\node[vertex_white] (W41) at (3,3) {};
\node[vertex_white] (W42) at (4,4) {};
\node[vertex_black] (B5) at (5,3) {};
\node[vertex_white] (W51) at (6,4) {};
\draw{
(B1)--(W11)--(B2)--(B3)--(W31)--(B4)--(W41)--(W42)--(B5)--(W51)
};
\end{tikzpicture}
\end{align}
represents the $2$-poset $X=\{x_1,\ldots,x_{10}\}$ with order $x_1<x_2<x_3<x_4<x_5>x_6<x_7<x_8>x_9<x_{10}$ and label $(\delta_X(x_1),\ldots,\delta_X(x_{10}))=(1,0,1,1,0,1,0,0,1,0)$.

In \cite{KanekoYamamoto2018}, it is shown that the Schur MZVs of anti-hook shape has the following expression by the associated integral of a $2$-poset. This can be regarded as simultaneous generalization of the integral expressions of MZVs and MZSVs.

\begin{thm}[{\cite[Theorem~4.1]{KanekoYamamoto2018}}]
\label{thm:integral series identity}
For an index $\bl=(l_1,\ldots,l_s)$ and a non-empty admissible index $\bk=(k_1,\ldots,k_r)$, we have 
\begin{align}
\zeta\begin{varray}
\bl \\ \bk 
\end{varray}
=
I
\left(
\begin{tikzpicture}[thick,x=8pt,y=8pt,baseline=44pt]
\node[vertex_black] (Bi) at (0,0) {};
\node[vertex_white] (Wi-11) at (0,1) {};
\node[vertex_white] (Wi-12) at (0,3) {};
\node[vertex_black] (B2) at (0,5) {}; 
\node[vertex_white] (W11) at (0,6) {};
\node[vertex_white] (W12) at (0,8) {};
\node[vertex_black] (B1) at (0,9) {};
\node[vertex_white] (W01) at (0,10) {}; 
\node[vertex_white] (W02) at (0,12) {};
\node[vertex_black] (Bd) at (3,9) {};
\node[vertex_white] (Wd1) at (4,10) {}; 
\node[vertex_white] (Wd2) at (6,12) {}; 
\node[vertex_black] (Bi+2) at (10,9) {};
\node[vertex_white] (Wi+21) at (11,10) {}; 
\node[vertex_white] (Wi+22) at (13,12) {}; 
\draw{
(Bi)--(Wi-11) (B2)--(W11) (W12)--(B1)--(W01) (W02)--(Bd)--(Wd1) 
(Wd2)--(7,10.5) (9,10.5)--(Bi+2)--(Wi+21) (Wi+22)
};
\draw[densely dotted] {
(Wi-11)--(Wi-12)--(B2) (W11)--(W12) (W01)--(W02) (Wd1)--(Wd2) 
(7.3,10.5)--(8.7,10.5) (Wi+21)--(Wi+22) 
};
\draw[decorate,decoration={brace,amplitude=3},xshift=-2pt,yshift=0pt]
(0,1)--(0,3) node [midway,xshift=-18pt,yshift=0pt]{\scriptsize $k_{1}-1$};
\draw[decorate,decoration={brace,amplitude=3},xshift=-2pt,yshift=0pt]
(0,6)--(0,8) node [midway,xshift=-24pt,yshift=0pt]{\scriptsize $k_{r-1}-1$};
\draw[decorate,decoration={brace,amplitude=3},xshift=-2pt,yshift=0pt]
(0,10)--(0,12) node [midway,xshift=-18pt,yshift=0pt]{\scriptsize $k_r-1$};
\draw[decorate,decoration={brace,amplitude=3},xshift=-2pt,yshift=2pt]
(4,10)--(6,12) node [midway,xshift=-15pt,yshift=4pt]{\scriptsize $l_{s}-1$};
\draw[decorate,decoration={brace,amplitude=3},xshift=-2pt,yshift=2pt]
(11,10)--(13,12) node [midway,xshift=-15pt,yshift=4pt]{\scriptsize $l_{1}-1$};
\end{tikzpicture}
\ \right)\,. 
\end{align}
\end{thm}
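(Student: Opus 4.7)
The plan is to expand $I(X)$ into a series by using the geometric expansion at the black vertices, then to identify the result with the defining series of $\zeta\begin{varray}\bl\\\bk\end{varray}$. First, I would label the black vertices of $X$ in accordance with the anti-hook shape: the $r$ black vertices on the main vertical chain (reading bottom to top) carry summation variables $a_1,\ldots,a_r$ corresponding to $k_1,\ldots,k_r$, and the $s$ black vertices on the horizontal branch carry $b_1,\ldots,b_s$ corresponding to $l_1,\ldots,l_s$; above each such black vertex sits a chain of white vertices of length $k_i-1$ or $l_j-1$ as prescribed by the diagram.

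Next, I would apply the geometric-series expansion $\omega_1(t_x)=\sum_{m\ge 1} t_x^{m-1}\,dt_x$ at every black vertex and interchange summation with integration (justified by absolute convergence, guaranteed by the admissibility of $\bk$). The integral $I(X)$ becomes a sum of iterated integrals of monomials over the simplex $\Delta_X$, which one can evaluate by sweeping upward through the Hasse diagram. Explicitly, for each block consisting of a single black vertex $x$ (carrying the variable $t_x$ with geometric index $m$) followed by a chain of $k-1$ white vertices ending just below some variable $u$, the iterated integration produces the factor $u^m/m^k$. The strictness of the resulting inequalities among the $a_i, b_j$ is controlled by the local shape of $X$: consecutive black vertices on the main vertical chain, separated by $\omega_0$'s, force strict $a_i<a_{i+1}$, while consecutive black vertices on the horizontal branch sharing a common white peak of the zigzag yield non-strict $b_j\le b_{j+1}$; the peak at the corner likewise gives $b_s\le a_r$. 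Collecting the factors one then recovers the series \eqref{eq:antihookzeta}.

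The main obstacle is the combinatorial verification that the zigzag peaks on the branch and at the corner translate into exactly the non-strict inequalities of the anti-hook shape, rather than the strict ones familiar from ordinary MZVs. The cleanest way to handle this, I expect, is by induction on $s$: peel off the outermost peak of the branch, check that its integration over the peak variable produces a convolution kernel of the form $1/(b_j+b_{j+1})$ whose resulting sum, after reparameterization, exhibits exactly the $b_j\le b_{j+1}$ ordering, and then reduce to a smaller anti-hook 2-poset of the same form. The base cases $s=0$ (pure MZV) and $r=1$ (pure MZSV, whose integral representation is standard) anchor the induction, and combining them closes the argument.
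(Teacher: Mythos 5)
The paper does not actually prove this statement: it is imported verbatim as \cite[Theorem~4.1]{KanekoYamamoto2018}, so there is no internal proof to compare against and your sketch must stand on its own. Its skeleton (geometric expansion of $\omega_1$ at the black vertices, integration up the Hasse diagram, induction on $s$, with the MZV chain and the MZSV zigzag as base cases) is the right kind of argument, and you correctly locate the main obstacle. But the way you propose to resolve that obstacle does not work as stated, and it is precisely where the entire content of the theorem lives.

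Two concrete problems. First, your block rule ``a black vertex followed by $k-1$ whites ending just below a variable $u$ contributes $u^m/m^k$'' is valid on the main vertical chain, where each block terminates below the next black vertex, but fails on the branch: there each chain of $l_j-1$ whites terminates at a \emph{maximal} element (a peak), which is simultaneously an upper bound for the next black vertex, and integrating such a chain up to $1$ produces partial integrals like $\int_0^v t^{b-1}\tfrac{(-\log t)^{l-1}}{(l-1)!}\,dt=\sum_{j=0}^{l-1}\tfrac{v^{b}(-\log v)^{j}}{j!\,b^{l-j}}$, which carry extra logarithmic terms and do not factor block by block. Second, the claim that the convolution kernel $1/(b_j+b_{j+1})$ ``after reparameterization exhibits exactly the $b_j\le b_{j+1}$ ordering'' is not correct as a statement about reindexing. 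Already for $r=s=1$, $k_1=2$, $l_1=1$ the integral evaluates to $\sum_{a,b\ge1}\tfrac{1}{ab(a+b)}$ while the target series is $\sum_{a\ge b\ge1}\tfrac{1}{a^2b}$; no change of summation variables matches these term by term (the naive substitution $c=a+b$ yields $2\zeta(1,2)$ versus $\zeta(1,2)+\zeta(3)$, equal only by a nontrivial MZV identity). What actually closes the gap is the partial-fraction identity $\tfrac{1}{b(a+b)}=\tfrac1a\bigl(\tfrac1b-\tfrac1{a+b}\bigr)$ followed by the telescoping $\sum_{b\ge1}\bigl(\tfrac1b-\tfrac1{a+b}\bigr)=\sum_{b=1}^{a}\tfrac1b$, which is what manufactures the non-strict bound $b\le a$; for larger $l_j$ the telescoping must combine \emph{several} of the logarithmic terms above before the $\le$-nested sum emerges. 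This partial-fraction/telescoping mechanism is the heart of Kaneko--Yamamoto's argument and is absent from your sketch, so the induction on $s$ does not close as written.
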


For example, for the $2$-poset $X$ given by \eqref{ex:YI}, we have 
$\zeta\begin{varray}
2,3 \\ 2,1,2 
\end{varray}
=I(X)$.

In our proof of \cref{prop:bwsumformula}, we consider a kind of extention of the integral $I(X)$ 
to non-admissible $2$-posets $X$. This extension is given by using the notion of ``admissible part'' 
which we define below. 

Let $\sX$ be the set of isomorphism classes of $2$-posets, 
and $\Q\sX$ denote the $\Q$-vector space freely generated by this set. 
We equip $\Q\sX$ with a $\Q$-algebra structure by setting $[X]\cdot[Y]\coloneqq[X\sqcup Y]$. 
If we let $\sX^0\subset\sX$ be the subset consisting of admissible $2$-posets, 
its $\Q$-span $\Q\sX^0$ becomes a $\Q$-subalgebra of $\Q\sX$ and 
the integral \eqref{def:Yintegral} defines a $\Q$-algebra homomorphism $I\colon\Q\sX^0\to\R$. 

Let $\sT\subset\sX$ be the subset of totally ordered $2$-posets. 
Then a $\Q$-linear map $\Q\sX\to\Q\sT$, which we call the \emph{totally ordered expansion}, 
is defined by 
\[[X]=[X,\le,\delta]\longmapsto [X]^\tot\coloneqq\sum_{\le'}[X,\le',\delta], \]
where $[X]=[X,\le,\delta]$ is the isomorphism class of any $2$-poset $X$ 
and $\le'$ runs over the total orders on the set $X$ 
which are refinements of the original partial order $\le$. 
We have $[X]^\tot=[X_a^b]^\tot+[X_b^a]^\tot$ for any $2$-poset $X$ and non-comparable elements $a,b\in X$, 
where $X^b_a$ denotes the 2-poset obtained from $X$ by adjoining the relation $a<b$. 
Note also that the integration map $I\colon\Q\sX^0\to\R$ factors through the totally ordered expansion, 
i.e., we have $I([X])=I([X]^\tot)$ for any $[X]\in\sX^0$. 

For any $2$-poset $X$, we define its \emph{admissible part} $[X]^\adm$ to be 
the partial sum of the totally ordered expansion $[X]^\tot$ consisting of the admissible terms. 
For example, if $X=
\begin{tikzpicture}[thick,x=10pt,y=10pt,baseline=10pt]
\node[vertex_white] (W1) at (-1,1) {}; 
\node[vertex_black] (B1) at (0,0) {};
\node[vertex_white] (W2) at (1,1) {}; 
\node[vertex_black] (B2) at (1,2) {};
\draw{
(W1)--(B1)--(W2)--(B2)
};
\end{tikzpicture}
\,$, we have 
\[
[X]^\tot=
\left[\ 
\begin{tikzpicture}[thick,x=10pt,y=10pt,baseline=10pt]
\node[vertex_black] (B1) at (0,0) {};
\node[vertex_white] (W1) at (0,1) {}; 
\node[vertex_black] (B2) at (0,2) {};
\node[vertex_white] (W2) at (0,3) {}; 
\draw{
(B1)--(W1)--(B2)--(W2)
};
\end{tikzpicture}
\ \right] 
+2
\left[\ 
\begin{tikzpicture}[thick,x=10pt,y=10pt,baseline=10pt]
\node[vertex_black] (B1) at (0,0) {};
\node[vertex_white] (W1) at (0,1) {}; 
\node[vertex_white] (W2) at (0,2) {}; 
\node[vertex_black] (B2) at (0,3) {};
\draw{
(B1)--(W1)--(W2)--(B2)
};
\end{tikzpicture}
\ \right]\, \text{ and }\  
[X]^{\adm}=
\left[\ 
\begin{tikzpicture}[thick,x=10pt,y=10pt,baseline=10pt]
\node[vertex_black] (B1) at (0,0) {};
\node[vertex_white] (W1) at (0,1) {}; 
\node[vertex_black] (B2) at (0,2) {};
\node[vertex_white] (W2) at (0,3) {}; 
\draw{
(B1)--(W1)--(B2)--(W2)
};
\end{tikzpicture}
\ \right]\,. \]
Then the $\Q$-linear map 
\[\Q\sX\longrightarrow\R;\ [X]\longmapsto I([X]^\adm)\]
is an extension of $I\colon\Q\sX^0\to\R$. 
Notice that this map is \emph{not} an algebra homomorphism (for example, $[\circ]\cdot [\bullet]\longmapsto
I\left(\left[\ 
\begin{tikzpicture}[thick,x=10pt,y=10pt,baseline=1.5pt]
\node[vertex_black] (B1) at (0,0) {};
\node[vertex_white] (W1) at (0,1) {}; 
\draw{
(B1)--(W1)
};
\end{tikzpicture}
\ \right]\right)$ but $[\circ],[\bullet]\longmapsto 0$), 
unlike the map defined by using the shuffle regularization (see Remark at the end of \S 2 of \cite{YamamotoIntegral}). 

In the following computations, we omit the symbol `$\tot$'; 
for example, we write $[X]=[X_a^b]+[X_b^a]$ instead of $[X]^\tot=[X_a^b]^\tot+[X_b^a]^\tot$. 
In other words, we compute in the quotient space $\Q\sX/\Ker([X]\mapsto[X]^\tot)$. 



\begin{proof}
[Proof of Proposition~\ref{prop:bwsumformula}]
Set $m_i=k_i-1$ for $1\le i\le d$. 
Define
\[
X=X_{l}(\bk)=
\begin{tikzpicture}[thick,x=8pt,y=8pt,baseline=44pt]
\node[vertex_black] (B1) at (0,0) {};
\node[vertex_white] (W01) at (-2,1) {}; 
\node[vertex_white] (W02) at (-2,4) {};
\node[vertex_white] (W11) at (2,1) {};
\node[vertex_white] (W12) at (2,3) {};
\node[vertex_black] (B2) at (2,4) {};
\node[vertex_white] (W21) at (2,5) {};
\node[vertex_white] (W22) at (2,7) {};
\node[vertex_black] (Bd) at (2,9) {};
\node[vertex_white] (Wd1) at (2,10) {};
\node[vertex_white] (Wd2) at (2,12) {};
\draw{
(B1)--(W01) (B1)--(W11) (W12)--(B2)--(W21) (Bd)--(Wd1)
};
\draw[densely dotted] {
(W01)--(W02) (W11)--(W12) (W21)--(W22)--(Bd) (Wd1)--(Wd2)
};
\draw[decorate,decoration={brace,amplitude=5},xshift=-2pt,yshift=0pt]
(-2,1)--(-2,4) node [midway,xshift=-8pt,yshift=0pt]{\scriptsize $l$};
\draw[decorate,decoration={brace,amplitude=3},xshift=2pt,yshift=0pt]
(2,3)--(2,1) node [midway,xshift=12pt,yshift=0pt]{\scriptsize $m_1$};
\draw[decorate,decoration={brace,amplitude=3},xshift=2pt,yshift=0pt]
(2,7)--(2,5) node [midway,xshift=12pt,yshift=0pt]{\scriptsize $m_2$};
\draw[decorate,decoration={brace,amplitude=3},xshift=2pt,yshift=0pt]
(2,12)--(2,10) node [midway,xshift=12pt,yshift=0pt]{\scriptsize $m_d$};
\end{tikzpicture}
\,.
\]
Then we have 
\begin{equation}
\label{for:P is I of X}
P_l(\bk)=I([X]^{\adm})\,,
\end{equation} 
which is the key ingredient of the proof.
On the other hand, we see that 
\begin{align}
[X]
&=\sum_{a=0}^{m_1}(-1)^a
\left[
\begin{tikzpicture}[thick,x=8pt,y=8pt,baseline=30pt]
\node[vertex_black] (B1) at (0,0) {};
\node[vertex_white] (W01) at (-2,1) {}; 
\node[vertex_white] (W02) at (-2,4) {};
\node[vertex_white] (W11) at (2,1) {};
\node[vertex_white] (W12) at (2,3) {};
\node[vertex_black] (B2) at (11,0) {};
\node[vertex_white] (W1) at (9,1) {};
\node[vertex_white] (W2) at (9,3) {};
\node[vertex_white] (W21) at (13,1) {};
\node[vertex_white] (W22) at (13,3) {};
\node[vertex_black] (Bd) at (13,5) {};
\node[vertex_white] (Wd1) at (13,6) {};
\node[vertex_white] (Wd2) at (13,8) {};
\draw{
(B1)--(W01) (B1)--(W11) (W1)--(B2)--(W21) (Bd)--(Wd1)
};
\draw[densely dotted] {
(W01)--(W02) (W11)--(W12) (W1)--(W2) (W21)--(W22)--(Bd) (Wd1)--(Wd2)
};
\draw[decorate,decoration={brace,amplitude=5},xshift=-2pt,yshift=0pt]
(-2,1)--(-2,4) node [midway,xshift=-8pt,yshift=0pt]{\scriptsize $l$};
\draw[decorate,decoration={brace,amplitude=3},xshift=2pt,yshift=0pt]
(2,3)--(2,1) node [midway,xshift=18pt,yshift=0pt]{\scriptsize $m_1\!-\!a$};
\draw[decorate,decoration={brace,amplitude=5},xshift=-2pt,yshift=0pt]
(9,1)--(9,3) node [midway,xshift=-10pt,yshift=0pt]{\scriptsize $a$};
\draw[decorate,decoration={brace,amplitude=3},xshift=2pt,yshift=0pt]
(13,3)--(13,1) node [midway,xshift=12pt,yshift=0pt]{\scriptsize $m_2$};
\draw[decorate,decoration={brace,amplitude=3},xshift=2pt,yshift=0pt]
(13,8)--(13,6) node [midway,xshift=12pt,yshift=0pt]{\scriptsize $m_d$};
\end{tikzpicture}
\ \right]
+(-1)^{k_1}
\left[
\begin{tikzpicture}[thick,x=8pt,y=8pt,baseline=30pt]
\node[vertex_black] (B2) at (0,0) {};
\node[vertex_white] (W11) at (-2,1) {}; 
\node[vertex_white] (W12) at (-2,3) {};
\node[vertex_black] (B1) at (-2,4) {};
\node[vertex_white] (W01) at (-2,5) {};
\node[vertex_white] (W02) at (-2,8) {};
\node[vertex_white] (W21) at (2,1) {};
\node[vertex_white] (W22) at (2,3) {};
\node[vertex_black] (Bd) at (2,5) {};
\node[vertex_white] (Wd1) at (2,6) {};
\node[vertex_white] (Wd2) at (2,8) {};
\draw{
(B2)--(W11) (W12)--(B1)--(W01) (B2)--(W21) (Bd)--(Wd1)
};
\draw[densely dotted] {
(W11)--(W12) (W01)--(W02) (W21)--(W22)--(Bd) (Wd1)--(Wd2)
};
\draw[decorate,decoration={brace,amplitude=5},xshift=-2pt,yshift=-0pt]
(-2,1)--(-2,3) node [midway,xshift=-11pt,yshift=0pt]{\scriptsize $m_1$};
\draw[decorate,decoration={brace,amplitude=5},xshift=-2pt,yshift=-0pt]
(-2,5)--(-2,8) node [midway,xshift=-8pt,yshift=0pt]{\scriptsize $l$};
\draw[decorate,decoration={brace,amplitude=5},xshift=2pt,yshift=0pt]
(2,3)--(2,1) node [midway,xshift=12pt,yshift=0pt]{\scriptsize $m_2$};
\draw[decorate,decoration={brace,amplitude=5},xshift=2pt,yshift=0pt]
(2,8)--(2,6) node [midway,xshift=12pt,yshift=0pt]{\scriptsize $m_d$};
\end{tikzpicture}
\ \right]\,. 
\end{align}
By repeating similar computations, we have  
\begin{align}
[X]
&=\sum_{i=1}^{d-1}\sum_{a_i=0}^{k_i-1}(-1)^{k_1+\cdots+k_{i-1}+a_i}
[X_{i,a_i}]
+(-1)^{k_1+\cdots+k_{d-1}}[X_{d,0}]\,,
\end{align}
where
\[
X_{i,a}
=
\begin{tikzpicture}[thick,x=8pt,y=8pt,baseline=50pt]
\node[vertex_black] (Bi) at (0,0) {};
\node[vertex_white] (Wi-11) at (-2,1) {};
\node[vertex_white] (Wi-12) at (-2,3) {};
\node[vertex_black] (B2) at (-2,5) {}; 
\node[vertex_white] (W11) at (-2,6) {};
\node[vertex_white] (W12) at (-2,8) {};
\node[vertex_black] (B1) at (-2,9) {};
\node[vertex_white] (W01) at (-2,10) {}; 
\node[vertex_white] (W02) at (-2,13) {};
\node[vertex_white] (Wi1) at (2,1) {};
\node[vertex_white] (Wi2) at (2,3) {}; 
\node[vertex_black] (Bi+1) at (11,0) {};
\node[vertex_white] (W1) at (9,1) {};
\node[vertex_white] (W2) at (9,3) {};
\node[vertex_white] (Wi+11) at (13,1) {};
\node[vertex_white] (Wi+12) at (13,3) {};
\node[vertex_black] (Bd) at (13,5) {};
\node[vertex_white] (Wd1) at (13,6) {};
\node[vertex_white] (Wd2) at (13,8) {};
\draw{
(Bi)--(Wi-11) (B2)--(W11) (W12)--(B1)--(W01) (Bi)--(Wi1) 
(Bi+1)--(W1) (Bi+1)--(Wi+11) (Bd)--(Wd1)
};
\draw[densely dotted] {
(Wi-11)--(Wi-12)--(B2) (W11)--(W12) (W01)--(W02) (Wi1)--(Wi2) 
(W1)--(W2) (Wi+11)--(Wi+12)--(Bd) (Wd1)--(Wd2)
};
\draw[decorate,decoration={brace,amplitude=5},xshift=-2pt,yshift=0pt]
(-2,10)--(-2,13) node [midway,xshift=-10pt,yshift=0pt]{\scriptsize $l$};
\draw[decorate,decoration={brace,amplitude=5},xshift=-2pt,yshift=0pt]
(-2,6)--(-2,8) node [midway,xshift=-12pt,yshift=0pt]{\scriptsize $m_1$};
\draw[decorate,decoration={brace,amplitude=5},xshift=-2pt,yshift=0pt]
(-2,1)--(-2,3) node [midway,xshift=-18pt,yshift=0pt]{\scriptsize $m_{i-1}$};
\draw[decorate,decoration={brace,amplitude=5},xshift=2pt,yshift=0pt]
(2,3)--(2,1) node [midway,xshift=20pt,yshift=0pt]{\scriptsize $m_i\!-\!a$};
\draw[decorate,decoration={brace,amplitude=5},xshift=-2pt,yshift=0pt]
(9,1)--(9,3) node [midway,xshift=-10pt,yshift=0pt]{\scriptsize $a$};
\draw[decorate,decoration={brace,amplitude=3},xshift=2pt,yshift=0pt]
(13,3)--(13,1) node [midway,xshift=16pt,yshift=0pt]{\scriptsize $m_{i+1}$};
\draw[decorate,decoration={brace,amplitude=3},xshift=2pt,yshift=0pt]
(13,8)--(13,6) node [midway,xshift=12pt,yshift=0pt]{\scriptsize $m_d$};
\end{tikzpicture}
\,, \quad
X_{d,0}
=
\begin{tikzpicture}[thick,x=8pt,y=8pt,baseline=50pt]
\node[vertex_black] (Bd) at (0,0) {};
\node[vertex_white] (Wd-11) at (-2,1) {};
\node[vertex_white] (Wd-12) at (-2,3) {};
\node[vertex_black] (B2) at (-2,5) {}; 
\node[vertex_white] (W11) at (-2,6) {};
\node[vertex_white] (W12) at (-2,8) {};
\node[vertex_black] (B1) at (-2,9) {};
\node[vertex_white] (W01) at (-2,10) {}; 
\node[vertex_white] (W02) at (-2,13) {};
\node[vertex_white] (Wd1) at (2,1) {};
\node[vertex_white] (Wd2) at (2,3) {}; 
\draw{
(Bd)--(Wd-11) (B2)--(W11) (W12)--(B1)--(W01) (Bd)--(Wd1) 
};
\draw[densely dotted] {
(Wd-11)--(Wd-12)--(B2) (W11)--(W12) (W01)--(W02) (Wd1)--(Wd2)
};
\draw[decorate,decoration={brace,amplitude=5},xshift=-2pt,yshift=0pt]
(-2,10)--(-2,13) node [midway,xshift=-10pt,yshift=0pt]{\scriptsize $l$};
\draw[decorate,decoration={brace,amplitude=5},xshift=-2pt,yshift=0pt]
(-2,6)--(-2,8) node [midway,xshift=-12pt,yshift=0pt]{\scriptsize $m_1$};
\draw[decorate,decoration={brace,amplitude=5},xshift=-2pt,yshift=0pt]
(-2,1)--(-2,3) node [midway,xshift=-18pt,yshift=0pt]{\scriptsize $m_{d-1}$};
\draw[decorate,decoration={brace,amplitude=5},xshift=2pt,yshift=0pt]
(2,3)--(2,1) node [midway,xshift=12pt,yshift=0pt]{\scriptsize $m_d$};
\end{tikzpicture}
\,,
\]
that is, 
\begin{align*}
X_{i,a}
&=X_{k_i-1-a}(k_{i-1},\ldots,k_1,l+1)\sqcup X_{a}(k_{i+1},\ldots,k_d)\,, \\
X_{d,0}
&=X_{k_d-1}(k_{d-1},\ldots,k_1,l+1)\,.
\end{align*}
Notice that $X_{d,0}$ is always admissible because $l>0$.
By taking the admissible parts and making the integrals associated with these 2-posets, we have 
\begin{equation}
\label{for:Pl}
\begin{aligned}
 P_l(\bk)
&=\sum_{i=1}^{d-1}\sum_{a_i=0}^{k_i-1}(-1)^{k_1+\cdots+k_{i-1}+a_i}
I([X_{i,a_i}]^{\adm})\\
&\quad +(-1)^{k_1+\cdots+k_{d-1}}P_{k_d-1}(k_{d-1},\ldots,k_1,l+1)\,.
\end{aligned}
\end{equation}

If $m_d>0$ (i.e., $\bk$ is admissible),
since $X_{i,a}$ is also admissible, 
the formula \eqref{eq:general weighted sum adm} is immediately obtained from \eqref{for:P is I of X} and \eqref{for:Pl}.

If $m_d=0$ (i.e., $\bk$ is non-admissible),
noticing that $X_{i,a}$ is admissible if and only if 
$i=d-1$ and $a>0$, we have\footnote{For a condition $P$, we let $\mathbbm{1}_{P}$ denote the indicator function on $P$, that is, $\mathbbm{1}_{P}=1$ if $P$ is satisfied and $0$ otherwise. 
We also put $\overline{\mathbbm{1}}_{P}=1-\mathbbm{1}_{P}$.
Condition with multiple lines
stands for the conjunction of all lines.} from \eqref{for:Pl}
\begin{equation}
\begin{aligned}
&P_l(\bk)
=\sum_{i=1}^{d-1}\sum_{a_i=0}^{k_i-1}
\overline{\mathbbm{1}}_{\substack{i=d-1\\ a_i\ne 0}}
(-1)^{k_1+\cdots+k_{i-1}+a_i}
I([X_{i,a_i}]^{\adm})\\
&\quad +\sum^{d}_{i=d-1}\sum_{a_{i}=0}^{k_{i}-1}
\overline{\mathbbm{1}}_{\substack{i=d-1\\ a_i=0}}
(-1)^{k_1+\cdots+k_{i-1}+a_{i}}P_{k_{i}-1-a_i}(k_{i-1},\ldots,k_{1},l+1)P_{a_i}(k_{i+1},\ldots,k_{d-1},1)\,.
\end{aligned}
\end{equation}
Now, we compute $I([X_{i,a}]^{\adm})$. 
Observe that 
\[
[X_{i,a}]^\adm
=[X_{i,a}^{(1)}]+[X_{i,a}^{(2)}]+[X_{i,a}^{(3)}]\,, 
\]
where 
\begin{align*}
X_{i,a}^{(1)}
&=
\begin{tikzpicture}[thick,x=8pt,y=8pt,baseline=50pt]
\node[vertex_black] (Bi) at (0,0) {};
\node[vertex_white] (Wi-11) at (-2,1) {};
\node[vertex_white] (Wi-12) at (-2,3) {};
\node[vertex_black] (B2) at (-2,5) {}; 
\node[vertex_white] (W11) at (-2,6) {};
\node[vertex_white] (W12) at (-2,8) {};
\node[vertex_black] (B1) at (-2,9) {};
\node[vertex_white] (W01) at (-2,10) {}; 
\node[vertex_white] (W02) at (-2,13) {};
\node[vertex_white] (Wi1) at (2,1) {};
\node[vertex_white] (Wi2) at (2,3) {}; 
\node[vertex_black] (Bi+1) at (11,0) {};
\node[vertex_white] (W1) at (9,1) {};
\node[vertex_white] (W2) at (9,3) {};
\node[vertex_white] (Wi+11) at (13,1) {};
\node[vertex_white] (Wi+12) at (13,3) {};
\node[vertex_black] (Bd-1) at (13,5) {};
\node[vertex_white] (Wd-11) at (13,6) {};
\node[vertex_white] (Wd-12) at (13,8) {};
\node[vertex_black] (Bd) at (13,9) {};
\draw{
(Bi)--(Wi-11) (B2)--(W11) (W12)--(B1)--(W01) (Bi)--(Wi1) (Wi2)--(W02)
(Bi+1)--(W1) (Bi+1)--(Wi+11) (Bd-1)--(Wd-11) (Wd-12)--(Bd)--(W2) (Bd)--(W02) 
};
\draw[densely dotted] {
(Wi-11)--(Wi-12)--(B2) (W11)--(W12) (W01)--(W02) (Wi1)--(Wi2) 
(W1)--(W2) (Wi+11)--(Wi+12)--(Bd-1) (Wd-11)--(Wd-12)
};
\draw[decorate,decoration={brace,amplitude=5},xshift=-2pt,yshift=0pt]
(-2,10)--(-2,13) node [midway,xshift=-10pt,yshift=0pt]{\scriptsize $l$};
\draw[decorate,decoration={brace,amplitude=5},xshift=-2pt,yshift=0pt]
(-2,6)--(-2,8) node [midway,xshift=-12pt,yshift=0pt]{\scriptsize $m_1$};
\draw[decorate,decoration={brace,amplitude=5},xshift=-2pt,yshift=0pt]
(-2,1)--(-2,3) node [midway,xshift=-18pt,yshift=0pt]{\scriptsize $m_{i-1}$};
\draw[decorate,decoration={brace,amplitude=5},xshift=2pt,yshift=0pt]
(2,3)--(2,1) node [midway,xshift=20pt,yshift=0pt]{\scriptsize $m_i-a$};
\draw[decorate,decoration={brace,amplitude=5},xshift=-2pt,yshift=0pt]
(9,1)--(9,3) node [midway,xshift=-10pt,yshift=0pt]{\scriptsize $a$};
\draw[decorate,decoration={brace,amplitude=3},xshift=2pt,yshift=0pt]
(13,3)--(13,1) node [midway,xshift=16pt,yshift=0pt]{\scriptsize $m_{i+1}$};
\draw[decorate,decoration={brace,amplitude=3},xshift=2pt,yshift=0pt]
(13,8)--(13,6) node [midway,xshift=16pt,yshift=0pt]{\scriptsize $m_{d-1}$};
\end{tikzpicture}
\,,\\
X_{i,a}^{(2)}
&=
\begin{tikzpicture}[thick,x=8pt,y=8pt,baseline=50pt]
\node[vertex_black] (Bi) at (0,0) {};
\node[vertex_white] (Wi-11) at (-2,1) {};
\node[vertex_white] (Wi-12) at (-2,3) {};
\node[vertex_black] (B2) at (-2,5) {}; 
\node[vertex_white] (W11) at (-2,6) {};
\node[vertex_white] (W12) at (-2,8) {};
\node[vertex_black] (B1) at (-2,9) {};
\node[vertex_white] (W01) at (-2,10) {}; 
\node[vertex_white] (W02) at (-2,13) {};
\node[vertex_white] (Wi1) at (2,1) {};
\node[vertex_white] (Wi2) at (2,14) {}; 
\node[vertex_black] (Bi+1) at (11,0) {};
\node[vertex_white] (W1) at (9,1) {};
\node[vertex_white] (W2) at (9,3) {};
\node[vertex_white] (Wi+11) at (13,1) {};
\node[vertex_white] (Wi+12) at (13,3) {};
\node[vertex_black] (Bd-1) at (13,5) {};
\node[vertex_white] (Wd-11) at (13,6) {};
\node[vertex_white] (Wd-12) at (13,8) {};
\node[vertex_black] (Bd) at (13,9) {};
\draw{
(Bi)--(Wi-11) (B2)--(W11) (W12)--(B1)--(W01) (Bi)--(Wi1) (Wi2)--(W02)
(Bi+1)--(W1) (Bi+1)--(Wi+11) (Bd-1)--(Wd-11) (Wd-12)--(Bd)--(W2) (Bd)--(Wi2) 
};
\draw[densely dotted] {
(Wi-11)--(Wi-12)--(B2) (W11)--(W12) (W01)--(W02) (Wi1)--(Wi2) 
(W1)--(W2) (Wi+11)--(Wi+12)--(Bd-1) (Wd-11)--(Wd-12)
};
\draw[decorate,decoration={brace,amplitude=5},xshift=-2pt,yshift=0pt]
(-2,10)--(-2,13) node [midway,xshift=-10pt,yshift=0pt]{\scriptsize $l$};
\draw[decorate,decoration={brace,amplitude=5},xshift=-2pt,yshift=0pt]
(-2,6)--(-2,8) node [midway,xshift=-12pt,yshift=0pt]{\scriptsize $m_1$};
\draw[decorate,decoration={brace,amplitude=5},xshift=-2pt,yshift=0pt]
(-2,1)--(-2,3) node [midway,xshift=-18pt,yshift=0pt]{\scriptsize $m_{i-1}$};
\draw[decorate,decoration={brace,amplitude=5},xshift=2pt,yshift=0pt]
(2,14)--(2,1) node [midway,xshift=20pt,yshift=0pt]{\scriptsize $m_i-a$};
\draw[decorate,decoration={brace,amplitude=5},xshift=-2pt,yshift=0pt]
(9,1)--(9,3) node [midway,xshift=-10pt,yshift=0pt]{\scriptsize $a$};
\draw[decorate,decoration={brace,amplitude=3},xshift=2pt,yshift=0pt]
(13,3)--(13,1) node [midway,xshift=16pt,yshift=0pt]{\scriptsize $m_{i+1}$};
\draw[decorate,decoration={brace,amplitude=3},xshift=2pt,yshift=0pt]
(13,8)--(13,6) node [midway,xshift=16pt,yshift=0pt]{\scriptsize $m_{d-1}$};
\end{tikzpicture}
\,,
\intertext{and}
X_{i,a}^{(3)}
&=
\begin{tikzpicture}[thick,x=8pt,y=8pt,baseline=50pt]
\node[vertex_black] (Bi) at (0,0) {};
\node[vertex_white] (Wi-11) at (-2,1) {};
\node[vertex_white] (Wi-12) at (-2,3) {};
\node[vertex_black] (B2) at (-2,5) {}; 
\node[vertex_white] (W11) at (-2,6) {};
\node[vertex_white] (W12) at (-2,8) {};
\node[vertex_black] (B1) at (-2,9) {};
\node[vertex_white] (W01) at (-2,10) {}; 
\node[vertex_white] (W02) at (-2,13) {};
\node[vertex_white] (Wi1) at (2,1) {};
\node[vertex_white] (Wi2) at (2,3) {}; 
\node[vertex_black] (Bi+1) at (11,0) {};
\node[vertex_white] (W1) at (9,1) {};
\node[vertex_white] (W2) at (9,11) {};
\node[vertex_white] (Wi+11) at (13,1) {};
\node[vertex_white] (Wi+12) at (13,3) {};
\node[vertex_black] (Bd-1) at (13,5) {};
\node[vertex_white] (Wd-11) at (13,6) {};
\node[vertex_white] (Wd-12) at (13,8) {};
\node[vertex_black] (Bd) at (13,9) {};
\draw{
(Bi)--(Wi-11) (B2)--(W11) (W12)--(B1)--(W01) (Bi)--(Wi1) 
(Bi+1)--(W1) (Bi+1)--(Wi+11) (Bd-1)--(Wd-11) (Wd-12)--(Bd)--(W2)
};
\draw[densely dotted] {
(Wi-11)--(Wi-12)--(B2) (W11)--(W12) (W01)--(W02) (Wi1)--(Wi2) 
(W1)--(W2) (Wi+11)--(Wi+12)--(Bd-1) (Wd-11)--(Wd-12)
};
\draw[decorate,decoration={brace,amplitude=5},xshift=-2pt,yshift=0pt]
(-2,10)--(-2,13) node [midway,xshift=-10pt,yshift=0pt]{\scriptsize $l$};
\draw[decorate,decoration={brace,amplitude=5},xshift=-2pt,yshift=0pt]
(-2,6)--(-2,8) node [midway,xshift=-12pt,yshift=0pt]{\scriptsize $m_1$};
\draw[decorate,decoration={brace,amplitude=5},xshift=-2pt,yshift=0pt]
(-2,1)--(-2,3) node [midway,xshift=-18pt,yshift=0pt]{\scriptsize $m_{i-1}$};
\draw[decorate,decoration={brace,amplitude=5},xshift=2pt,yshift=0pt]
(2,3)--(2,1) node [midway,xshift=20pt,yshift=0pt]{\scriptsize $m_i-a$};
\draw[decorate,decoration={brace,amplitude=5},xshift=-2pt,yshift=0pt]
(9,1)--(9,11) node [midway,xshift=-10pt,yshift=0pt]{\scriptsize $a$};
\draw[decorate,decoration={brace,amplitude=3},xshift=2pt,yshift=0pt]
(13,3)--(13,1) node [midway,xshift=16pt,yshift=0pt]{\scriptsize $m_{i+1}$};
\draw[decorate,decoration={brace,amplitude=3},xshift=2pt,yshift=0pt]
(13,8)--(13,6) node [midway,xshift=16pt,yshift=0pt]{\scriptsize $m_{d-1}$};
\end{tikzpicture}
\,. 
\end{align*}
Here we understand that $X_{i,m_i}^{(2)}=X_{i,0}^{(3)}=0$. 

It is easy from \eqref{for:P is I of X} again to see that 
\[
 I([X_{i,a}^{(3)}])
=P_{k_i-1-a}(k_{i-1},\ldots,k_1,l+1)\,P_a(k_{i+1},\ldots,k_{d-1},1)
\]
 and hence  
\begin{equation}
\label{for:Plnonad2}
\begin{aligned}
& P_l(\bk)
=\sum_{i=1}^{d-1}\sum_{a_i=0}^{k_i-1}\overline{\mathbbm{1}}_{\substack{i=d-1\\ a_i\ne 0}}(-1)^{k_1+\cdots+k_{i-1}+a_i}
I([X^{(1)}_{i,a_i}]+[X^{(2)}_{i,a_i}])\\
&\quad +\sum^{d}_{i=1}\sum_{a_{i}=0}^{k_{i}-1}
(-1)^{k_1+\cdots+k_{i-1}+a_{i}}P_{k_{i}-1-a_i}(k_{i-1},\ldots,k_{1},l+1)P_{a_i}(k_{i+1},\ldots,k_{d-1},1)\,.
\end{aligned}
\end{equation}
Moreover, we see that $[X_{i,a}^{(1)}]+[X_{i,a}^{(2)}]$ can be written in terms of 2-posets
\begin{equation}
\label{def:Y}
Y_i(p_0,\ldots,\check{p}_i,\ldots,p_{d-1})
=
\begin{tikzpicture}[thick,x=8pt,y=8pt,baseline=44pt]
\node[vertex_black] (Bi) at (0,0) {};
\node[vertex_white] (Wi-11) at (0,1) {};
\node[vertex_white] (Wi-12) at (0,3) {};
\node[vertex_black] (B2) at (0,5) {}; 
\node[vertex_white] (W11) at (0,6) {};
\node[vertex_white] (W12) at (0,8) {};
\node[vertex_black] (B1) at (0,9) {};
\node[vertex_white] (W01) at (0,10) {}; 
\node[vertex_white] (W02) at (0,12) {};
\node[vertex_black] (Bi+1) at (5,0) {};
\node[vertex_white] (Wi+11) at (5,1) {}; 
\node[vertex_white] (Wi+12) at (5,3) {};
\node[vertex_black] (Bd-1) at (5,5) {}; 
\node[vertex_white] (Wd-11) at (5,6) {};
\node[vertex_white] (Wd-12) at (5,8) {};
\node[vertex_black] (Bd) at (5,9) {};
\draw{
(Bi)--(Wi-11) (B2)--(W11) (W12)--(B1)--(W01) 
(Bi+1)--(Wi+11) (Bd-1)--(Wd-11) (Wd-12)--(Bd)--(W02)
};
\draw[densely dotted] {
(Wi-11)--(Wi-12)--(B2) (W11)--(W12) (W01)--(W02) 
(Wi+11)--(Wi+12)--(Bd-1) (Wd-11)--(Wd-12) 
};
\draw[decorate,decoration={brace,amplitude=3},xshift=-2pt,yshift=0pt]
(0,1)--(0,3) node [midway,xshift=-14pt,yshift=0pt]{\scriptsize $p_{i-1}$};
\draw[decorate,decoration={brace,amplitude=3},xshift=-2pt,yshift=0pt]
(0,6)--(0,8) node [midway,xshift=-10pt,yshift=0pt]{\scriptsize $p_1$};
\draw[decorate,decoration={brace,amplitude=3},xshift=-2pt,yshift=0pt]
(0,10)--(0,12) node [midway,xshift=-10pt,yshift=0pt]{\scriptsize $p_0$};
\draw[decorate,decoration={brace,amplitude=3},xshift=-2pt,yshift=0pt]
(5,1)--(5,3) node [midway,xshift=-14pt,yshift=0pt]{\scriptsize $p_{i+1}$};
\draw[decorate,decoration={brace,amplitude=3},xshift=-2pt,yshift=0pt]
(5,6)--(5,8) node [midway,xshift=-14pt,yshift=0pt]{\scriptsize $p_{d-1}$};
\end{tikzpicture}
\end{equation}
for various values $p_0,,\ldots,\check{p}_i,\ldots,p_{d-1}$ 
($\check{p}_i$ means that $p_i$ is skipped). 
Actually, setting $m_0=l$, we have   
\begin{align}
 [X^{(1)}_{i,a}]
&=\sum_{\substack{\bb_i=(b_0,\ldots,\check{b}_{i},\ldots,b_{d-1})\in\mathbb{Z}^{d-1}_{\ge 0} \\
\wt(b_0,\ldots,b_{i-1})=m_i-a_i \\ 
\wt(b_{i+1},\ldots,b_{d-1})=a_i}}
\binom{m_0+b_0-1}{m_0-1}\prod^{d-1}_{\substack{j=1 \\ j\ne i}}\binom{m_j+b_j}{m_j} [Y_i(\bb_i+\bmm_i)]\,,\\
 [X^{(2)}_{i,a}]
&=\sum_{\substack{\bb_i=(b_0,\ldots,\check{b}_{i},\ldots,b_{d-1})\in\mathbb{Z}^{d-1}_{\ge 0} \\ \wt(b_0,\ldots,b_{i-1})=m_i-a_i \\ 
\wt(b_{i+1},\ldots,b_{d-1})=a_i}}
\binom{m_0+b_0-1}{m_0}\prod^{d-1}_{\substack{j=1 \\ j\ne i}}\binom{m_j+b_j}{m_j} [Y_i(\bb_i+\bmm_i)]\,,
\end{align}
where $\bmm_i=(m_0,\ldots,\check{m}_i,\ldots,m_{d-1})$.
Hence, using the identity
$\binom{s-1}{t-1}+\binom{s-1}{t}=\binom{s}{t}$ for $s,t\ge 0$,
we have 
\begin{equation}
\label{for:X1X2}    
\begin{aligned}
[X^{(1)}_{i,a}]+[X^{(2)}_{i,a}]
&=\sum_{\substack{\bb_i=(b_0,\ldots,\check{b}_{i},\ldots,b_{d-1})\in\mathbb{Z}^{d-1}_{\ge 0} \\ \wt(b_0,\ldots,b_{i-1})=m_i-a_i \\ 
\wt(b_{i+1},\ldots,b_{d-1})=a_i}}
\prod^{d-1}_{\substack{j=0 \\ j\ne i}}\binom{m_j+b_j}{m_j} [Y_i(\bb_i+\bmm_i)]\,.
\end{aligned} 
\end{equation}
Substituting this into \eqref{for:Plnonad2} and changing the order of summations, we see that  
\begin{align*}
& P_l(\bk)
=\sum_{i=1}^{d-1}
(-1)^{l+k_i}
\sum_{\substack{\bb_i=(b_0,\ldots,\check{b}_{i},\ldots,b_{d-1})\in\mathbb{Z}^{d-1}_{\ge 1}\\
\wt(\bb_i)=\wt(\bk)+l-1}}
(-1)^{b_0+\cdots+b_{i-1}}
\prod^{d-1}_{\substack{j=0 \\ j\ne i}}\binom{b_j-1}{m_j} 
 I[Y_i(\bb_i-\{1\}^{d-1})]\\
&\quad +\sum^{d}_{i=1}\sum_{a_{i}=0}^{k_{i}-1}
(-1)^{k_1+\cdots+k_{i-1}+a_{i}}P_{k_{i}-1-a_i}(k_{i-1},\ldots,k_{1},l+1)P_{a_i}(k_{i+1},\ldots,k_{d-1},1)\,.
\end{align*}
Therefore, one obtains \eqref{eq:general weighted sum nonadm} by employing the expression of $I([Y_i(\bb_i-\{1\}^{d-1})])$ given in Lemma \ref{lem:I of Y} below.
This completes the proof.
\end{proof}

\begin{lemma}
\label{lem:I of Y}
For $1\le i\le d-1$ and $\bp=(p_0,\ldots,\check{p}_i,\ldots,p_{d-1})\in\Z_{\ge 0}^{d-1}$ 
with $p_0>0$, we have   
\begin{equation}
\begin{aligned}
I([Y_i(\bp)])
&=\sum_{j=i}^{d-1}\sum_{c_j=0}^{p_j-1}
(-1)^{c_j+p_{j+1}+\cdots+p_{d-1}}
\zeta\begin{varray}
c_j+1,p_{j+1}+1,\cdots,p_{d-1}+1\\
p_{i-1}+1,\cdots,p_1+1,p_0+1
\end{varray} \\
&\quad \times \zeta(p_{i+1}+1,\ldots,p_{j-1}+1,p_{j}-c_{j}+1)\,,
\end{aligned} 
\end{equation}
where we set $p_i=1$. 
\end{lemma}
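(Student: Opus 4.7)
The plan is to evaluate $I([Y_i(\bp)])$ by writing $[Y_i(\bp)]$ in $\Q\sX/\Ker([X]\mapsto [X]^\tot)$ as a signed sum of disjoint unions of an antihook 2-poset with a linear chain, and then applying the integral map $I$ together with \cref{thm:integral series identity}. The poset $Y_i(\bp)$ consists of a left chain $L$ running from $B_i$ up to $W_{02}$ (with block weights $p_{i-1}+1,\ldots,p_0+1$) and a right chain $R$ running from $B_{i+1}$ up to $B_d$ (with block weights $p_{i+1}+1,\ldots,p_{d-1}+1,1$), joined only by the single covering relation $B_d<W_{02}$. Since $W_{02}$ is the unique maximum and both $B_i$ and $B_{i+1}$ are admissible minima, the totally ordered expansion of $[Y_i(\bp)]$ is the sum over all interleavings of $L$ and $R$ (with $B_d$ forced below $W_{02}$), each an admissible chain whose integral is a standard MZV.

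The key step is to group these interleavings by a cut parameter $(j,c_j)$ with $j\in\{i,\ldots,d-1\}$ and $0\le c_j\le p_j-1$: the parameter $j$ selects the block of $R$ across which the cut passes, while $c_j$ records how many whites of block $j$ rise above the topmost left-branch black $B_1$ to mingle with the top-$p_0$ white chain on $L$. Elements below the cut (blocks $i+1,\ldots,j-1$ of $R$ together with $B_j$ and $p_j-c_j$ whites of block $j$) form a stand-alone admissible chain whose integral is the MZV $\zeta(p_{i+1}+1,\ldots,p_{j-1}+1,p_j-c_j+1)$; elements above the cut (the remaining $c_j$ whites of block $j$, all of blocks $j+1,\ldots,d-1$, and the black $B_d$), after reversing orientation so as to ascend from the corner $W_{02}$, combine with $L$ into an antihook 2-poset whose integral, by \cref{thm:integral series identity}, is $\zeta\begin{varray}c_j+1,p_{j+1}+1,\ldots,p_{d-1}+1\\p_{i-1}+1,\ldots,p_0+1\end{varray}$.

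The sign $(-1)^{c_j+p_{j+1}+\cdots+p_{d-1}}$ appears because each of the $c_j+p_{j+1}+\cdots+p_{d-1}$ whites sitting in the ``reversed'' upper part contributes a factor $-1$ when a telescoping identity (iterated applications of the totally ordered expansion $[X]=[X_a^b]+[X_b^a]$) is used to flip the downward orientation of $R$'s upper part into the upward orientation required by \cref{thm:integral series identity}. The main obstacle will be verifying that every admissible interleaving of $L$ and $R$ is accounted for exactly once, with the correct multiplicity and sign, across all cuts $(j,c_j)$; particular care is needed at the boundary case $j=i$, where the convention $p_i=1$ introduces a ``virtual'' block mediating between the column of the antihook and the top of $L$, and where the accompanying MZV factor collapses to $\zeta(2)$.
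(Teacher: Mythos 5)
Your overall strategy---rewrite $[Y_i(\bp)]$ in $\Q\sX/\Ker([X]\mapsto[X]^\tot)$ as a signed sum of terms (antihook $2$-poset)$\,\sqcup\,$(chain), then apply the multiplicativity of $I$ on disjoint unions together with Theorem~\ref{thm:integral series identity}, with one factor $-1$ per white transferred into the branch---is exactly the paper's proof, which carries out this telescoping block by block down the right chain $R$. But two of your concrete claims are wrong as stated, not merely delicate. First, the framing ``group these interleavings by a cut parameter \dots every admissible interleaving accounted for exactly once'' cannot be taken literally: the totally ordered expansion of $[Y_i(\bp)]$ is a sum of chains with nonnegative coefficients, whereas the right-hand side, once the antihooks and the products are themselves expanded into chains, involves genuine cancellation. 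The correct mechanism is inclusion--exclusion: if $u$ is the top white of the current block and $Z$ is the poset with the covering relation $u<B_d$ deleted, then $[Y_i(\bp)]=[Z]-[Z_{B_d}^{u}]$, the first term detaching the lower part of $R$ as a disjoint chain and the second pushing $u$ above $B_d$ into the branch hanging off $W_{02}$. Note also that this branch is \emph{incomparable} to the $p_0$ whites of $L$ (that is what makes the resulting poset the one in Theorem~\ref{thm:integral series identity}); your description of the $c_j$ whites as rising ``above $B_1$ to mingle with the top-$p_0$ white chain'' would produce a different $2$-poset with a different integral.

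Second, the boundary term $j=i$ is the case in which the entire right chain has been absorbed into the branch, so the accompanying factor is $\zeta(\varnothing)=1$, not $\zeta(2)$. A weight count confirms this: the poset $Y_i(\bp)$ has $d+\sum_{a\ne i}p_a$ elements, and the antihook
$\zeta\begin{varray}1,p_{i+1}+1,\ldots,p_{d-1}+1\\ p_{i-1}+1,\ldots,p_0+1\end{varray}$
already has weight $d+\sum_{a\ne i}p_a$, leaving no room for an extra factor of weight $2$. With these two corrections your argument becomes the one given in the paper.
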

\begin{proof}
When $i=d-1$, the formula reads as 
\[I\left(
\begin{tikzpicture}[thick,x=8pt,y=8pt,baseline=44pt]
\node[vertex_black] (Bi) at (0,0) {};
\node[vertex_white] (Wi-11) at (0,1) {};
\node[vertex_white] (Wi-12) at (0,3) {};
\node[vertex_black] (B2) at (0,5) {}; 
\node[vertex_white] (W11) at (0,6) {};
\node[vertex_white] (W12) at (0,8) {};
\node[vertex_black] (B1) at (0,9) {};
\node[vertex_white] (W01) at (0,10) {}; 
\node[vertex_white] (W02) at (0,12) {};
\node[vertex_black] (Bd) at (3,9) {};
\draw{
(Bi)--(Wi-11) (B2)--(W11) (W12)--(B1)--(W01) 
(Bd)--(W02)
};
\draw[densely dotted] {
(Wi-11)--(Wi-12)--(B2) (W11)--(W12) (W01)--(W02) 
};
\draw[decorate,decoration={brace,amplitude=3},xshift=-2pt,yshift=0pt]
(0,1)--(0,3) node [midway,xshift=-14pt,yshift=0pt]{\scriptsize $p_{d-2}$};
\draw[decorate,decoration={brace,amplitude=3},xshift=-2pt,yshift=0pt]
(0,6)--(0,8) node [midway,xshift=-10pt,yshift=0pt]{\scriptsize $p_1$};
\draw[decorate,decoration={brace,amplitude=3},xshift=-2pt,yshift=0pt]
(0,10)--(0,12) node [midway,xshift=-10pt,yshift=0pt]{\scriptsize $p_0$};
\end{tikzpicture}
\ \right)
=\zeta\begin{varray}
1 \\
p_{d-2}+1,\ldots,p_1+1,p_0+1
\end{varray}, 
\]
which is a special case of Theorem \ref{thm:integral series identity}. 
For $i<d-1$, we compute as  
\begin{align}
[Y_i(\bp)]
&=\sum_{c_{d-1}=0}^{p_{d-1}-1}(-1)^{c_{d-1}}
\left[
\begin{tikzpicture}[thick,x=8pt,y=8pt,baseline=44pt]
\node[vertex_black] (Bi) at (0,0) {};
\node[vertex_white] (Wi-11) at (0,1) {};
\node[vertex_white] (Wi-12) at (0,3) {};
\node[vertex_black] (B2) at (0,5) {}; 
\node[vertex_white] (W11) at (0,6) {};
\node[vertex_white] (W12) at (0,8) {};
\node[vertex_black] (B1) at (0,9) {};
\node[vertex_white] (W01) at (0,10) {}; 
\node[vertex_white] (W02) at (0,12) {};
\node[vertex_black] (Bd) at (2,9) {};
\node[vertex_white] (Wd1) at (3,10) {}; 
\node[vertex_white] (Wd2) at (5,12) {}; 
\node[vertex_black] (Bi+1) at (8,0) {};
\node[vertex_white] (Wi+11) at (8,1) {}; 
\node[vertex_white] (Wi+12) at (8,3) {};
\node[vertex_black] (Bd-1) at (8,5) {}; 
\node[vertex_white] (Wd-11) at (8,6) {};
\node[vertex_white] (Wd-12) at (8,8) {};
\draw{
(Bi)--(Wi-11) (B2)--(W11) (W12)--(B1)--(W01) (W02)--(Bd)--(Wd1)
(Bi+1)--(Wi+11) (Bd-1)--(Wd-11) 
};
\draw[densely dotted] {
(Wi-11)--(Wi-12)--(B2) (W11)--(W12) (W01)--(W02) (Wd1)--(Wd2)
(Wi+11)--(Wi+12)--(Bd-1) (Wd-11)--(Wd-12) 
};
\draw[decorate,decoration={brace,amplitude=3},xshift=-2pt,yshift=0pt]
(0,1)--(0,3) node [midway,xshift=-14pt,yshift=0pt]{\scriptsize $p_{i-1}$};
\draw[decorate,decoration={brace,amplitude=3},xshift=-2pt,yshift=0pt]
(0,6)--(0,8) node [midway,xshift=-10pt,yshift=0pt]{\scriptsize $p_1$};
\draw[decorate,decoration={brace,amplitude=3},xshift=-2pt,yshift=0pt]
(0,10)--(0,12) node [midway,xshift=-10pt,yshift=0pt]{\scriptsize $p_0$};
\draw[decorate,decoration={brace,amplitude=3},xshift=-2pt,yshift=2pt]
(3,10)--(5,12) node [midway,xshift=-14pt,yshift=4pt]{\scriptsize $c_{d-1}$};
\draw[decorate,decoration={brace,amplitude=3},xshift=-2pt,yshift=0pt]
(8,1)--(8,3) node [midway,xshift=-14pt,yshift=0pt]{\scriptsize $p_{i+1}$};
\draw[decorate,decoration={brace,amplitude=3},xshift=-2pt,yshift=0pt]
(8,6)--(8,8) node [midway,xshift=-28pt,yshift=0pt]{\scriptsize $p_{d-1}-c_{d-1}$};
\end{tikzpicture}
\ \right]
+(-1)^{p_{d-1}}
\left[
\begin{tikzpicture}[thick,x=8pt,y=8pt,baseline=44pt]
\node[vertex_black] (Bi) at (0,0) {};
\node[vertex_white] (Wi-11) at (0,1) {};
\node[vertex_white] (Wi-12) at (0,3) {};
\node[vertex_black] (B2) at (0,5) {}; 
\node[vertex_white] (W11) at (0,6) {};
\node[vertex_white] (W12) at (0,8) {};
\node[vertex_black] (B1) at (0,9) {};
\node[vertex_white] (W01) at (0,10) {}; 
\node[vertex_white] (W02) at (0,12) {};
\node[vertex_black] (Bd) at (2,9) {};
\node[vertex_white] (Wd1) at (3,10) {}; 
\node[vertex_white] (Wd2) at (5,12) {}; 
\node[vertex_black] (Bi+1) at (7,0) {};
\node[vertex_white] (Wi+11) at (7,1) {}; 
\node[vertex_white] (Wi+12) at (7,3) {};
\node[vertex_black] (Bd-2) at (7,5) {}; 
\node[vertex_white] (Wd-21) at (7,6) {}; 
\node[vertex_white] (Wd-22) at (7,8) {};
\node[vertex_black] (Bd-1) at (7,9) {}; 
\draw{
(Bi)--(Wi-11) (B2)--(W11) (W12)--(B1)--(W01) (W02)--(Bd)--(Wd1)
(Bi+1)--(Wi+11) (Bd-2)--(Wd-21) (Wd-22)--(Bd-1)--(Wd2) 
};
\draw[densely dotted] {
(Wi-11)--(Wi-12)--(B2) (W11)--(W12) (W01)--(W02) (Wd1)--(Wd2)
(Wi+11)--(Wi+12)--(Bd-1) 
};
\draw[decorate,decoration={brace,amplitude=3},xshift=-2pt,yshift=0pt]
(0,1)--(0,3) node [midway,xshift=-14pt,yshift=0pt]{\scriptsize $p_{i-1}$};
\draw[decorate,decoration={brace,amplitude=3},xshift=-2pt,yshift=0pt]
(0,6)--(0,8) node [midway,xshift=-10pt,yshift=0pt]{\scriptsize $p_1$};
\draw[decorate,decoration={brace,amplitude=3},xshift=-2pt,yshift=0pt]
(0,10)--(0,12) node [midway,xshift=-10pt,yshift=0pt]{\scriptsize $p_0$};
\draw[decorate,decoration={brace,amplitude=3},xshift=-2pt,yshift=2pt]
(3,10)--(5,12) node [midway,xshift=-8pt,yshift=6pt]{\scriptsize $p_{d-1}$};
\draw[decorate,decoration={brace,amplitude=3},xshift=-2pt,yshift=0pt]
(7,1)--(7,3) node [midway,xshift=-14pt,yshift=0pt]{\scriptsize $p_{i+1}$};
\draw[decorate,decoration={brace,amplitude=3},xshift=-2pt,yshift=0pt]
(7,6)--(7,8) node [midway,xshift=-14pt,yshift=0pt]{\scriptsize $p_{d-2}$};
\end{tikzpicture}
\ \right] \\
&=\cdots\\
&=\sum_{j=i+1}^{d-1}\sum_{c_j=0}^{p_j-1}(-1)^{c_j+p_{j+1}+\cdots +p_{d-1}}
\left[
\begin{tikzpicture}[thick,x=8pt,y=8pt,baseline=44pt]
\node[vertex_black] (Bi) at (0,0) {};
\node[vertex_white] (Wi-11) at (0,1) {};
\node[vertex_white] (Wi-12) at (0,3) {};
\node[vertex_black] (B2) at (0,5) {}; 
\node[vertex_white] (W11) at (0,6) {};
\node[vertex_white] (W12) at (0,8) {};
\node[vertex_black] (B1) at (0,9) {};
\node[vertex_white] (W01) at (0,10) {}; 
\node[vertex_white] (W02) at (0,12) {};
\node[vertex_black] (Bd) at (2,9) {};
\node[vertex_white] (Wd1) at (3,10) {}; 
\node[vertex_white] (Wd2) at (5,12) {}; 
\node[vertex_black] (Bj+2) at (9,9) {};
\node[vertex_white] (Wj+21) at (10,10) {}; 
\node[vertex_white] (Wj+22) at (12,12) {}; 
\node[vertex_black] (Bj+1) at (14,9) {};
\node[vertex_white] (Wj+11) at (15,10) {}; 
\node[vertex_white] (Wj+12) at (17,12) {}; 
\node[vertex_black] (Bi+1) at (20,0) {};
\node[vertex_white] (Wi+11) at (20,1) {}; 
\node[vertex_white] (Wi+12) at (20,3) {};
\node[vertex_black] (Bj) at (20,5) {}; 
\node[vertex_white] (Wj1) at (20,6) {};
\node[vertex_white] (Wj2) at (20,8) {};
\draw{
(Bi)--(Wi-11) (B2)--(W11) (W12)--(B1)--(W01) (W02)--(Bd)--(Wd1) 
(Wd2)--(6,10.5) (8,10.5)--(Bj+2)--(Wj+21) (Wj+22)--(Bj+1)--(Wj+11) 
(Bi+1)--(Wi+11) (Bj)--(Wj1) 
};
\draw[densely dotted] {
(Wi-11)--(Wi-12)--(B2) (W11)--(W12) (W01)--(W02) (Wd1)--(Wd2) 
(6.3,10.5)--(7.7,10.5) (Wj+21)--(Wj+22) (Wj+11)--(Wj+12) 
(Wi+11)--(Wi+12)--(Bj) (Wj1)--(Wj2)
};
\draw[decorate,decoration={brace,amplitude=3},xshift=-2pt,yshift=0pt]
(0,1)--(0,3) node [midway,xshift=-14pt,yshift=0pt]{\scriptsize $p_{i-1}$};
\draw[decorate,decoration={brace,amplitude=3},xshift=-2pt,yshift=0pt]
(0,6)--(0,8) node [midway,xshift=-10pt,yshift=0pt]{\scriptsize $p_1$};
\draw[decorate,decoration={brace,amplitude=3},xshift=-2pt,yshift=0pt]
(0,10)--(0,12) node [midway,xshift=-10pt,yshift=0pt]{\scriptsize $p_0$};
\draw[decorate,decoration={brace,amplitude=3},xshift=-2pt,yshift=2pt]
(3,10)--(5,12) node [midway,xshift=-12pt,yshift=4pt]{\scriptsize $p_{d-1}$};
\draw[decorate,decoration={brace,amplitude=3},xshift=-2pt,yshift=2pt]
(10,10)--(12,12) node [midway,xshift=-12pt,yshift=4pt]{\scriptsize $p_{j+1}$};
\draw[decorate,decoration={brace,amplitude=3},xshift=-2pt,yshift=2pt]
(15,10)--(17,12) node [midway,xshift=-8pt,yshift=4pt]{\scriptsize $c_j$};
\draw[decorate,decoration={brace,amplitude=3},xshift=-2pt,yshift=0pt]
(20,1)--(20,3) node [midway,xshift=-14pt,yshift=0pt]{\scriptsize $p_{i+1}$};
\draw[decorate,decoration={brace,amplitude=3},xshift=-2pt,yshift=0pt]
(20,6)--(20,8) node [midway,xshift=-18pt,yshift=0pt]{\scriptsize $p_j-c_j$};
\end{tikzpicture}
\ \right]\\
&\qquad +(-1)^{p_{i+1}+\cdots+p_{d-1}}
\left[
\begin{tikzpicture}[thick,x=8pt,y=8pt,baseline=44pt]
\node[vertex_black] (Bi) at (0,0) {};
\node[vertex_white] (Wi-11) at (0,1) {};
\node[vertex_white] (Wi-12) at (0,3) {};
\node[vertex_black] (B2) at (0,5) {}; 
\node[vertex_white] (W11) at (0,6) {};
\node[vertex_white] (W12) at (0,8) {};
\node[vertex_black] (B1) at (0,9) {};
\node[vertex_white] (W01) at (0,10) {}; 
\node[vertex_white] (W02) at (0,12) {};
\node[vertex_black] (Bd) at (2,9) {};
\node[vertex_white] (Wd1) at (3,10) {}; 
\node[vertex_white] (Wd2) at (5,12) {}; 
\node[vertex_black] (Bi+2) at (9,9) {};
\node[vertex_white] (Wi+21) at (10,10) {}; 
\node[vertex_white] (Wi+22) at (12,12) {}; 
\node[vertex_black] (Bi+1) at (14,9) {};
\draw{
(Bi)--(Wi-11) (B2)--(W11) (W12)--(B1)--(W01) (W02)--(Bd)--(Wd1) 
(Wd2)--(6,10.5) (8,10.5)--(Bi+2)--(Wi+21) (Wi+22)--(Bi+1)
};
\draw[densely dotted] {
(Wi-11)--(Wi-12)--(B2) (W11)--(W12) (W01)--(W02) (Wd1)--(Wd2) 
(6.3,10.5)--(7.7,10.5) (Wi+21)--(Wi+22) 
};
\draw[decorate,decoration={brace,amplitude=3},xshift=-2pt,yshift=0pt]
(0,1)--(0,3) node [midway,xshift=-14pt,yshift=0pt]{\scriptsize $p_{i-1}$};
\draw[decorate,decoration={brace,amplitude=3},xshift=-2pt,yshift=0pt]
(0,6)--(0,8) node [midway,xshift=-10pt,yshift=0pt]{\scriptsize $p_1$};
\draw[decorate,decoration={brace,amplitude=3},xshift=-2pt,yshift=0pt]
(0,10)--(0,12) node [midway,xshift=-10pt,yshift=0pt]{\scriptsize $p_0$};
\draw[decorate,decoration={brace,amplitude=3},xshift=-2pt,yshift=2pt]
(3,10)--(5,12) node [midway,xshift=-12pt,yshift=4pt]{\scriptsize $p_{d-1}$};
\draw[decorate,decoration={brace,amplitude=3},xshift=-2pt,yshift=2pt]
(10,10)--(12,12) node [midway,xshift=-12pt,yshift=4pt]{\scriptsize $p_{i+1}$};
\end{tikzpicture}
\ \right]\,. 
\end{align}
This together with Theorem~\ref{thm:integral series identity} shows the desired result. 
\end{proof}

In the following we give explicit expressions of $P_l(\bk)$ for the cases $d=2$ and $d=3$ which are valid whether $\bk$ is admissible or not.
 
\begin{cor}\label{cor:pldep2}
For $k_1,k_2\ge 1$ and $l>0$, we have 
\begin{align}
\label{eq:d2 weighted sum any index}
&P_{l}(k_1,k_2)
=
(-1)^{k_2}
\sum_{\substack{w_1,w_2\ge2\\w_1+w_2=k_1+k_2+l}}
(-1)^{w_1}
\binom{w_1-1}{k_2-1}\binom{w_2-1}{l}\zeta(w_1)\zeta(w_2)\\
&\hspace{7mm}
+(-1)^{k_1}
\sum_{\substack{w_1\ge 1, w_2\ge2\\ w_1+w_2=k_1+k_2+l}}
\binom{w_1-1}{k_1-1}\binom{w_2-1}{l}\zeta(w_1,w_2)
+\mathbbm{1}_{k_2=1}\binom{l+k_1-1}{k_1-1}\zeta
\begin{varray}
1 \\ l+k_1 
\end{varray}
\,.
\end{align}
\end{cor}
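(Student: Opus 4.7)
The plan is to specialize Proposition~\ref{prop:bwsumformula} to $d=2$, handling the admissible case $k_2 \ge 2$ and the non-admissible case $k_2=1$ separately.

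When $k_2 \ge 2$, I would apply Proposition~\ref{prop:bwsumformula}(i), which reduces for $d=2$ to
\[
P_l(k_1,k_2)=\sum_{a=0}^{k_1-1}(-1)^a P_{k_1-1-a}(l+1)\,P_a(k_2)+(-1)^{k_1}P_{k_2-1}(k_1,l+1).
\]
Every depth-one factor above satisfies $P_m(k)=\binom{m+k-1}{k-1}\zeta(m+k)$ (valid as soon as $m+k\ge 2$), so the first sum becomes a double sum of products $\zeta(\cdot)\zeta(\cdot)$. The substitution $w_1 = a+k_2$, $w_2 = k_1-a+l$, together with the identity $(-1)^{w_1-k_2}=(-1)^{k_2}(-1)^{w_1}$, converts this into the first term of the claim. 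The remaining $(-1)^{k_1}P_{k_2-1}(k_1,l+1)$ unwinds via the definition of $P$ to exactly the double-zeta sum in the statement, since the binomials $\binom{w_1-1}{k_1-1}$ and $\binom{w_2-1}{l}$ automatically enforce admissibility of $\bw$. The indicator term is zero because $k_2 \ge 2$.

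When $k_2 = 1$ I would apply Proposition~\ref{prop:bwsumformula}(ii). The first double sum reproduces the two admissible-looking terms: at $i=1$ the contribution from $a=0$ is killed by $P_0(1)=0$, and the remaining $a\ge 1$ terms again produce the $\zeta(w_1)\zeta(w_2)$ sum after the substitution $w_1=a+1$, $w_2=k_1-a+l$; the boundary case $i=d=2$ yields $(-1)^{k_1}P_{k_2-1}(k_1,l+1)=(-1)^{k_1}\zeta(k_1,l+1)$, which is exactly the lone surviving term of the double-zeta sum (since $w_1\ge k_1$, $w_2\ge l+1$, $w_1+w_2=k_1+l+1$ leave only $(w_1,w_2)=(k_1,l+1)$). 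For the second sum in (ii), the constraints $b_0\ge 2$, $b_1=2$, $b_0+b_1=k_1+l+2$ force $b_0=k_1+l$ and collapse the inner sum to the single choice $j=i=1$, $c_1=1$; Lemma~\ref{lem:I of Y} then identifies the resulting $2$-poset integral with $\zeta\begin{varray} 1 \\ k_1+l \end{varray}$. Combining the signs via $(-1)^{l+k_1}(-1)^{b_0}=1$ and using $\binom{k_1+l-1}{l}=\binom{l+k_1-1}{k_1-1}$ produces the anti-hook contribution with the correct coefficient.

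The main obstacle is the careful bookkeeping at the boundary indices of Proposition~\ref{prop:bwsumformula}(ii): one must correctly interpret the empty-list conventions at $i=d$ in the first sum and at $j=i$ in the second sum, and track all signs, so that the $(-1)^{k_1}\zeta(k_1,l+1)$ term and the anti-hook factor both emerge with the intended coefficient. Once those conventions are pinned down, the remaining work reduces to elementary binomial manipulation.
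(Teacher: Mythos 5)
Your proposal is correct and follows exactly the route the paper intends: Corollary~\ref{cor:pldep2} is stated as an immediate specialization of Proposition~\ref{prop:bwsumformula} to $d=2$, splitting into the admissible case via \eqref{eq:general weighted sum adm} and the case $k_2=1$ via \eqref{eq:general weighted sum nonadm}, with the change of variables and empty-list/boundary conventions handled just as you describe. Your sign check $(-1)^{l+k_1}(-1)^{b_0}=1$ and the identification of the collapsed second sum with $\binom{l+k_1-1}{k_1-1}\zeta\begin{varray}1\\ l+k_1\end{varray}$ match the stated formula, so nothing further is needed.
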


\begin{cor}\label{cor:pldep3}
 For $k_1,k_2,k_3\ge 1$ and $l>0$, we have 
\begin{align}
& P_{l}(k_1,k_2,k_3)=(-1)^{k_1+k_{2}}
\!\!\!\!
\sum_{\substack{w_1,w_2\ge1,w_3\ge 2\\ w_1+w_2+w_3=k_1+k_2+k_3+l}}
\!\!\!\!
\binom{w_1-1}{k_2-1}\binom{w_2-1}{k_1-1}\binom{w_3-1}{l}
\zeta(w_1,w_2,w_3)\\
&\hspace{10mm}+(-1)^{k_2+k_3}\!\!\!\!\!\!\!
\!\!\!\!\!\!\! \sum_{\substack{
w_1\ge1, w_2\ge 2,w_3\ge 2\\
w_1+w_2+w_3=k_1+k_2+k_3+l
}}\!\!\!\!
(-1)^{w_1+w_2}
\binom{w_1-1}{k_2-1}\binom{w_2-1}{k_3-1}\binom{w_3-1}{l}
\zeta(w_1,w_2)\zeta(w_3)\\
&\hspace{10mm}
+(-1)^{k_1+k_3}
\!\!\!\!
\sum_{\substack{w_1\ge 2, w_2\ge1, w_3\ge2\\ w_1+w_2+w_3=k_1+k_2+k_3+l}}
\!\!\!\!
(-1)^{w_1}
\binom{w_1-1}{k_3-1}\binom{w_2-1}{k_1-1}\binom{w_3-1}{l}
\zeta(w_1)\zeta(w_2,w_3)\\
&\hspace{10mm}
+\mathbbm{1}_{k_3=1}(-1)^{k_2+1}\sum_{\substack{b_0\ge 2,b_2\ge 1\\ b_0+b_2=k_1+k_2+l}}(-1)^{b_2}
\binom{b_0-1}{l}\binom{b_2-1}{k_2-1}\\
&\hspace{60mm}
\times
\left\{
(-1)^{b_{2}}\zeta
\begin{varray}
1,b_2\\
b_0
\end{varray}
+\sum_{c_2=1}^{b_2-1}(-1)^{c_2}\zeta
\begin{varray}
c_2\\
b_0\end{varray}
\zeta(b_2-c_2+1)
\right\}
\\
&\hspace{10mm}
+\mathbbm{1}_{k_3=1}(-1)^{k_1} 
\sum_{\substack{b_0\ge 2,b_1\ge 1 \\ b_0+b_1=k_1+k_2+l}}
\binom{b_0-1}{l}\binom{b_1-1}{k_1-1}
\zeta
\begin{varray}
 1 \\ b_1,b_0
 \end{varray}
\,.
\end{align}
\end{cor}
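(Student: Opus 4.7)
I would prove \cref{cor:pldep3} by directly specializing \cref{prop:bwsumformula} to $d = 3$ and substituting the already-established closed forms for $P$ in depth $\leq 2$, namely $P_l(k) = \binom{l+k-1}{k-1}\zeta(l+k)$ (for $l > 0$) together with the depth-$2$ expansion \cref{cor:pldep2}. The proof splits into two cases according to whether $\bk = (k_1, k_2, k_3)$ is admissible.

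\textbf{Admissible case ($k_3 \geq 2$).} Apply \eqref{eq:general weighted sum adm} with $d = 3$. The $i = 3$ summand collapses to the single term $(-1)^{k_1+k_2} P_{k_3-1}(k_2, k_1, l+1)$ because $P_{a_3}(\varnothing) = \mathbbm{1}_{a_3 = 0}$; expanding this directly from the definition of $P$, with the admissible index $(k_2, k_1, l+1)$ (admissible since $l \geq 1$), reproduces the first depth-$3$ MZV sum of \cref{cor:pldep3} verbatim after a trivial relabeling of summation variables. For $i \in \{1, 2\}$, one factor is of depth $1$ and the other of depth $2$; moreover the last entry of the depth-$2$ factor is either $l+1 \geq 2$ or $k_3 \geq 2$, so the $\mathbbm{1}_{k_2=1}$ indicator in \cref{cor:pldep2} vanishes. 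Substituting and reindexing via $w_j = k_j + l - a_j$, the $\zeta(\text{depth }2)\cdot\zeta(\text{depth }1)$ contributions assemble into the second and third sums of \cref{cor:pldep3}.

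\textbf{Non-admissible case ($k_3 = 1$).} Apply \eqref{eq:general weighted sum nonadm} with $d = 3$. The first (triple) sum is handled exactly as above and produces the first three sums of \cref{cor:pldep3}, none of which carry the indicator $\mathbbm{1}_{k_3 = 1}$. The remaining contribution, indexed by $i \in \{1, 2\}$, encodes the anti-hook terms. For each $i$ the forcing $b_i = 2$ collapses the $c_i$-summation to the single value $c_i = 1$; the interior sum over $j \in \{i, \ldots, d-1\}$ then produces, for $i = 1$, the Schur MZV $\zeta\begin{varray} 1, b_2 \\ b_0 \end{varray}$ (from $j = 1$) together with the products $\zeta\begin{varray} c_2 \\ b_0 \end{varray} \zeta(b_2 - c_2 + 1)$ (from $j = 2$), and for $i = 2$ it produces the single value $\zeta\begin{varray} 1 \\ b_1, b_0 \end{varray}$. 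Matching signs and binomials yields the last two $\mathbbm{1}_{k_3 = 1}$ terms in the statement.

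\textbf{Main obstacle.} The computation is essentially bookkeeping, but the bookkeeping is substantial. One must carefully propagate the sign factors $(-1)^{\ldots}$ coming from \cref{prop:bwsumformula} and \cref{cor:pldep2}, reindex the summation variables consistently, verify that the boundary cases $a_i = 0$ and $a_i = k_i - 1$ in \cref{cor:pldep2} produce no spurious extra terms, and—most delicately—check that the triple products of depth-$1$ MZVs arising naively when one substitutes \cref{cor:pldep2} into both the $i = 1$ and $i = 2$ contributions cancel exactly between the two (otherwise the right-hand side would contain $\zeta(\alpha)\zeta(\beta)\zeta(\gamma)$ terms, which it does not).
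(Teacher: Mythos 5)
Your starting point is the right one — the corollary is indeed the $d=3$ specialization of \cref{prop:bwsumformula} — but the substitution you propose for the $i=1,2$ terms is the wrong move, and the cancellation you flag as the ``main obstacle'' does not in fact occur. Take $\bk=(1,2,2)$, $l=1$. The $i=2$ part of \eqref{eq:general weighted sum adm} is $-P_1(1,2)\zeta(2)+2\zeta(1,2)\zeta(3)$; substituting \cref{cor:pldep2}, i.e.\ $P_1(1,2)=\zeta(2)^2-2\zeta(1,3)-\zeta(2,2)$, produces a term $-\zeta(2)^3$, while the $i=1$ part is just $P_0(2)P_0(2,2)=\zeta(2)\zeta(2,2)$ and contains no triple product at all (for $a_1=0$ one is outside the range $l>0$ of \cref{cor:pldep2} anyway). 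So the triple products do not cancel between $i=1$ and $i=2$; what you obtain is an expression that is numerically equal to $P_l(\bk)$ but is \emph{not} the right-hand side of \cref{cor:pldep3} — reconciling the two requires genuine MZV relations (here the stuffle $\zeta(2)^2=2\zeta(2,2)+\zeta(4)$ together with $4\zeta(1,3)=\zeta(4)$). A second symptom of the same problem: the double-zeta terms delivered by \cref{cor:pldep2} carry the binomial $\binom{w_2-1}{l'}$ indexed by the \emph{shift} $l'$, whereas the second and third sums of \cref{cor:pldep3} carry $\binom{w_1-1}{k_2-1}\binom{w_2-1}{k_3-1}$ and $\binom{w_2-1}{k_1-1}\binom{w_3-1}{l}$, indexed by the \emph{entries} of the indices $(k_2,k_3)$ and $(k_1,l+1)$.

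The fix is simpler than what you propose: in the $i=1,2$ terms one should expand only the depth-one factor in closed form, $P_{l'}(k)=\binom{l'+k-1}{k-1}\zeta(l'+k)$, and leave the depth-two factor as its \emph{defining} binomial-weighted sum of (double) zetas. Writing $w_3=k_1+l-a_1$ in the $i=1$ term and $w_1=a_2+k_3$ in the $i=2$ term (and noting that the extra terms created by extending the range of $a_i$ vanish because the binomials do), the second and third sums of the corollary drop out verbatim, with no products of three single zetas ever appearing and no cancellation needed. The same remark applies in the non-admissible case: the factor $P_{a_1}(k_2,1)$ in the first sum of \eqref{eq:general weighted sum nonadm} must be read off from the definition of $P$ (a sum over admissible $\bw$), not from \cref{cor:pldep2}, whose $\mathbbm{1}_{k_2=1}$ indicator would otherwise fire and introduce spurious anti-hook terms. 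Your identification of the genuine anti-hook contributions (the $b_i=2$ collapse forcing $c_i=1$, with $j=i$ giving the $\zeta\begin{varray}1,b_2\\ b_0\end{varray}$ resp.\ $\zeta\begin{varray}1\\ b_1,b_0\end{varray}$ terms and $j>i$ giving the products) is correct, provided one reads the factor $\zeta(b_{i+1},\ldots,b_{j-1},b_j-c_j+1)$ as $\zeta(\varnothing)=1$ when $j=i$.
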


\section{Ribbons}\label{sec:ribbons}

\subsection{Preparation}
In this section, we study the sums of Schur MZVs for ribbon diagrams. 
Recall that a skew Young diagram is called a \emph{ribbon} if it is connected and contains no $2\times 2$ block of boxes.
Explicitly, such a can be drawn as 
\begin{equation}\label{eq:ribbon}
\begin{tikzpicture}[x=12pt,y=12pt,baseline=90pt]
\draw (0,0) rectangle (4,1);
\draw[decorate,decoration={brace,amplitude=5},xshift=0pt,yshift=1pt]
(0,1)--(4,1) node [midway,xshift=0pt,yshift=10pt]{$s_1$};
\draw (4,0) rectangle  (5,4);
\draw[decorate,decoration={brace,mirror,amplitude=5},xshift=1pt,yshift=0pt]
(5,0)--(5,4) node [midway,xshift=12pt,yshift=0pt]{$r_1$};
\draw (4,4) rectangle (8,5);
\draw[decorate,decoration={brace,amplitude=5},xshift=0pt,yshift=1pt]
(4,5)--(8,5) node [midway,xshift=0pt,yshift=10pt]{$s_2$};
\draw (8,4) rectangle (9,8);
\draw[decorate,decoration={brace,mirror,amplitude=5},xshift=1pt,yshift=0pt]
(9,4)--(9,8) node [midway,xshift=12pt,yshift=0pt]{$r_2$};
\draw[dashed] (8.5,8.5)--(10.5,10.5);
\draw (10,11) rectangle (14,12);
\draw[decorate,decoration={brace,amplitude=5},xshift=0pt,yshift=1pt]
(10,12)--(14,12) node [midway,xshift=0pt,yshift=10pt]{$s_n$};
\draw (14,11) rectangle (15,15);
\draw[decorate,decoration={brace,mirror,amplitude=5},xshift=1pt,yshift=0pt]
(15,11)--(15,15) node [midway,xshift=12pt,yshift=0pt]{$r_n$};
\end{tikzpicture}
\end{equation}
where the integers $s_1\ge 0$, $s_2,\ldots,s_n,r_1,\ldots,r_n>0$ indicate the numbers of boxes. 

\begin{defn}
For integers $w,s_1,\ldots,s_n\ge 0$ and $r_1,\ldots,r_n>0$, 
we write 
\begin{equation}
S_w\binom{s_1,\ldots,s_n}{r_1,\ldots,r_n}\coloneqq\sum_{\substack{\bl_1,\ldots,\bl_n\\ \dep(\bl_i)=s_i\\ 
\bk_1,\ldots,\bk_n:\text{ adm.}\\ \dep(\bk_i)=r_i\\ \sum_i\wt(\bk_i)+\sum_i\wt(\bl_i)=w}}
\zeta\begin{varray}\bl_1,&\ldots,&\bl_n\\ \bk_1,&\ldots,&\bk_n \end{varray},
\end{equation}
where we define as a generalization of \eqref{eq:antihookzeta}
\begin{equation}\label{eq:zeta(ll/kk)}
\zeta\begin{varray}\bl_1,&\ldots,&\bl_n\\ \bk_1,&\ldots,&\bk_n \end{varray}
\coloneqq \sum_{\substack{0<b_{i,1}\le\cdots\le b_{i,s_i+1}\\ 0<a_{i,1}<\cdots<a_{i,r_i}\\ 
b_{i,s_i+1}=a_{i,r_i}\;(i=1,\ldots,n)\\
b_{i+1,1}<a_{i,1}\;(i=1,\ldots,n-1)}}
\prod_{i=1}^n\frac{1}{a_{i,1}^{k_{i,1}}\cdots a_{i,r_i}^{k_{i,r_i}} 
b_{i,1}^{l_{i,1}}\cdots b_{i,s_i}^{l_{i,s_i}}}
\end{equation}
for indices $\bl_i=(l_{i,1},\ldots,l_{i,s_i})$ of depth $s_i$ and 
$\bk_i=(k_{i,1},\ldots,k_{i,r_i})$ of depth $r_i$. 
Note that the series \eqref{eq:zeta(ll/kk)} is meaningful even if some $s_i$ is zero, 
i.e., $\bl_i=\varnothing$. 
\end{defn}

\begin{remark} Notice that only for $s_2,\dots,s_n>0$ the $S_w\binom{s_1,\ldots,s_n}{r_1,\ldots,r_n}$ gives the sum over all admissible tableaux of shape \eqref{eq:ribbon} as in \eqref{eq:sk}.
For example, we have 
\[S_w\begin{varray}2 \\ 2 \end{varray}=
\sum_{\substack{a,b,d\ge 1,\,c\ge 2\\ a+b+c+d=w}} 
\zeta\left(\;{\footnotesize\begin{ytableau}
\none & \none & d \\
a & b & c
\end{ytableau}\;}\right) = S_w\left(\;{\footnotesize\begin{ytableau}
\none & \none & \, \\
\, & \, & \,
\end{ytableau}\;}\right) \]
but 
\[S_w\begin{varray}2, & 0 \\ 1, & 1 \end{varray}=
\sum_{\substack{a,b\ge 1,\,c,d\ge 2\\ a+b+c+d=w}} 
\zeta\left(\;{\footnotesize\begin{ytableau}
\none & \none & d \\
a & b & c
\end{ytableau}\;}\right) \ne  S_w\left(\;{\footnotesize\begin{ytableau}
\none & \none & \, \\
\, & \, & \,
\end{ytableau}\;}\right) . \]
In the latter, the index $(d)$ is required to be admissible, i.e., $d\geq 2$. 
\end{remark}

Note that $S_w\begin{varray}s_1,&\ldots,&s_n\\ r_1,&\ldots,&r_n \end{varray}$ is nonzero 
only when 
\[w\ge s_1+\cdots+s_n+r_1+\cdots+r_n+n. \]

Our basic strategy of computing these sums on ribbons is 
to reduce the number of corners $n$ by using the following formula: 

\begin{prop}\label{prop:inductive}
Let $s_1,\ldots,s_n\ge 0$ and $r_1,\ldots,r_n>0$ be integers. 
For $1\le i\le n-1$ with $r_i\ge 2$, we have 
\begin{align}\label{eq:inductive}
S_w\begin{varray}s_1,&\ldots,&s_i,&s_{i+1},&\ldots,&s_n\\ 
r_1,&\ldots,&r_i,&r_{i+1},&\ldots,&r_n\end{varray}
+S_w\begin{varray}s_1,&\ldots,&s_i,&s_{i+1}+1,&\ldots,&s_n\\ 
r_1,&\ldots,&r_i-1,&r_{i+1},&\ldots,&r_n\end{varray}\\
=\sum_{\substack{w_1+w_2=w \\ w_1\ge s_1+\cdots+s_i+r_1+\cdots+r_i+i\\ w_2\ge s_{i+1}+\cdots+s_n+r_{i+1}+\cdots+r_n+n-i }}S_{w_1}\begin{varray}s_1,&\ldots,&s_i\\ r_1,&\ldots,&r_i \end{varray}\cdot 
S_{w_2}\begin{varray}s_{i+1},&\ldots,&s_n\\ r_{i+1},&\ldots,&r_n \end{varray}. 
\end{align}
\end{prop}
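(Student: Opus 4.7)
The plan is to establish the identity directly from the series definition \eqref{eq:zeta(ll/kk)}. The key observation is that the right-hand side, written as a single multiple sum, is exactly the sum defining the unsplit ribbon with one specific linking inequality relaxed.

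I would first interchange the finite sums over admissible fillings $\bk_j,\bl_j$ defining each $S_w$ with the multiple series defining each $\zeta$. The right-hand side then becomes a multiple sum over variables $a_{j,k},b_{j,k}$ and admissible indices of total weight $w$, in which each of the two halves of the ribbon (positions $j\le i$ and $j\ge i+1$) satisfies its own internal ordering and corner constraints, but the two halves are otherwise mutually independent; in particular, the variables $a_{i,1}$ and $b_{i+1,1}$ are unconstrained relative to each other. I would then partition this sum using the dichotomy $b_{i+1,1}<a_{i,1}$ versus $a_{i,1}\le b_{i+1,1}$. The first piece is, by direct inspection of \eqref{eq:zeta(ll/kk)}, exactly $S_w\begin{varray}s_1,\ldots,s_n\\ r_1,\ldots,r_n\end{varray}$, since this is the unique constraint of the unsplit ribbon linking the two halves.

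The remaining step is to match the complementary piece $a_{i,1}\le b_{i+1,1}$ bijectively with the second term on the left-hand side. The bijection moves the variable $a_{i,1}$ together with its exponent $k_{i,1}$ from the bottom of the $i$-th vertical strip to the left end of the $(i+1)$-th horizontal strip: explicitly, set $\tilde b_{i+1,1}:=a_{i,1}$, $\tilde l_{i+1,1}:=k_{i,1}$, $\tilde b_{i+1,j+1}:=b_{i+1,j}$ and $\tilde l_{i+1,j+1}:=l_{i+1,j}$ for $1\le j\le s_{i+1}$, and $\tilde a_{i,j}:=a_{i,j+1}$, $\tilde k_{i,j}:=k_{i,j+1}$ for $1\le j\le r_i-1$, leaving everything outside positions $i$ and $i+1$ unchanged. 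Under this relabelling, the hypothesis $a_{i,1}\le b_{i+1,1}$ becomes the weak inequality $\tilde b_{i+1,1}\le \tilde b_{i+1,2}$ of the new horizontal, the original strict inequality $a_{i,1}<a_{i,2}$ becomes the new linking constraint $\tilde b_{i+1,1}<\tilde a_{i,1}$, the admissibility $\tilde k_{i,r_i-1}=k_{i,r_i}\ge 2$ is preserved (this is where the hypothesis $r_i\ge 2$ is used), and both the total weight and the summand of the multiple series are preserved.

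The main obstacle will be the careful bookkeeping in this bijection: one must verify that every inequality and admissibility condition appearing in the definition of the modified ribbon corresponds under the relabelling to exactly one condition on the complementary sub-sum, with no constraint lost, introduced, or double-counted, and that the map is truly invertible (with $a_{i,1}$ recovered as $\tilde b_{i+1,1}$). Once that routine check is complete, summing the two sub-sums reproduces the full right-hand side and the identity follows.
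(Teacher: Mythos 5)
Your argument is correct and is essentially identical to the paper's proof: both split the product on the right-hand side according to the dichotomy $b_{i+1,1}<a_{i,1}$ versus $a_{i,1}\le b_{i+1,1}$, identify the first piece with the unsplit ribbon, and match the second piece with the modified ribbon by transferring $k_{i,1}$ from $\bk_i$ to the front of $\bl_{i+1}$ (the paper writes this as $\bk_i'=(k_{i,2},\ldots,k_{i,r_i})$, $\bl_{i+1}'=(k_{i,1},l_{i+1,1},\ldots,l_{i+1,s_{i+1}})$). Your bookkeeping of the relabelled inequalities and of where $r_i\ge 2$ enters is the routine verification the paper leaves implicit, so nothing further is needed.
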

\begin{proof}
By switching the inequality $b_{i+1,1}<a_{i,1}$ in \eqref{eq:zeta(ll/kk)} (for the given $i$) 
to the opposite $a_{i,1}\le b_{i+1,1}$ and adding the corresponding Schur MZVs, we deduce that 
\begin{multline}
\zeta\begin{varray}
\bl_1,&\ldots,&\bl_i,&\bl_{i+1},&\ldots,&\bl_n\\
\bk_1,&\ldots,&\bk_i,&\bk_{i+1},&\ldots,&\bk_n
\end{varray}
+\zeta\begin{varray}
\bl_1,&\ldots,&\bl_i,&\bl_{i+1}',&\ldots,&\bl_n\\
\bk_1,&\ldots,&\bk_i',&\bk_{i+1},&\ldots,&\bk_n
\end{varray}\\
=\zeta\begin{varray}\bl_1,&\ldots,&\bl_i\\ \bk_1,&\ldots,&\bk_i \end{varray}\cdot 
\zeta\begin{varray}\bl_{i+1},&\ldots,&\bl_n\\ \bk_{i+1},&\ldots,&\bk_n \end{varray}, 
\end{multline}
where $\bk_i'=(k_{i,2},\ldots,k_{i,r_i})$ and $\bl_{i+1}'=(k_{i,1},l_{i+1,1},\ldots,l_{i+1,s_{i+1}})$. 
Then \eqref{eq:inductive} follows. 
\end{proof}

By using Proposition \ref{prop:inductive} repeatedly, 
the sums on general ribbons are expressed in terms of the values 
of the type $S_w\begin{varray}s,&0,&\ldots,&0\\ r_1,&r_2,&\ldots,&r_n\end{varray}$. 
For example, we have 
\begin{align}
S_w\begin{varray} s,&1\\ r_1,&r_2 \end{varray}
&=\sum_{\substack{w_1+w_2=w \\ w_1\ge s+r_1+2 \\w_2\ge r_2+1}}S_{w_1}\begin{varray} s\\ r_1+1\end{varray} S_{w_2}\begin{varray} 0\\ r_2\end{varray}
-S_w\begin{varray} s,&0\\ r_1+1,&r_2 \end{varray},\\
S_w\begin{varray} s,&2\\ r_1,&r_2 \end{varray}
&=\sum_{\substack{w_1+w_2=w \\ w_1\ge s+r_1+2 \\w_2\ge r_2+2}}S_{w_1}\begin{varray} s\\ r_1+1\end{varray} S_{w_2}\begin{varray} 1\\ r_2\end{varray}
-S_w\begin{varray} s,&1\\ r_1+1,&r_2 \end{varray}\\
&=\sum_{\substack{w_1+w_2=w \\ w_1\ge s+r_1+2 \\w_2\ge r_2+2}}S_{w_1}\begin{varray} s\\ r_1+1\end{varray} S_{w_2}\begin{varray} 1\\ r_2\end{varray}\\
&\qquad -\sum_{\substack{w_1+w_2=w \\ w_1\ge s+r_1+e \\w_2\ge r_2+1}}S_{w_1}\begin{varray} s\\ r_1+2\end{varray} 
S_{w_2}\begin{varray} 0\\ r_2\end{varray}+S_w\begin{varray} s,&0\\ r_1+2,&r_2 \end{varray}
\end{align}
and so on (cf.~Lemma \ref{lem:ribbon_2_corner_prelim}). 
For the latter type sums, the following formula holds: 

\begin{thm}\label{thm:s00}
For $w\ge 0$, $s\ge 0$ and $r_1,\ldots,r_n>0$, we have 
\begin{equation}\label{eq:s00}
S_w\begin{varray}
s, & 0, & \ldots, & 0\\
r_1, & r_2, & \ldots, & r_n
\end{varray}
=\sum_{\substack{t_1,\ldots,t_n\ge 0\\ t_1+\cdots+t_n=s}}
\sum_{\substack{w_i\ge r_i+t_i+1\\ w_1+\cdots+w_n=w}}
\prod_{i=1}^n\binom{w_i-1}{t_i}\zeta(w_1,\ldots,w_n). 
\end{equation}
\end{thm}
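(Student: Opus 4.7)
The plan is to reduce \cref{thm:s00} to Ohno's relation for MZVs. Since $s_2=\cdots=s_n=0$, the shape underlying each $\zeta\binom{\bl_1,\varnothing,\ldots,\varnothing}{\bk_1,\ldots,\bk_n}$ on the left-hand side is an anti-hook with arm $\bl_1$ of length $s$ and leg of length $R=r_1+\cdots+r_n$. Denoting by $(K_1,\ldots,K_R)$ the combined leg index obtained by concatenating $\bk_n,\bk_{n-1},\ldots,\bk_1$ in top-to-bottom order, the chunk-wise admissibility condition becomes $K_{\pi_j}\ge 2$ at the $n$ positions $\pi_j=r_n+r_{n-1}+\cdots+r_{n-j+1}$ for $j=1,\ldots,n$, while $K_i\ge 1$ at all other positions. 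The inter-chunk strict inequalities $b_{i+1,1}<a_{i,1}$ degenerate (since $b_{i+1,1}=a_{i+1,r_{i+1}}$) to the expected strict total order on the combined column.

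The base case $s=0$ follows directly from Ohno's relation for MZVs. With the arm vanishing, the left-hand side simplifies to the sum of $\zeta(K_1,\ldots,K_R)$ over all indices of weight $w$ satisfying the admissibility constraints above. The minimal such index is $(1^{r_n-1},2,\ldots,1^{r_1-1},2)$ of weight $R+n$, and a direct computation of its binary encoding (each block contributes $1^{r_i}0$) shows that its MZV-dual equals $(r_1+1,r_2+1,\ldots,r_n+1)$. Ohno's relation applied to this minimal index with shift $l=w-R-n$ then yields the desired formula with $t_i=0$ for all $i$.

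For general $s>0$, the arm is handled by coarsening the non-strict inequality $b_1\le\cdots\le b_s\le a_R$ into a strict chain $b^\flat_1<\cdots<b^\flat_d\le a_R$ supporting a composition $\bl^\flat$ of the arm weight. The Chu--Vandermonde identity gives the multiplicity $\binom{l-d}{s-d}$ for each choice of coarsened depth $d$ (where $l=\wt(\bl_1)$), via the usual count $\sum_{m_j\ge 1,\,\sum m_j=s}\prod\binom{l^\flat_j-1}{m_j-1}=\binom{l-d}{s-d}$. The strict arm chain then interleaves with the strict leg chain to yield standard MZVs on an enlarged index; applying Ohno's relation to each such enlarged index and summing via a second Chu--Vandermonde over how arm entries distribute among the $n$ leg chunks produces the factor $\prod_{i=1}^n\binom{w_i-1}{t_i}$, where $t_i$ counts the arm entries falling within the $i$-th chunk's region.

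The main obstacle is the combinatorial bookkeeping required in this last step: the arm coarsening, the chunk-wise leg admissibility, and the interleaving of arm positions among leg chunks interact nontrivially, and it must be shown that the resulting coefficients collapse exactly to $\prod_i\binom{w_i-1}{t_i}$. I expect this to be tractable either by a generating-function argument matching the $s$-indexed generating functions on both sides, or by a 2-poset integral representation in the spirit of \cref{sec:weightedsum} (treating the arm and leg as two chains joined at the top vertex and expanding via admissible parts), but the bookkeeping requires delicate combinatorial care.
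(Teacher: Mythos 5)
Your strategy is essentially the paper's: expand each anti-hook series by stuffling the non-strict arm chain against the strict leg chain, apply Ohno's relation to the resulting families of MZVs with prescribed admissibility positions, and collapse the binomial coefficients via Chu--Vandermonde. Your base case $s=0$ is correct and is exactly the paper's argument specialized to $u=0$ (the dual of the minimal pattern $(\{1\}^{r_n-1},2,\ldots,\{1\}^{r_1-1},2)$ is indeed $(r_1+1,\ldots,r_n+1)$). However, for $s>0$ you explicitly defer the decisive step --- ``it must be shown that the resulting coefficients collapse exactly to $\prod_i\binom{w_i-1}{t_i}$ \ldots\ I expect this to be tractable'' --- and this is precisely where the content of the proof lies, so as written the argument has a genuine gap rather than a complete verification.

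To close it, the paper fixes a target index $(w_1,\ldots,w_{r+u})$ of depth $r+u$ with $0\le u\le s$ and counts its multiplicity in the stuffle expansion in two factors: choosing $u_1,\ldots,u_n\ge 0$ with $u_1+\cdots+u_n=u$ (the number of ``pure arm'' entries landing in the region of the $i$-th chunk) and pinning the entry containing the last $k_p$ of each chunk, the placements of the remaining $k_p$'s contribute $\prod_{i=1}^n\binom{r_i-1+u_i}{u_i}$, while the number of ways the $s$ arm parts decompose into the entries (with $s-u$ merges in total) is a single global binomial $\binom{w-n-r-u}{s-u}$, \emph{not} a chunk-by-chunk product at this stage. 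Only after applying Ohno's relation to the inner sum (converting it to $\sum_{w_i\ge r_i+u_i+1}\zeta(w_1,\ldots,w_n)$) is the global factor re-expanded as $\binom{w-n-r-u}{s-u}=\sum_{v_1+\cdots+v_n=s-u}\prod_i\binom{w_i-1-r_i-u_i}{v_i}$ in terms of the \emph{post-Ohno} variables $w_i$; setting $t_i=u_i+v_i$ and using $\sum_{u_i=0}^{t_i}\binom{w_i-1-r_i-u_i}{t_i-u_i}\binom{r_i+u_i-1}{u_i}=\binom{w_i-1}{t_i}$ then yields the claimed product. Your heuristic that ``$t_i$ counts the arm entries falling within the $i$-th chunk's region'' conflates the pre-Ohno merge count (which is global) with the post-Ohno split (which is formal), and the order of operations --- Ohno first, then Vandermonde in the new variables --- is what makes the bookkeeping tractable; without it your ``second Chu--Vandermonde'' does not obviously apply chunk by chunk.
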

\begin{proof}
Put $r\coloneqq r_1+\cdots+r_n$. 
Then the left hand side is the sum of the series 
\begin{equation}\label{eq:linearize}
\ytableausetup{boxsize=1.2em}
\zeta\left(\;\begin{ytableau}
\none & \none & \none & k_1 \\
\none & \none & \none & \svdots \\
l_1 & \cdots & l_s & k_r
\end{ytableau}\;\right)=\sum_{0<a_1<\cdots<a_r\ge b_s\ge\cdots\ge b_1>0}
\frac{1}{a_1^{k_1}\cdots a_r^{k_r}b_1^{l_1}\cdots b_s^{l_s}},
\end{equation}
where $(l_1,\ldots,l_s)$ runs through indices of depth $s$ and $(k_1,\ldots,k_r)$ runs through 
indices of depth $r$ such that $k_u\ge 2$ for $u\in\{r_n,r_n+r_{n-1},\ldots,r\}$, 
satisfying $l_1+\cdots+l_s+k_1+\cdots+k_r=w$. 
By ``stuffling'', i.e., classifying all possible orders of $a_p$'s and $b_q$'s, 
this series is expanded into a certain sum of MZVs $\zeta(x_1,\ldots,x_{r+u})$ 
of weight $w$ with $0\le u\le s$. Here each of entries of $(x_1,\ldots,x_{r+u})$ is 
of the form 
\begin{equation}\label{eq:decompose}
k_p+l_q+\cdots+l_{q'} \text{ or } l_q+\cdots+l_{q'}, 
\end{equation}
where $l_q,\ldots,l_{q'}$ are some consecutive members of $l_1,\ldots,l_s$, 
possibly zero members in the former case and at least one member in the latter. 

Let us fix an index $(x_1,\ldots,x_{r+u})$ of weight $w$ with $0\le u\le s$ 
and count how many times $\zeta(x_1,\ldots,x_{r+u})$ appears in 
$S_w\begin{varray}
s, & 0, & \ldots, & 0\\
r_1, & r_2, & \ldots, & r_n
\end{varray}$
when all series \eqref{eq:linearize} are expanded as above. 
First, given $u_1,\ldots,u_n\ge 0$ satisfying $u_1+\cdots+u_n=u$, 
consider the cases that 
$x_{r_n+u_n+\cdots+r_i+u_i}$ contains $k_{r_n+\cdots+r_i}$ as a summand for $i=1,\ldots,n$. 
Then the number of possibilities of the places where other $k_p$'s appear is 
$\prod_{i=1}^n\binom{r_i-1+u_i}{u_i}$. Moreover, 
each entry of $(x_1,\ldots,x_{r+u})$ is decomposed into a sum of type \eqref{eq:decompose} 
and the total number of the plus symbol `$+$' is $s-u$. 
The number of possible such decompositions is 
\begin{align}
\binom{\sum_{i=1}^n\bigl\{(x_{h_{i+1}+1}-1)+\cdots+(x_{h_i-1}-1)+(x_{h_i}-2)\bigr\}}{s-u}&\\
=\binom{w-n-r-u}{s-u}&, 
\end{align}
where we set $h_i=r_n+u_n+\cdots+r_i+u_i$ and $h_{n+1}=0$. 
Therefore we obtain that 
\begin{align}
S_w\begin{varray}
s, & 0, & \ldots, & 0\\
r_1, & r_2, & \ldots, & r_n
\end{varray}
=\sum_{\substack{u_1,\ldots,u_n\ge 0\\ u\coloneqq u_1+\cdots+u_n\le s}}
\binom{w-n-r-u}{s-u}\prod_{i=1}^n\binom{r_i+u_i-1}{u_i}&\\
\times\sum_{\substack{w_1,\ldots,w_{r+u}\ge 1\\ w_p\ge 2\;(p=r_n+u_n+\cdots+r_i+u_i)\\ 
w_1+\cdots+w_{r+u}=w}}\zeta(w_1,\ldots,w_{r+u})&. 
\end{align}
By applying Ohno's relation to the last sum, we see that this is equal to 
\begin{align}\label{eq:Ohno}
S_w\begin{varray}
s, & 0, & \ldots, & 0\\
r_1, & r_2, & \ldots, & r_n
\end{varray}
=\sum_{\substack{u_1,\ldots,u_n\ge 0\\ u\coloneqq u_1+\cdots+u_n\le s}}
\binom{w-n-r-u}{s-u}\prod_{i=1}^n\binom{r_i+u_i-1}{u_i}&\\
\times\sum_{\substack{w_i\ge r_i+u_i+1\\ w_1+\cdots+w_n=w}}\zeta(w_1,\ldots,w_n)&. 
\end{align}
By means of the identity 
\[
\binom{w-n-r-u}{s-u}
=\sum_{\substack{v_1,\ldots,v_n\ge 0\\ v_1+\cdots+v_n=s-u}}
\prod_{i=1}^n\binom{w_i-1-r_i-u_i}{v_i}, 
\]
one can rewrite the expression \eqref{eq:Ohno} as 
\begin{align}
&\sum_{\substack{u_1,\ldots,u_n\ge 0\\ v_1,\ldots,v_n\ge 0\\ u_1+v_1+\cdots+u_n+v_n=s}}
\sum_{\substack{w_i\ge r_i+u_i+1\\ w_1+\cdots+w_n=w}}
\prod_{i=1}^n\binom{w_i-1-r_i-u_i}{v_i}\binom{r_i+u_i-1}{u_i}\zeta(w_1,\ldots,w_n)\\
&=\sum_{\substack{t_1,\ldots,t_n\ge 0\\ t_1+\cdots+t_n=s}}
\sum_{\substack{w_i\ge r_i+t_i+1\\ w_1+\cdots+w_n=w}}
\prod_{i=1}^n\Biggl(\sum_{u_i=0}^{t_i}\binom{w_i-1-r_i-u_i}{t_i-u_i}\binom{r_i+u_i-1}{u_i}\Biggr)
\zeta(w_1,\ldots,w_n). 
\end{align}
Here we put $t_i=u_i+v_i$ and use that 
\[\binom{w_i-1-r_i-u_i}{t_i-u_i}=0\quad \text{ when $t_i>u_i$ and } r_i+u_i+1\le w_i\le r_i+t_i. \]
Thus the theorem follows from the identity 
\begin{align}
\sum_{u_i=0}^{t_i}\binom{w_i-1-r_i-u_i}{t_i-u_i}\binom{r_i+u_i-1}{u_i}
&=\binom{w_i-1}{t_i}. \qedhere
\end{align}
\end{proof}

\begin{cor}\label{cor:ncornerribbon}
For $n\geq 1$ and integers $s_1,\ldots,s_n\ge 0$ and $r_1,\ldots,r_n>0$ the sum 
$S_w\binom{s_1,\ldots,s_n}{r_1,\ldots,r_n}$ 
can be written as a $\Q$-linear combination of MZVs of weight $w$ and depth $\leq n$.
\end{cor}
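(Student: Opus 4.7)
The plan is to combine Proposition \ref{prop:inductive} with Theorem \ref{thm:s00} through a double induction---outer on the number of corners $n$, and inner on the quantity $T\coloneqq s_2+\cdots+s_n$.

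For the outer base case $n=1$, Theorem \ref{thm:s00} degenerates to $S_w\binom{s_1}{r_1}=\binom{w-1}{s_1}\zeta(w)$, which is a $\Q$-multiple of a depth-$1$ MZV. Now assume the claim for all ribbons with fewer than $n$ corners. The inner base case $T=0$ is again handled directly by Theorem \ref{thm:s00}, which expresses $S_w\binom{s_1,0,\ldots,0}{r_1,\ldots,r_n}$ as a $\Q$-linear combination of MZVs of depth exactly $n$.

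For the inner inductive step, assume $T>0$, pick some $i\in\{1,\ldots,n-1\}$ with $s_{i+1}\ge 1$, and apply Proposition \ref{prop:inductive} to the tuple obtained from $(s_{\bullet};r_{\bullet})$ by replacing $s_{i+1}$ with $s_{i+1}-1$ and $r_i$ with $r_i+1$; the admissibility hypothesis $r_i+1\ge 2$ is then automatic. Rearranging the resulting identity gives
\begin{align*}
S_w\binom{s_1,\ldots,s_{i+1},\ldots,s_n}{r_1,\ldots,r_i,\ldots,r_n}
&=\sum_{w_1+w_2=w}S_{w_1}\binom{s_1,\ldots,s_i}{r_1,\ldots,r_i+1}\cdot S_{w_2}\binom{s_{i+1}-1,\ldots,s_n}{r_{i+1},\ldots,r_n}\\
&\quad -S_w\binom{s_1,\ldots,s_{i+1}-1,\ldots,s_n}{r_1,\ldots,r_i+1,\ldots,r_n}.
\end{align*}
In each product term the two factors are sums on ribbons with $i$ and $n-i$ corners, both strictly less than $n$, so by the outer induction each is a $\Q$-linear combination of MZVs of depth at most $i$ and $n-i$ respectively. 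Expanding their product via the stuffle (harmonic) product writes it as a $\Q$-linear combination of MZVs of depth at most $i+(n-i)=n$. The correction term on the right keeps the corner count $n$ but has strictly smaller $T$, so the inner induction hypothesis applies to it.

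The main point to watch will be that the depth bound genuinely survives the harmonic product step---the relevant fact is that the stuffle product of an MZV of depth $d_1$ with one of depth $d_2$ is a $\Z$-linear combination of MZVs of depths between $\max(d_1,d_2)$ and $d_1+d_2$---and that the weight thresholds match up cleanly, with the convention that $S_{w_j}\binom{\cdots}{\cdots}$ vanishes when $w_j$ is below its threshold. Both of these are routine. The conceptual content is that Proposition \ref{prop:inductive} lets us ``push'' each horizontal stretch $s_j$ (for $j\ge 2$) into the preceding vertical stretch $r_{j-1}$, trading each reduction for products with strictly fewer corners, and the recursion terminates at the explicit $n$-corner evaluation of Theorem \ref{thm:s00}.
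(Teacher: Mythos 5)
Your proof is correct and is essentially the paper's argument: the paper dispatches this corollary in one line as a direct consequence of Proposition \ref{prop:inductive} and Theorem \ref{thm:s00}, and your double induction (outer on the number of corners, inner on $s_2+\cdots+s_n$) is precisely the systematic version of the reduction the paper sketches just before Theorem \ref{thm:s00}. The points you flag---the depth bound surviving the stuffle product and the vanishing convention below the weight threshold---are indeed the only details to check, and you handle them correctly.
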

\begin{proof}
This is a direct consequence of Proposition \ref{prop:inductive} and Theorem \ref{thm:s00}.
\end{proof}
\begin{cor}\label{cor:symmetric sum}
For any $s\ge 0$ and $r_1,\ldots,r_n>0$, the symmetric sum 
\[\sum_{\sigma\in\mathfrak{S}_n} 
S_w\begin{varray} s,&0,&\ldots,&0\\ r_{\sigma(1)},&r_{\sigma(2)},&\ldots,&r_{\sigma(n)}\end{varray}\]
is a polynomial in single zeta values. In particular, 
$S_w\begin{varray} s,&0,&\ldots,&0\\ r,&r,&\ldots,&r\end{varray}$ 
is a polynomial in single zeta values. 
\end{cor}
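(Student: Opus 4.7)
The plan is to reduce the claim to Hoffman's classical symmetric sum theorem \cite{Hoffman92}, which asserts that for any tuple $(w_1,\ldots,w_n)$ of integers with $w_i\ge 2$,
\[
\sum_{\sigma\in\mathfrak{S}_n}\zeta(w_{\sigma(1)},\ldots,w_{\sigma(n)})
=\sum_{\pi\in P(n)}(-1)^{n-\ell(\pi)}\prod_{B\in\pi}(|B|-1)!\,\zeta\Bigl(\sum_{i\in B}w_i\Bigr),
\]
and in particular is a $\Q$-polynomial in single zeta values. (Here $P(n)$ denotes the set of set partitions of $\{1,\ldots,n\}$ and $\ell(\pi)$ the number of blocks.)

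The first step is a relabeling of variables inside the expression provided by Theorem~\ref{thm:s00}. Applying the bijections $(w_1,\ldots,w_n)\mapsto(w_{\sigma^{-1}(1)},\ldots,w_{\sigma^{-1}(n)})$ and $(t_1,\ldots,t_n)\mapsto(t_{\sigma^{-1}(1)},\ldots,t_{\sigma^{-1}(n)})$ to the right-hand side of Theorem~\ref{thm:s00}, one checks that
\[
S_w\begin{varray} s,&0,&\ldots,&0\\ r_{\sigma(1)},&r_{\sigma(2)},&\ldots,&r_{\sigma(n)}\end{varray}
=\sum_{\substack{t_1,\ldots,t_n\ge 0\\ t_1+\cdots+t_n=s}}
\sum_{\substack{w_i\ge r_i+t_i+1\\ w_1+\cdots+w_n=w}}
\prod_{i=1}^n\binom{w_i-1}{t_i}\,\zeta(w_{\sigma(1)},\ldots,w_{\sigma(n)}).
\]
Summing over $\sigma\in\mathfrak{S}_n$ and exchanging the order of summation, the symmetric sum in the statement becomes
\[
\sum_{\substack{t_1,\ldots,t_n\ge 0\\ t_1+\cdots+t_n=s}}
\sum_{\substack{w_i\ge r_i+t_i+1\\ w_1+\cdots+w_n=w}}
\prod_{i=1}^n\binom{w_i-1}{t_i}
\sum_{\sigma\in\mathfrak{S}_n}\zeta(w_{\sigma(1)},\ldots,w_{\sigma(n)}).
\]
Since $w_i\ge r_i+t_i+1\ge 2$ for every $i$, Hoffman's theorem applies to the innermost symmetric sum and expresses it as a $\Q$-polynomial in Riemann zeta values. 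As the remaining outer sum is finite with rational coefficients, the whole expression is again a polynomial in single zeta values.

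For the last assertion, when $r_1=\cdots=r_n=r$ every summand $S_w\binom{s,0,\ldots,0}{r_{\sigma(1)},\ldots,r_{\sigma(n)}}$ coincides with $S_w\binom{s,0,\ldots,0}{r,\ldots,r}$, so the symmetric sum equals $n!\cdot S_w\binom{s,0,\ldots,0}{r,\ldots,r}$, and dividing by $n!$ yields the claim. The only real computation is the variable-change step, which is purely bookkeeping; both Hoffman's formula and the verification that every permuted MZV is admissible ($w_i\ge 2$) are immediate.
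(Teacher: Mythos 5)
Your proof is correct and follows essentially the same route as the paper: apply Theorem \ref{thm:s00}, relabel the summation variables so that the sum over $\sigma$ lands on $\sum_{\sigma}\zeta(w_{\sigma(1)},\ldots,w_{\sigma(n)})$, and invoke Hoffman's symmetric sum theorem (the paper cites \cite[Theorem 2.2]{Hoffman92} for exactly this step). The explicit check that $w_i\ge r_i+t_i+1\ge 2$ and the $n!$ argument for the equal-$r_i$ case are the same observations the paper leaves implicit.
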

\begin{proof}
By \eqref{eq:s00}, this symmetric sum is equal to 
\[\sum_{\substack{t_1,\ldots,t_n\ge 0\\ t_1+\cdots+t_n=s}}
\sum_{\substack{w_i\ge r_i+t_i+1\\ w_1+\cdots+w_n=w}}
\prod_{i=1}^n\binom{w_i-1}{t_i}\sum_{\sigma\in\mathfrak{S}_n} \zeta(w_{\sigma(1)},\ldots,w_{\sigma(n)}). \]
Thus the claim follows from Hoffman's symmetric sum formula \cite[Theorem 2.2]{Hoffman92}. 
\end{proof}

\begin{example}
The case of $n=1$ will be treated in Theorem \ref{thm:anti-hook}. 
Here let us consider the case of $n=2$ and $r_1=r_2=r$. 
We have 
\begin{align}
S_w\begin{varray} s,& 0\\ r,& r \end{varray}
&=\sum_{\substack{t_1,t_2\ge 0\\ t_1+t_2=s}}\sum_{\substack{w_i\ge r+t_i+1\\ w_1+w_2=w}}
\binom{w_1-1}{t_1}\binom{w_2-1}{t_2}\zeta(w_1,w_2)\\
&=\frac{1}{2}\sum_{\substack{t_1,t_2\ge 0\\ t_1+t_2=s}}\sum_{\substack{w_i\ge r+t_i+1\\ w_1+w_2=w}}
\binom{w_1-1}{t_1}\binom{w_2-1}{t_2}\bigl(\zeta(w_1,w_2)+\zeta(w_2,w_1)\bigr)\\
&=\frac{1}{2}\sum_{\substack{t_1,t_2\ge 0\\ t_1+t_2=s}}\sum_{\substack{w_i\ge r+t_i+1\\ w_1+w_2=w}}
\binom{w_1-1}{t_1}\binom{w_2-1}{t_2}\bigl(\zeta(w_1)\zeta(w_2)-\zeta(w)\bigr). 
\end{align}
This is a kind of sum formula of polynomial type. 
For a sum formula of bounded type, see \S\ref{sec:general_nagoya} (for instance see Example~\ref{ex:hook case} for a hook case). 
\end{example}

\subsection{Sum formulas of single type}
\label{subsec:single_type}

In this subsection,
we present sum formulas of single type for two special types of ribbons. 
The first is a simultaneous generalization of the classical sum formulas 
for the MZVs and MZSVs stated in \eqref{eq:mzvsumformula}.

\begin{thm}\label{thm:anti-hook}
For any integers $r\ge 1$, $s\ge 0$ and $w\ge s+r+1$, we have 
\begin{equation}\label{eq:anti-hook}
S_w\begin{varray}s \\ r\end{varray}=\binom{w-1}{s}\zeta(w). 
\end{equation}
\end{thm}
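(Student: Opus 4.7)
The plan is to obtain the theorem as the immediate $n=1$ specialization of Theorem~\ref{thm:s00}, which is stated and proved just above. Setting $n=1$ and $r_1 = r$ on the right-hand side of \eqref{eq:s00}, the outer sum over $t_1 \ge 0$ with $t_1 = s$ collapses to the single term $t_1 = s$, and the inner sum over $w_1 \ge r + s + 1$ with $w_1 = w$ collapses to the single term $w_1 = w$ (which exists precisely because of the hypothesis $w \ge s + r + 1$). This yields
$$S_w\begin{varray}s \\ r\end{varray} = \binom{w-1}{s}\zeta(w),$$
as claimed.

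Since the statement follows so directly from Theorem~\ref{thm:s00}, the real combinatorial work has already been done upstream; no new idea is needed. A self-contained argument tailored to $n=1$ would follow the same template as the proof of Theorem~\ref{thm:s00}: expand each $\zeta\begin{varray}\bl \\ \bk\end{varray}$ via stuffling into a sum of MZVs of weight $w$, track the multiplicities using the binomial identity $\sum_{u=0}^{t}\binom{w-1-r-u}{t-u}\binom{r+u-1}{u} = \binom{w-1}{t}$, and invoke Ohno's relation together with the classical sum formula \eqref{eq:mzvsumformula} to collapse everything to a rational multiple of $\zeta(w)$. There is no substantive obstacle beyond what is already handled in Theorem~\ref{thm:s00}.

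As a sanity check, the two boundary cases should recover the two classical sum formulas \eqref{eq:mzvsumformula}. For $s=0$ the anti-hook degenerates to a single column of height $r$, and the formula gives $S_w\begin{varray}0 \\ r\end{varray} = \zeta(w)$, matching Granville--Hoffman for MZVs of depth $r$. For $r=1$ it degenerates to a single row of length $s+1$, and the formula gives $S_w\begin{varray}s \\ 1\end{varray} = \binom{w-1}{s}\zeta(w)$, matching the MZSV sum formula in depth $s+1$. This consistency check reinforces the interpretation of Theorem~\ref{thm:anti-hook} as the common generalization of the two classical statements.
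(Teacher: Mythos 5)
Your proposal is correct and is exactly the paper's argument: the paper's proof of Theorem~\ref{thm:anti-hook} consists of the single remark that it follows immediately from Theorem~\ref{thm:s00}, which is precisely your $n=1$ specialization. The consistency checks against the classical sum formulas \eqref{eq:mzvsumformula} are a nice addition but not needed.
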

\begin{proof}
This immediately follows from Theorem \ref{thm:s00}.
\end{proof}

The second formula is for the ``stair of tread one'' shape.
The proof is an application of Proposition \ref{prop:inductive} 
and Theorem \ref{thm:s00}. 

\begin{thm}\label{thm:stair 1}
For any integers $r\ge 1$, $n\ge 1$ and $w\ge (r+2)n+1$, we have 
\begin{equation}\label{eq:stair 1}
S_w\begin{varray}\{1\}^{n-1},&1\\ \{r\}^{n-1},&r+1\end{varray}
=c_{w,r}(n)\zeta(w), 
\end{equation}
where
\[
c_{w,r}(n)\coloneqq \frac{w-1}{n}\binom{w-(r+1)n-2}{n-1}. 
\]
\end{thm}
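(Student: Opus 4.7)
The plan is to proceed by induction on $n$. The base case $n = 1$ is immediate from Theorem \ref{thm:anti-hook}: $S_w\binom{1}{r+1} = (w-1)\zeta(w) = c_{w,r}(1)\zeta(w)$.

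For the inductive step, introduce the family of intermediate sums
\[W_j \coloneqq S_w\binom{\{1\}^{n-j},\,\{0\}^{j}}{\{r\}^{n-j-1},\,\{r+1\}^{j+1}}, \qquad 0 \le j \le n-1,\]
so that $W_0$ is the quantity in \eqref{eq:stair 1} and $W_{n-1} = S_w\binom{1,\,\{0\}^{n-1}}{\{r+1\}^n}$. Applying Proposition \ref{prop:inductive} at position $i = n-j-1$, with $W_j$ playing the role of the second summand on the left-hand side of \eqref{eq:inductive} and $W_{j+1}$ the first (the hypothesis $r_i \ge 2$ is satisfied since the matched first term has $r_i = r+1 \ge 2$), yields
\[W_j + W_{j+1} = \sum_{w_1+w_2=w} S_{w_1}\binom{\{1\}^{n-j-1}}{\{r\}^{n-j-2},\,r+1}\cdot T_{j+1}(w_2),\]
where $T_k(w_2) \coloneqq S_{w_2}\binom{\{0\}^k}{\{r+1\}^k}$. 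By the inductive hypothesis the first factor equals $c_{w_1,r}(n-j-1)\zeta(w_1)$, and by Theorem \ref{thm:s00} one has $T_k(w_2) = \sum_{u_1,\ldots,u_k \ge r+2,\,u_1+\cdots+u_k = w_2}\zeta(u_1,\ldots,u_k)$. Telescoping this recursion gives
\[W_0 = \sum_{j=0}^{n-2}(-1)^j \sum_{w_1+w_2=w} c_{w_1,r}(n-j-1)\zeta(w_1)\,T_{j+1}(w_2) + (-1)^{n-1}W_{n-1},\]
and Theorem \ref{thm:s00} further expands $W_{n-1}$ as a sum of depth-$n$ MZVs with coefficients $(w_k - 1)$.

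The key step is then to expand each product $\zeta(w_1)\,T_{j+1}(w_2)$ via the stuffle (harmonic) product and verify that, once all contributions are assembled with their alternating signs, every MZV of depth $\ge 2$ cancels. This is exactly what happens in the case $n = 2$: the depth-$2$ MZVs produced by stuffling $\zeta(w_1)\zeta(w_2)$ match precisely those appearing in $W_1 = S_w\binom{1,0}{r+1,r+1}$ (as given by Theorem \ref{thm:s00}), and the cancellation leaves $W_0 = \zeta(w)\sum_{w_1\ge r+3,\,w_2\ge r+2,\,w_1+w_2=w}(w_1-1) = c_{w,r}(2)\zeta(w)$. For general $n$ the cancellation follows the same pattern, relying on the full symmetry of the summation domain defining $T_k$ (and, when needed, on Hoffman's symmetric sum formula). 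Once only single zeta terms remain, the coefficient of $\zeta(w)$ simplifies to $c_{w,r}(n) = \frac{w-1}{n}\binom{w-(r+1)n-2}{n-1}$ by a direct manipulation of binomial sums.

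The main obstacle is this systematic cancellation of depth-$\ge 2$ MZVs: although the pattern is transparent in the case $n = 2$, for general $n$ one must carefully track how the stuffled terms arising from $\zeta(w_1)\,T_{j+1}(w_2)$ at each telescoping level pair against those at the next, and against the depth-$n$ terms coming from $W_{n-1}$. Managing the signs and the overlapping summation ranges across these contributions, and subsequently identifying the resulting coefficient with $c_{w,r}(n)$, accounts for most of the technical work.
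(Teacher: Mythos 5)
Your setup coincides with the paper's own proof: the same chain of intermediate sums (your $W_j$ is the paper's $S_{w,r}(n,n-j)$), the same recursion obtained from Proposition \ref{prop:inductive}, and the same inputs from Theorems \ref{thm:anti-hook} and \ref{thm:s00}. The gap is the cancellation step, which you correctly identify as the heart of the matter but do not carry out, and for which the mechanism you point to (``full symmetry of the summation domain defining $T_k$'' and Hoffman's symmetric sum formula) is not the right one. Stuffling $\zeta(w_1)$ against $T_{j+1}(w_2)$ produces insertion terms of depth $j+2$, in which the coefficient $c_{w_1,r}(n-j-1)$ is attached to a distinguished entry constrained to be $\ge(r+2)(n-j-1)+1$, and contraction terms of depth $j+1$, in which, after summing over $w_1$, the coefficient of $\zeta(w_1,\ldots,w_{j+1})$ with $w_a\ge(r+2)(n-j)+1$ is the partial sum $\sum_{b=(r+2)i+1}^{w_a-(r+2)}c_{b,r}(i)$ with $i=n-j-1$. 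For your alternating sum over $j$ to collapse, the contraction terms at level $j$ must cancel the insertion terms at level $j+1$ \emph{entry by entry}; this is precisely the identity
\[
\sum_{b=(r+2)i+1}^{w_a-(r+2)}c_{b,r}(i)=c_{w_a,r}(i+1),
\]
which follows from writing $c_{b,r}(i)=(r+1)\binom{b-(r+1)i-2}{i-1}+\binom{b-(r+1)i-1}{i}$ and applying the hockey-stick identity --- not from any symmetry of the index set (the relevant summation domains are not symmetric, since exactly one entry carries the stronger lower bound). The $n=2$ case is misleading here: there the only matching needed is against the final term $W_{n-1}$, whereas for $n\ge3$ each intermediate depth must cancel between two consecutive telescoping levels, and that is where the displayed identity is indispensable.

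Once you supply that identity, your argument is the paper's proof in a slightly different packaging: the paper writes $S_{w,r}(n,i)+S_{w,r}(n,i+1)=A_i+B_i$ (insertion plus contraction parts), proves the key lemma $A_{i+1}=B_i$ via the identity above, checks $S_{w,r}(n,1)=A_1$ from Theorem \ref{thm:s00}, and concludes $S_{w,r}(n,i)=A_i$ for all $i$ by a second induction on $i$; your telescoped expression is exactly the alternating sum of these same relations. As it stands, however, the proposal asserts the decisive cancellation rather than proving it, and the proof is incomplete.
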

See \eqref{eq:exampletreadone} in the introduction for examples in the cases $r=n=2$ and $r=1$, $n=3$.
\begin{remark}
The coefficient $c_{w,r}(n)$ is a positive integer. In fact, 
\[
c_{w,r}(n)=(r+1)\binom{w-(r+1)n-2}{n-1}+\binom{w-(r+1)n-1}{n}. 
\]
\end{remark}

\begin{proof}[Proof of Theorem \ref{thm:stair 1}]
We prove \eqref{eq:stair 1} by induction on $n$. 
For $n=1$, this is a special case of \eqref{eq:anti-hook}. 
Let us assume $n>1$ and for $1\le i\le n$, put 
\[
S_{w,r}(n,i)\coloneqq S_w\begin{varray}
\{1\}^{i-1}, & 1,   &\{0\}^{n-i}\\
\{r\}^{i-1}, & r+1, &\{r+1\}^{n-i}
\end{varray}, \qquad 
T_{w,r}(n)\coloneqq S_w\begin{varray}
\{0\}^n\\
\{r+1\}^n
\end{varray}. 
\]
Then, for $1\le i\le n-1$, Proposition \ref{prop:inductive} shows that 
\[
S_{w,r}(n,i)+S_{w,r}(n,i+1)
=\sum_{\substack{w_1\ge (r+2)i+1\\ w_2\ge (r+2)(n-i)}}S_{w_1,r}(i,i)\cdot T_{w_2,r}(n-i)
\]
(here and in what follows, we omit the ``total weight $=w$'' condition like $w_1+w_2=w$). 
The induction hypothesis gives $S_{w_1,r}(i,i)=c_{w_1,r}(i)\zeta(w_1)$ for $1\le i\le n-1$, 
while Theorem \ref{thm:s00} shows that 
\[
T_{w_2,r}(n-i)=\sum_{\substack{w'_1,\ldots,w'_{n-i}\ge r+2\\ w'_1+\cdots+w'_{n-i}=w_2}}
\zeta(w'_1,\ldots,w'_{n-i}). 
\]
Hence we obtain 
\begin{equation}\label{eq:A_i+B_i}
\begin{split}
S_{w,r}(n,i)+S_{w,r}(n,i+1)
&=\sum_{\substack{w_1\ge (r+2)i+1\\ w'_1,\ldots,w'_{n-i}\ge r+2}}
c_{w_1,r}(i)\zeta(w_1)\zeta(w'_1,\ldots,w'_{n-i})\\
&=A_i+B_i, 
\end{split}
\end{equation}
where $A_i$ and $B_i$ are defined by 
\begin{align}
A_i&\coloneqq\sum_{\substack{w_1\ge (r+2)i+1\\ w'_1,\ldots,w'_{n-i}\ge r+2}}
c_{w_1,r}(i)\bigl\{\zeta(w_1,w'_1,\ldots,w'_{n-i})+\cdots+\zeta(w'_1,\ldots,w'_{n-i},w_1)\bigr\}\notag\\
&=\sum_{j=1}^{n-i+1}\sum_{\substack{w_1,\ldots,w_{n-i+1}\ge r+2\\ w_j\ge (r+2)i+1}}
c_{w_j,r}(i)\zeta(w_1,\ldots,w_{n-i+1})\label{eq:A_i}
\end{align}
and 
\begin{align}
B_i&\coloneqq\sum_{\substack{w_1\ge (r+2)i+1\\ w'_1,\ldots,w'_{n-i}\ge r+2}}
c_{w_1,r}(i)\bigl\{\zeta(w_1+w'_1,\ldots,w'_{n-i})+\cdots+\zeta(w'_1,\ldots,w_1+w'_{n-i})\bigr\}\\
&=\sum_{j=1}^{n-i}\sum_{\substack{w_1,\ldots,w_{n-i}\ge r+2\\ w_j\ge (r+2)(i+1)+1}}
\sum_{a=(r+2)i+1}^{w_j-(r+2)} c_{a,r}(i)\zeta(w_1,\ldots,w_{n-i}). 
\end{align}
Note that the definition of $A_i$ works for $i=n$ and gives $A_n=c_{w,r}(n)\zeta(w)$. 

For $1\le i\le n-1$, we have $A_{i+1}=B_i$ since 
\begin{align}
\sum_{a=(r+2)i+1}^{w_j-(r+2)}c_{a,r}(i)
&=\sum_{b=0}^k\frac{(r+2)i+b}{i}\binom{b+i-1}{i-1}\qquad (k\coloneqq w_j-(r+2)(i+1)-1)\\
&=(r+1)\sum_{b=0}^k\binom{b+i-1}{i-1}+\sum_{b=0}^k\binom{b+i}{i}\\
&=(r+1)\binom{k+i}{i}+\binom{k+i+1}{i+1}=c_{w_j,r}(i+1). 
\end{align}
Thus \eqref{eq:A_i+B_i} says that 
\[S_{w,r}(n,i)+S_{w,r}(n,i+1)=A_i+A_{i+1}. \]
This implies inductively that $S_{w,r}(n,i)=A_i$, starting from 
\[S_{w,r}(n,1)=\sum_{j=1}^n\sum_{\substack{w_1,\ldots,w_n\ge r+2\\ w_j\ge r+3}}
(w_j-1)\zeta(w_1,\ldots,w_n)=A_1, \] 
which is a consequence of \eqref{eq:s00} and \eqref{eq:A_i}. 
The final identity $S_{w,r}(n,n)=A_n=c_{w,r}(n)\zeta(w)$ is exactly the desired formula. 
\end{proof}

\subsection{Two corners}
\label{sec:general_nagoya}
We next study ribbon shapes with two corners which give a bounded type sum formula.
The key ingredient is the sum formula weighted by the binomial coefficients
(\cref{prop:bwsumformula}).
For ribbons with two corners, we only use the case $d=2$.
For ease of calculation,
we rewrite \cref{cor:pldep2} in the following form
with $m_1=k_1-1$ and $m_2=k_2-1$:
For integers $m_1,m_2\ge0$ and $w\ge m_1+m_2+3$,
we have
\begin{equation}
\label{eq:weighted_sum_ready_to_apply}
\begin{aligned}
&\sum_{\substack{w_1\ge 1,w_2\ge 2\\w_1+w_2=w}}
\binom{w_1-1}{m_1}\binom{w_2-1}{m_2}
\zeta(w_1,w_2)\\
&=
(-1)^{m_2+1}
\sum_{\substack{w_1,w_2\ge2\\w_1+w_2=w}}
(-1)^{w_1}
\binom{w_1-1}{m_2}\binom{w_2-1}{m_1+m_2+1-w_1}
\zeta(w_1)\zeta(w_2)\\
&\hspace{20mm}
+(-1)^{m_1+1}
\sum_{\substack{w_1\ge 1, w_2\ge2\\ w_1+w_2=w}}
\binom{w_1-1}{m_1}\binom{w_2-1}{m_1+m_2+1-w_1}\zeta(w_1,w_2)\\
&\hspace{30mm}
+\mathbbm{1}_{m_2=0}\binom{w-2}{m_1}\bigl(\zeta(w)+\zeta(1,w-1)\bigr).
\end{aligned}
\end{equation}
We then start with a preliminary calculation:
\begin{lemma}
\label{lem:ribbon_2_corner_prelim}
For $s_1,s_2\ge0$, $r_1,r_2>0$ and $w\ge s_1+s_2+r_1+r_2+2$, we have
\begin{align}
S_w\begin{varray}
s_1, & s_2\\
r_1, & r_2
\end{varray}
&=
\sum_{i=0}^{s_2-1}
(-1)^{s_2-i-1}
\sum_{\substack{w_1\ge s_1+s_2+r_1-i+1\\w_2\ge r_2+i+1\\w_1+w_2=w}}
\binom{w_1-1}{s_1}\binom{w_2-1}{i}\zeta(w_1)\zeta(w_2)\\
&\hspace{20mm}
+(-1)^{s_2}
\sum_{\substack{t_1,t_2\ge0\\t_1+t_2=s_1}}
\sum_{\substack{w_1\ge s_2+r_1+t_1+1\\w_2\ge r_2+t_2+1\\w_1+w_2=w}}
\binom{w_1-1}{t_1}\binom{w_2-1}{t_2}\zeta(w_1,w_2).
\end{align}
\end{lemma}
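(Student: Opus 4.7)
\medskip

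The plan is to reduce $S_w\begin{varray} s_1, & s_2\\ r_1, & r_2\end{varray}$ to a single-corner quantity by iterating the inductive formula from Proposition~\ref{prop:inductive}, and then to apply the closed-form evaluations already available for single-corner ribbons.

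First I would rewrite Proposition~\ref{prop:inductive} (taking $i=1$ in the two-corner setting) as
\[
S_w\begin{varray} s_1,& s_2\\ r_1,& r_2\end{varray}
=-S_w\begin{varray} s_1,& s_2-1\\ r_1+1,& r_2\end{varray}
+\sum_{w_1+w_2=w}S_{w_1}\begin{varray} s_1\\ r_1+1\end{varray}\cdot S_{w_2}\begin{varray} s_2-1\\ r_2\end{varray},
\]
valid for $s_2\geq 1$. Iterating this identity $s_2$ times, each step lowering the upper second entry by $1$ and raising $r_1$ by $1$, one obtains telescopically
\[
S_w\begin{varray} s_1,& s_2\\ r_1,& r_2\end{varray}
=(-1)^{s_2}S_w\begin{varray} s_1,& 0\\ r_1+s_2,& r_2\end{varray}
+\sum_{i=0}^{s_2-1}(-1)^{s_2-1-i}\sum_{w_1+w_2=w}S_{w_1}\begin{varray} s_1\\ r_1+s_2-i\end{varray}\cdot S_{w_2}\begin{varray} i\\ r_2\end{varray},
\]
after reindexing the cumulative product terms by $i = s_2 - 1 - j$.

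Next I would evaluate each single-corner factor using Theorem~\ref{thm:anti-hook}: $S_{w_1}\begin{varray} s_1\\ r_1+s_2-i\end{varray}=\binom{w_1-1}{s_1}\zeta(w_1)$ (subject to $w_1\geq s_1+r_1+s_2-i+1$) and $S_{w_2}\begin{varray} i\\ r_2\end{varray}=\binom{w_2-1}{i}\zeta(w_2)$ (subject to $w_2\geq r_2+i+1$). This yields exactly the first sum in the claim, with the range $w_1\geq s_1+s_2+r_1-i+1$ as stated.

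Finally I would apply Theorem~\ref{thm:s00} to the boundary term $S_w\begin{varray} s_1,& 0\\ r_1+s_2,& r_2\end{varray}$ (the case $n=2$, with $r$'s equal to $r_1+s_2$ and $r_2$ and top weight $s=s_1$). The theorem produces precisely the second sum of the claim, with the correct admissibility bounds $w_1\geq s_2+r_1+t_1+1$ and $w_2\geq r_2+t_2+1$. Assembling the two pieces proves the lemma. The whole argument is a bookkeeping exercise, and the only delicate point is making sure the signs and the range bounds survive the iteration; I expect no substantial obstacle beyond keeping the indexing consistent.
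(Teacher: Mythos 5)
Your proposal is correct and is essentially identical to the paper's own proof: the same telescoping iteration of Proposition~\ref{prop:inductive} down to the boundary term $S_w\begin{varray} s_1,& 0\\ s_2+r_1,& r_2\end{varray}$, followed by Theorem~\ref{thm:anti-hook} on the product terms (with the admissibility ranges recorded) and Theorem~\ref{thm:s00} on the boundary term. The signs, reindexing, and range bounds all check out.
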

\begin{proof}
By a repeated application of Proposition~\ref{prop:inductive},
we obtain
\begin{align}
S_{w}
\begin{varray}
s_{1}, & s_{2}\\
r_{1}, & r_{2}
\end{varray}
&=
\sum_{w_1+w_2=w}
S_{w_1}
\begin{varray}
s_{1}\\
r_{1}+1
\end{varray}
S_{w_2}
\begin{varray}
s_{2}-1\\
r_{2}
\end{varray}
-
S_{w}\begin{varray}
s_{1},  & s_{2}-1\\
r_{1}+1, & r_{2}
\end{varray}\\
&=\cdots\\
&=
\sum_{i=1}^{s_2}
(-1)^{i-1}
\sum_{w_1+w_2=w}
S_{w_1}
\begin{varray}
s_{1}\\
r_{1}+i
\end{varray}
S_{w_2}
\begin{varray}
s_{2}-i\\
r_{2}
\end{varray}
+(-1)^{s_2}
S_{w}\begin{varray}
s_{1},     & 0\\
s_2+r_{1}, & r_{2}
\end{varray}\\
&=
\sum_{i=0}^{s_2-1}
(-1)^{s_2-i-1}
\sum_{w_1+w_2=w}
S_{w_1}
\begin{varray}
s_{1}\\
s_2+r_{1}-i
\end{varray}
S_{w_2}
\begin{varray}
i\\
r_{2}
\end{varray}
+(-1)^{s_2}
S_{w}\begin{varray}
s_{1},     & 0\\
s_2+r_{1}, & r_{2}
\end{varray}.
\end{align}
By applying Theorem~\ref{thm:anti-hook}
to the first sum with taking care of admissible range
and by applying Theorem~\ref{thm:s00}
to the second sum, we obtain the lemma.
\end{proof}


As an immediate consequence of the expression above, 
one finds that there is indeed a polynomial type sum formula
for a specific class of ribbons with two corners, that is, the case $r_2=s_2+r_1$.

\begin{thm}
\label{thm:Yamasaki_polynomial_observation}
For $s_1,s_2\ge0$, $r_1>0$ and $w\ge s_1+2s_2+2r_1+2$, we have
\[
S_w\begin{varray}
s_1, & s_2\\
r_1, & s_2+r_1
\end{varray}
\]
is a polynomial in single zeta values.
\end{thm}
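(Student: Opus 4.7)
The plan is to reduce to Lemma~\ref{lem:ribbon_2_corner_prelim} specialized at $r_2=s_2+r_1$ and exploit the symmetry that this specialization creates. Setting $r_2=s_2+r_1$ in the lemma gives
\begin{align*}
S_w\begin{varray} s_1, & s_2\\ r_1, & s_2+r_1 \end{varray}
&=\sum_{i=0}^{s_2-1}(-1)^{s_2-i-1}\!\!\!\!\sum_{\substack{w_1\ge s_1+s_2+r_1-i+1\\w_2\ge s_2+r_1+i+1\\w_1+w_2=w}}\!\!\!\!
\binom{w_1-1}{s_1}\binom{w_2-1}{i}\zeta(w_1)\zeta(w_2)\\
&\quad+(-1)^{s_2}\sum_{\substack{t_1,t_2\ge 0\\ t_1+t_2=s_1}}
\sum_{\substack{w_1\ge s_2+r_1+t_1+1\\ w_2\ge s_2+r_1+t_2+1\\ w_1+w_2=w}}
\binom{w_1-1}{t_1}\binom{w_2-1}{t_2}\zeta(w_1,w_2).
\end{align*}
The first sum is already a $\Q$-linear combination of products of single zeta values, so I only need to handle the second sum, call it $T$.

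The key observation is that the specialization $r_2=s_2+r_1$ makes the constraints on $(t_1,w_1)$ and on $(t_2,w_2)$ identical in shape: both pairs are subject to the same inequality $w_j\ge s_2+r_1+t_j+1$ with the same summation $t_1+t_2=s_1$ and $w_1+w_2=w$. Hence relabeling $(t_1,w_1)\leftrightarrow (t_2,w_2)$ shows that $T$ equals its "reversed" counterpart in which $\zeta(w_1,w_2)$ is replaced by $\zeta(w_2,w_1)$. Averaging the two expressions for $T$ yields
\[
T=\frac{(-1)^{s_2}}{2}\sum_{\substack{t_1+t_2=s_1}}\sum_{\substack{w_1,w_2\ge s_2+r_1+1\\ w_j\ge s_2+r_1+t_j+1\\ w_1+w_2=w}}\binom{w_1-1}{t_1}\binom{w_2-1}{t_2}\bigl(\zeta(w_1,w_2)+\zeta(w_2,w_1)\bigr).
\]
Now the harmonic (stuffle) product relation
\[
\zeta(w_1,w_2)+\zeta(w_2,w_1)=\zeta(w_1)\zeta(w_2)-\zeta(w),
\]
valid for $w_1,w_2\ge 2$ (which holds throughout the sum since $s_2+r_1+1\ge 2$), converts $T$ into a $\Q$-linear combination of products of single zeta values. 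Combined with the already-polynomial first sum, this gives the claim.

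There is no real obstacle here; the whole argument is a symmetry trick followed by stuffle. The only points to verify are that the two admissibility ranges become identical after the swap (they do, precisely because $r_2=s_2+r_1$), and that the weight bound $w\ge s_1+2s_2+2r_1+2$ ensures nonvoid summation and validity of the underlying formulas (both ranges require $w_j\ge 2$, which is automatic from $r_1\ge 1$).
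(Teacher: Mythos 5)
Your proposal is correct and follows essentially the same route as the paper: specialize Lemma~\ref{lem:ribbon_2_corner_prelim} at $r_2=s_2+r_1$, observe the first sum is already a polynomial in single zeta values, symmetrize the remaining double-zeta sum using the now-symmetric constraints, and finish with the harmonic product formula. The paper's proof is exactly this symmetry-plus-stuffle argument, so there is nothing to add.
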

\begin{proof}
By Lemma~\ref{lem:ribbon_2_corner_prelim} with $r_2=s_2+r_1$,
it suffices to show
\[
\sum_{\substack{t_1,t_2\ge0\\t_1+t_2=s_1}}
\sum_{\substack{w_1\ge s_2+r_1+t_1+1\\w_2\ge s_2+r_1+t_2+1\\w_1+w_2=w}}
\binom{w_1-1}{t_1}\binom{w_2-1}{t_2}\zeta(w_1,w_2)
\]
can be written as a polynomial of single zeta values.
By symmetry, this sum is
\[
=
\frac{1}{2}
\sum_{\substack{t_1,t_2\ge0\\t_1+t_2=s_1}}
\sum_{\substack{w_1\ge s_2+r_1+t_1+1\\w_2\ge s_2+r_1+t_2+1\\w_1+w_2=w}}
\binom{w_1-1}{t_1}\binom{w_2-1}{t_2}
\bigl(\zeta(w_1,w_2)+\zeta(w_2,w_1)\bigr).
\]
Then, the result follows by the harmonic product formula. 
\end{proof}

The next theorem is a sum formula for general ribbons with two corners, which is of bounded type.
Although the explicit formula itself is rather complicated,
it is a direct consequence of \cref{eq:weighted_sum_ready_to_apply} and \cref{lem:ribbon_2_corner_prelim}.

\begin{thm}
\label{thm:ribbon_2_corner}
For $s_1,s_2\ge0$, $r_1,r_2>0$ and $w\ge s_1+s_2+r_1+r_2+2$, we have 
\begin{equation}
\label{thm:ribbon_2_corner:result}
\begin{aligned}
S_w\begin{varray}
s_1, & s_2\\
r_1, & r_2
\end{varray}
&=\binom{w-2}{s_1+s_2}\zeta(w)\\
&\hspace{10mm}
+\sum_{\substack{w_1, w_2\ge2\\w_1+w_2=w}}
A_{w_1,w_2}^{s_1,s_2,r_1,r_2}\zeta(w_1)\zeta(w_2)
+
\sum_{\substack{w_1\ge1, w_2\ge2\\w_1+w_2=w}}
B_{w_1,w_2}^{s_1,s_2,r_1,r_2}\zeta(w_1,w_2),
\end{aligned}
\end{equation}
where the integers
$A_{w_1,w_2}^{s_1,s_2,r_1,r_2}$
and
$B_{w_1,w_2}^{s_1,s_2,r_1,r_2}$
are given by
\begin{align}
A_{w_1,w_2}^{s_1,s_2,r_1,r_2}
&\coloneqq
(-1)^{w_1}C_{w_1,w_2}^{s_1,s_2}\\
&\hspace{10mm}
-\mathbbm{1}_{w_1\le s_1+r_1\ \textup{or}\ w_2\le s_2+r_2-1}
\binom{w_1-1}{s_1}\binom{w_2-2}{s_2-1}\\
&\hspace{10mm}
+\mathbbm{1}_{w_1>s_1+r_1}(-1)^{s_1+r_1+w_1}
\binom{w_1-1}{s_1}
\binom{w_2-2}{s_1+s_2+r_1-w_1}\\
&\hspace{10mm}
+\mathbbm{1}_{r_2<w_2\le s_2+r_2-1}
(-1)^{s_2+r_2+w_2}
\binom{w_1-1}{s_1}\binom{w_2-2}{r_2-1},
\\
B_{w_1,w_2}^{s_1,s_2,r_1,r_2}
&\coloneqq
C_{w_1,w_2}^{s_1,s_2}
-
(-1)^{s_2}
\sum_{\substack{
t_1,t_2\ge0\\
t_1\ge w_1-(s_2+r_1)\ \textup{or}\ t_2\ge w_2-r_2\\
t_1+t_2=s_1}}
\binom{w_1-1}{t_1}\binom{w_2-1}{t_2}
\end{align}
with
\[
C_{w_1,w_2}^{s_1,s_2}
\coloneqq
(-1)^{s_2}
\sum_{\substack{0\le i\le s_1\\1\le j\le s_2\\i+j=w_1}}
\binom{w_1-1}{i}\binom{w_2-1}{s_1-i}
-(-1)^{s_1}
\binom{w_1-1}{s_1}
\binom{w_2-2}{s_1+s_2-w_1}.
\]
\end{thm}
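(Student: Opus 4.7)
The plan is to combine \cref{lem:ribbon_2_corner_prelim} with the weighted sum identity \eqref{eq:weighted_sum_ready_to_apply}, then perform the binomial bookkeeping needed to bring the result into the form stated.

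First, I would apply \cref{lem:ribbon_2_corner_prelim} to decompose $S_w\binom{s_1,s_2}{r_1,r_2}$ as the sum of a product piece (a linear combination of $\zeta(w_1)\zeta(w_2)$ coming from anti-hook evaluations via \cref{thm:anti-hook}) and a double-zeta piece
\[
(-1)^{s_2}\sum_{\substack{t_1,t_2\ge 0\\ t_1+t_2=s_1}}\sum_{\substack{w_1\ge s_2+r_1+t_1+1\\ w_2\ge r_2+t_2+1\\ w_1+w_2=w}}\binom{w_1-1}{t_1}\binom{w_2-1}{t_2}\zeta(w_1,w_2).
\]

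Second, for each $(t_1,t_2)$ I would split the inner sum as (the full admissible range $w_1\ge 1,\,w_2\ge 2$) minus (the missing portion where $w_1\le s_2+r_1+t_1$ or $w_2\le r_2+t_2$). The missing portion, with the overall sign $-(-1)^{s_2}$, reproduces exactly the second summand in the definition of $B_{w_1,w_2}^{s_1,s_2,r_1,r_2}$. To the full-range sum I would apply \eqref{eq:weighted_sum_ready_to_apply} with $m_1=t_1$, $m_2=t_2$. This produces, for each $(t_1,t_2)$, an alternating product of single zetas, a reverse-weighted double zeta, and — only when $t_2=0$ — the exceptional term $\binom{w-2}{s_1}\bigl(\zeta(w)+\zeta(1,w-1)\bigr)$.

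Third, I would sum over $t_1+t_2=s_1$ and collect by type. The reverse-weighted double-zeta pieces collapse by a Vandermonde convolution into $C_{w_1,w_2}^{s_1,s_2}\zeta(w_1,w_2)$, supplying the main part of $B$. The alternating product pieces combine with the product piece from \cref{lem:ribbon_2_corner_prelim} to yield $A_{w_1,w_2}^{s_1,s_2,r_1,r_2}$; the three indicator-bracketed corrections in $A$ arise precisely from the boundary ranges of $w_1,w_2$ where the anti-hook admissibility fails for some of those products. Finally, the exceptional contribution at $(t_1,t_2)=(s_1,0)$, together with the $w_1=1$ component of the boundary correction (which supplies a cancelling $\zeta(1,w-1)$), produces the distinguished $\binom{w-2}{s_1+s_2}\zeta(w)$ summand after a further Vandermonde identity absorbing the $s_2$-dependence.

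The hard part will be the exact combinatorial bookkeeping: correctly tracking the signs $(-1)^{s_2}$, $(-1)^{w_1}$, $(-1)^{t_i}$ and verifying the several Vandermonde-type convolutions needed to reduce the double sum over $(t_1,t_2)$ to the compact forms of $A$, $B$, and $C$ as stated. No mathematical ideas beyond \cref{lem:ribbon_2_corner_prelim} and \eqref{eq:weighted_sum_ready_to_apply} enter the proof; the whole argument is an algebraic rearrangement.
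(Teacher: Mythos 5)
Your first two steps match the paper's opening moves: \cref{lem:ribbon_2_corner_prelim} splits $S_w\begin{varray}s_1,&s_2\\ r_1,&r_2\end{varray}=S_1+S_2$ into a product piece and a double-zeta piece, and peeling off the out-of-range portion of $S_2$ does produce exactly the second summand of $B^{s_1,s_2,r_1,r_2}_{w_1,w_2}$. The divergence — and the gap — is in what you do with the two full-range pieces. You propose to apply \eqref{eq:weighted_sum_ready_to_apply} with $m_1=t_1$, $m_2=t_2$ to the full-range part of $S_2$ and claim the resulting reverse-weighted double zetas ``collapse by a Vandermonde convolution into $C^{s_1,s_2}_{w_1,w_2}\zeta(w_1,w_2)$''. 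This cannot happen: in \eqref{eq:weighted_sum_ready_to_apply} the reverse-weighted binomial is $\binom{w_2-1}{m_1+m_2+1-w_1}=\binom{w_2-1}{s_1+1-w_1}$, independent of $(t_1,t_2)$, so summing over $t_1+t_2=s_1$ gives $(-1)^{s_2}\sum_{t_1}(-1)^{t_1+1}\binom{w_1-1}{t_1}\binom{w_2-1}{s_1+1-w_1}=-(-1)^{s_1+s_2}\binom{w_1-2}{s_1}\binom{w_2-1}{s_1+1-w_1}$, in which $s_2$ enters only as a sign. But $C^{s_1,s_2}_{w_1,w_2}$ depends on $s_2$ essentially, through the constraint $1\le j\le s_2$ and through $\binom{w_2-2}{s_1+s_2-w_1}$. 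Likewise, the exceptional $t_2=0$ term yields $(-1)^{s_2}\binom{w-2}{s_1}\zeta(w)$, which is not the stated leading term $\binom{w-2}{s_1+s_2}\zeta(w)$, and the boundary contributions you have already spent on the second summand of $B$ cannot be reused to fix this. So even if every step is executed correctly, you obtain a \emph{different} (numerically equal, via harmonic-product relations) expression for $S_w$, not the identity with the coefficients $A$, $B$, $C$ asserted in the theorem.

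The missing idea is that \eqref{eq:weighted_sum_ready_to_apply} must be applied to the \emph{product} piece, not the double-zeta piece. The paper first restores $S_1$ to the full range (the two truncation errors become the last three indicator terms of $A$), then stuffle-expands $\zeta(w_1)\zeta(w_2)=\zeta(w_1,w_2)+\zeta(w_2,w_1)+\zeta(w)$ inside the full-range sum; this produces two weighted double-zeta sums with weights $\binom{w_1-1}{s_1}\binom{w_2-1}{i}$ and $\binom{w_1-1}{i}\binom{w_2-1}{s_1}$ for $i=0,\dots,s_2-1$, and applying \eqref{eq:weighted_sum_ready_to_apply} to \emph{these} (followed by the change of variable $i\leadsto w_1-i-1$) is precisely what generates $C^{s_1,s_2}_{w_1,w_2}$ in both $A$ and $B$, as well as the pieces that assemble into $\binom{w-2}{s_1+s_2}\zeta(w)$. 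The full-range part of $S_2$, by contrast, is evaluated trivially: Chu--Vandermonde gives $\sum_{t_1+t_2=s_1}\binom{w_1-1}{t_1}\binom{w_2-1}{t_2}=\binom{w-2}{s_1}$ and the classical sum formula turns it into $(-1)^{s_2}\binom{w-2}{s_1}\zeta(w)$. You would need to add these two steps (and redo the bookkeeping accordingly) for your argument to prove the theorem as stated.
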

\begin{remark}
By our convention on binomial coefficients,
$A_{w_1,w_2}^{s_1,s_2,r_1,r_2}=B_{w_1,w_2}^{s_1,s_2,r_1,r_2}=0$ unless 
\[
w_1\le s_1+s_2+r_1 \quad \text{ or } \quad  w_2\le \max(s_2+r_2-1,s_1+r_2)
\]
and so \cref{thm:ribbon_2_corner:result}
is a bounded type sum formula
after expanding the product $\zeta(w_1)\zeta(w_2)$ by the harmonic product formula.
\end{remark}
\begin{proof}[Proof of \cref{thm:ribbon_2_corner}]
Write the equation in Lemma~\ref{lem:ribbon_2_corner_prelim} as
\[
S_w\begin{varray}
s_1, & s_2\\
r_1, & r_2
\end{varray}
=
S_1+S_2
\]
with
\begin{align}
S_1
&\coloneqq\sum_{i=0}^{s_2-1}
(-1)^{s_2-i-1}
\sum_{\substack{w_1\ge s_1+s_2+r_1-i+1\\w_2\ge r_2+i+1\\w_1+w_2=w}}
\binom{w_1-1}{s_1}\binom{w_2-1}{i}\zeta(w_1)\zeta(w_2),\\
S_2
&\coloneqq(-1)^{s_2}
\sum_{\substack{t_1,t_2\ge0\\t_1+t_2=s_1}}
\sum_{\substack{w_1\ge s_2+r_1+t_1+1\\w_2\ge r_2+t_2+1\\w_1+w_2=w}}
\binom{w_1-1}{t_1}\binom{w_2-1}{t_2}\zeta(w_1,w_2).
\end{align}


The sum $S_1$ can be rewritten as $S_1=S_{11}-S_{12}-S_{13}$ where  
\begin{align}
S_{11}
&\coloneqq
\sum_{i=0}^{s_2-1}
(-1)^{s_2-i-1}
\sum_{\substack{w_1,w_2\ge2\\w_1+w_2=w}}
\binom{w_1-1}{s_1}\binom{w_2-1}{i}\zeta(w_1)\zeta(w_2)\\
S_{12}&\coloneqq
\sum_{i=0}^{s_2-1}
(-1)^{s_2-i-1}
\sum_{\substack{2\le w_1\le s_1+s_2+r_1-i\\w_1+w_2=w}}
\binom{w_1-1}{s_1}\binom{w_2-1}{i}\zeta(w_1)\zeta(w_2)\\
S_{13}&\coloneqq
\sum_{i=0}^{s_2-1}
(-1)^{s_2-i-1}
\sum_{\substack{2\le w_2\le r_2+i\\w_1+w_2=w}}
\binom{w_1-1}{s_1}\binom{w_2-1}{i}\zeta(w_1)\zeta(w_2).
\end{align}
By the harmonic product formula, we have
$S_{11}=S_{111}+S_{112}+S_{113}$
with
\begin{align}
S_{111}
&\coloneqq
\sum_{i=0}^{s_2-1}
(-1)^{s_2-i-1}
\sum_{\substack{w_1,w_2\ge 2\\w_1+w_2=w}}
\binom{w_1-1}{s_1}\binom{w_2-1}{i}\zeta(w_1,w_2),\\
S_{112}
&\coloneqq
\sum_{i=0}^{s_2-1}
(-1)^{s_2-i-1}
\sum_{\substack{w_1,w_2\ge 2\\w_1+w_2=w}}
\binom{w_1-1}{i}\binom{w_2-1}{s_1}\zeta(w_1,w_2),\\
S_{113}
&\coloneqq
\biggl\{
\sum_{i=0}^{s_2-1}
(-1)^{s_2-i-1}
\sum_{\substack{w_1,w_2\ge 2\\w_1+w_2=w}}
\binom{w_1-1}{s_1}\binom{w_2-1}{i}
\biggr\}
\zeta(w).
\end{align}
For the sum $S_{111}$,
after supplementing the term with $w_1=1$,
by \cref{eq:weighted_sum_ready_to_apply}, we get
\begin{align}
S_{111}
&=
(-1)^{s_2}
\sum_{i=0}^{s_2-1}
\sum_{\substack{w_1,w_2\ge2\\w_1+w_2=w}}
(-1)^{w_1}
\binom{w_1-1}{i}\binom{w_2-1}{s_1+i+1-w_1}\zeta(w_1)\zeta(w_2)\\
&\hspace{5mm}
+
\sum_{i=0}^{s_2-1}(-1)^{s_1+s_2-i}
\sum_{\substack{w_1\ge 1, w_2\ge2\\ w_1+w_2=w}}
\binom{w_1-1}{s_1}\binom{w_2-1}{s_1+i+1-w_1}\zeta(w_1,w_2)\\
&\hspace{10mm}
+\mathbbm{1}_{s_2>0}(-1)^{s_2-1}\binom{w-2}{s_1}
\bigl(\zeta(w)+\zeta(1,w-1)\bigr)\\
&\hspace{15mm}
-\mathbbm{1}_{s_1=0}
\sum_{i=0}^{s_2-1}(-1)^{s_2-i-1}\binom{w-2}{i}\zeta(1,w-1),
\end{align}
where we should note that the sum $S_{111}$ is empty if $s_2=0$ and $\mathbbm{1}_{s_2>0}$
is inserted to cover such a degenerate case.
By swapping the summation
and changing variable via $i\leadsto w_1-i-1$, we have
\begin{align}
&(-1)^{s_2}
\sum_{i=0}^{s_2-1}
\sum_{\substack{w_1,w_2\ge2\\w_1+w_2=w}}
(-1)^{w_1}
\binom{w_1-1}{i}\binom{w_2-1}{s_1+i+1-w_1}\zeta(w_1)\zeta(w_2)\\
&=
(-1)^{s_2}
\sum_{\substack{w_1,w_2\ge2\\w_1+w_2=w}}
(-1)^{w_1}\zeta(w_1)\zeta(w_2)
\sum_{\substack{0\le i\le s_1\\1\le j\le s_2\\i+j=w_1}}
\binom{w_1-1}{i}\binom{w_2-1}{s_1-i}.
\end{align}
Since $\binom{w_2-1}{s_1+i+1-w_1}=0$ if $i<w_1-s_1-1$,
by changing variable via
$i\leadsto i+w_1-s_1-1$, we have
\begin{align}
&\sum_{i=0}^{s_2-1}(-1)^{s_1+s_2-i}
\sum_{\substack{w_1\ge 1, w_2\ge2\\ w_1+w_2=w}}
\binom{w_1-1}{s_1}\binom{w_2-1}{s_1+i+1-w_1}\zeta(w_1,w_2)\\
&=
-
(-1)^{s_1}
\sum_{\substack{w_1\ge1,w_2\ge2\\w_1+w_2=w}}
\binom{w_1-1}{s_1}\binom{w_2-2}{s_1+s_2-w_1}
\zeta(w_1,w_2).
\end{align}
Combining the above two formulas with
$\sum_{i=0}^{s_2-1}(-1)^{s_2-i-1}\binom{w-2}{i}
=\binom{w-3}{s_2-1}$,
we have
\begin{align}
S_{111}
&=
(-1)^{s_2}
\sum_{\substack{w_1,w_2\ge2\\w_1+w_2=w}}
(-1)^{w_1}\zeta(w_1)\zeta(w_2)
\sum_{\substack{0\le j\le s_1\\1\le i\le s_2\\i+j=w_1}}
\binom{w_1-1}{j}\binom{w_2-1}{s_1-j}\\
&\hspace{5mm}
-(-1)^{s_1}
\sum_{\substack{w_1\ge1,w_2\ge2\\w_1+w_2=w}}
\binom{w_1-1}{s_1}\binom{w_2-2}{s_1+s_2-w_1}
\zeta(w_1,w_2)\\
&\hspace{10mm}
-\mathbbm{1}_{s_2>0}(-1)^{s_2}
\binom{w-2}{s_1}\bigl(\zeta(w)+\zeta(1,w-1)\bigr)
-\mathbbm{1}_{s_1=0}
\binom{w-3}{s_2-1}\zeta(1,w-1).
\end{align}
Similarly, for the sum $S_{112}$,
we can apply \cref{eq:weighted_sum_ready_to_apply}
to get
\begin{align}
S_{112}
&=
\sum_{i=0}^{s_2-1}(-1)^{s_1+s_2-i}
\sum_{\substack{w_1,w_2\ge2\\w_1+w_2=w}}
(-1)^{w_1}
\binom{w_1-1}{s_1}\binom{w_2-1}{s_1+i+1-w_1}
\zeta(w_1)\zeta(w_2)\\
&\hspace{10mm}
+(-1)^{s_2}
\sum_{i=0}^{s_2-1}
\sum_{\substack{w_1\ge 1, w_2\ge2\\ w_1+w_2=w}}
\binom{w_1-1}{i}\binom{w_2-1}{s_1+i+1-w_1}\zeta(w_1,w_2)\\
&\hspace{30mm}
+\mathbbm{1}_{s_1=0}\sum_{i=0}^{s_2-1}(-1)^{s_2-i-1}\binom{w-2}{i}
\bigl(\zeta(w)+\zeta(1,w-1)\bigr)\\
&\hspace{40mm}
+\mathbbm{1}_{s_2>0}
(-1)^{s_2}\binom{w-2}{s_1}\zeta(1,w-1).
\end{align}
By a calculation similar to that of $S_{111}$, we have
\begin{align}
S_{112}
&=
-(-1)^{s_1}
\sum_{\substack{w_1,w_2\ge2\\w_1+w_2=w}}
(-1)^{w_1}
\binom{w_1-1}{s_1}\binom{w_2-2}{s_1+s_2-w_1}
\zeta(w_1)\zeta(w_2)\\
&\hspace{5mm}
+(-1)^{s_2}
\sum_{\substack{w_1\ge1,w_2\ge2\\w_1+w_2=w}}
\zeta(w_1,w_2)
\sum_{\substack{0\le i\le s_1\\1\le j\le s_2\\i+j=w_1}}
\binom{w_1-1}{i}\binom{w_2-1}{s_1-i}\\
&\hspace{15mm}
+\mathbbm{1}_{s_1=0}\binom{w-3}{s_2-1}\bigl(\zeta(w)+\zeta(1,w-1)\bigr)
+\mathbbm{1}_{s_2>0}(-1)^{s_2}\binom{w-2}{s_1}\zeta(1,w-1).
\end{align}
For the sum $S_{113}$,
by $\sum_{i=0}^{s_2-1}(-1)^{s_2-i-1}\binom{w_2-1}{i}=\binom{w_2-2}{s_2-1}$ we have
\begin{align}
S_{113}
&=\biggl\{\sum_{i=0}^{s_2-1}
(-1)^{s_2-i-1}
\sum_{\substack{w_1,w_2\ge2\\w_1+w_2=w}}
\binom{w_1-1}{s_1}\binom{w_2-1}{i}\biggr\}\zeta(w)\\
&=
\biggl\{
\sum_{\substack{w_1\ge1, w_2\ge2\\w_1+w_2=w}}
\binom{w_1-1}{s_1}
\binom{w_2-2}{s_2-1}
-\mathbbm{1}_{s_1=0}\binom{w-3}{s_2-1}
\biggr\}\zeta(w)\\
&=
\biggl\{
\mathbbm{1}_{s_2>0}\binom{w-2}{s_1+s_2}
-\mathbbm{1}_{s_1=0}\binom{w-3}{s_2-1}
\biggr\}\zeta(w).
\end{align}

We next consider the sum $S_{12}$ and $S_{13}$.
By swapping the summation, we have
\begin{align}
S_{12}
&=
\sum_{\substack{2\le w_1\le s_1+s_2+r_1\\w_1+w_2=w}}
\binom{w_1-1}{s_1}\zeta(w_1)\zeta(w_2)
\sum_{i=0}^{\min(s_2-1,s_1+s_2+r_1-w_1)}
(-1)^{s_2-i-1}\binom{w_2-1}{i}\\
&=
\sum_{\substack{2\le w_1\le s_1+r_1\\w_1+w_2=w}}
\binom{w_1-1}{s_1}\zeta(w_1)\zeta(w_2)
\sum_{i=0}^{s_2-1}
(-1)^{s_2-i-1}\binom{w_2-1}{i}\\
&\hspace{10mm}
+
\sum_{\substack{s_1+r_1<w_1\le s_1+s_2+r_1\\w_1+w_2=w}}
\binom{w_1-1}{s_1}\zeta(w_1)\zeta(w_2)
\sum_{i=0}^{s_1+s_2+r_1-w_1}
(-1)^{s_2-i-1}\binom{w_2-1}{i}\\
&=
\sum_{\substack{2\le w_1\le s_1+r_1\\w_1+w_2=w}}
\binom{w_1-1}{s_1}\binom{w_2-2}{s_2-1}
\zeta(w_1)\zeta(w_2)\\
&\hspace{10mm}
-
\sum_{\substack{s_1+r_1<w_1\le s_1+s_2+r_1\\w_1+w_2=w}}
(-1)^{s_1+r_1+w_1}
\binom{w_1-1}{s_1}
\binom{w_2-2}{s_1+s_2+r_1-w_1}
\zeta(w_1)\zeta(w_2).
\end{align}
Similarly, we have
\begin{align}
S_{13}
&=
\sum_{\substack{2\le w_2\le s_2+r_2-1\\w_1+w_2=w}}
\binom{w_1-1}{s_1}\binom{w_2-2}{s_2-1}\zeta(w_1)\zeta(w_2)\\
&\hspace{10mm}
-\sum_{\substack{r_2<w_2\le s_2+r_2-1\\w_1+w_2=w}}
(-1)^{s_2+r_2+w_2}
\binom{w_1-1}{s_1}\binom{w_2-2}{r_2-1}\zeta(w_1)\zeta(w_2).
\end{align}

For the sum $S_{2}$, we can dissect as
$S_{2}=S_{21}-S_{22}-S_{23}$
by writing
\begin{align}
S_{21}
&\coloneqq
(-1)^{s_2}
\sum_{\substack{t_1,t_2\ge0\\t_1+t_2=s_1}}
\sum_{\substack{w_1\ge 1,w_2\ge 2\\w_1+w_2=w}}
\binom{w_1-1}{t_1}\binom{w_2-1}{t_2}\zeta(w_1,w_2),\\
S_{22}
&\coloneqq
(-1)^{s_2}
\sum_{\substack{t_1,t_2\ge0\\t_1+t_2=s_1}}
\sum_{\substack{1\le w_1\le s_2+r_1+t_1\\w_1+w_2=w}}
\binom{w_1-1}{t_1}\binom{w_2-1}{t_2}\zeta(w_1,w_2),\\
S_{23}
&\coloneqq
(-1)^{s_2}
\sum_{\substack{t_1,t_2\ge0\\t_1+t_2=s_1}}
\sum_{\substack{2\le w_2\le r_2+t_2\\w_1+w_2=w}}
\binom{w_1-1}{t_1}\binom{w_2-1}{t_2}\zeta(w_1,w_2).
\end{align}
For $S_{21}$, we can calculate the sum over binomial coefficients and get
\begin{align}
S_{21}
=
(-1)^{s_2}\binom{w-2}{s_1}
\sum_{\substack{w_1\ge 1,w_2\ge 2\\w_1+w_2=w}}
\zeta(w_1,w_2)
=
(-1)^{s_2}\binom{w-2}{s_1}\zeta(w)
\end{align}
by the usual sum formula.
For $S_{22},S_{23}$, note that
\begin{align}
S_{22}
&=
(-1)^{s_2}
\sum_{\substack{w_1\ge1,w_2\ge2\\w_1+w_2=w}}
\zeta(w_1,w_2)
\sum_{\substack{t_1,t_2\ge0\\t_1\ge w_1-(s_2+r_1)\\t_1+t_2=s_1}}
\binom{w_1-1}{t_1}\binom{w_2-1}{t_2},
\\
S_{23}
&=
(-1)^{s_2}
\sum_{\substack{w_1\ge1,w_2\ge2\\w_1+w_2=w}}
\zeta(w_1,w_2)
\sum_{\substack{t_1,t_2\ge0\\t_2\ge w_2-r_2\\t_1+t_2=s_1}}
\binom{w_1-1}{t_1}\binom{w_2-1}{t_2}.
\end{align}
Since the range of $(t_1,t_2)$ for $S_{22}$ and $S_{23}$ are disjoint, we have
\begin{align}
S_{22}
+
S_{23}
=
(-1)^{s_2}
\sum_{\substack{w_1\ge1,w_2\ge2\\w_1+w_2=w}}
\zeta(w_1,w_2)
\sum_{\substack{t_1,t_2\ge0\\
t_1\ge w_1-(s_2+r_1)\ \text{or}\ t_2\ge w_2-r_2\\
t_1+t_2=s_1}}
\binom{w_1-1}{t_1}\binom{w_2-1}{t_2}.
\end{align}

By combining the above calculations, we obtain the theorem.
\end{proof}

As a special case of Theorem~\ref{thm:ribbon_2_corner},
we obtain the following sum formula of bounded type
for the hook shape. Note that the left hand side of the next corollary corresponds to the hook shape $(s,1^{r})$ when $s\ge2$.

\begin{cor}
\label{cor:hook_sum_formula}
For $s,r\ge1$ and $w\ge s+r+2$, we have 
\begin{align}
S_w
\begin{varray}
 0, & s-1\\
r, & 1
\end{varray}
&=\binom{w-2}{s-1}\zeta(w)
-\sum^{s-1}_{k=1}\binom{w-k-2}{s-k-1}\zeta(k,w-k)
+(-1)^{s}\sum^{s+r-1}_{k=s}\zeta(k,w-k)\\
&\ \ \ -\sum^{s-1}_{k=2}(-1)^k\binom{w-k-2}{s-k-1}\zeta(k)\zeta(w-k)
-\sum^{r}_{k=2}\binom{w-k-2}{s-2}\zeta(k)\zeta(w-k)\\
&\ \ \ +(-1)^{r}\sum^{s+r-1}_{k=r+1}(-1)^k\binom{w-k-2}{r+s-1-k}\zeta(k)\zeta(w-k).
\end{align}
\end{cor}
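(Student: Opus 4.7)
The plan is to deduce this corollary by a direct substitution into Theorem \ref{thm:ribbon_2_corner} with the parameters $s_1=0$, $s_2=s-1$, $r_1=r$, $r_2=1$, followed by a careful simplification of the combinatorial coefficients. The weight condition $w\ge s+r+2$ matches the hypothesis $w\ge s_1+s_2+r_1+r_2+2$ of the theorem, and the leading term $\binom{w-2}{s_1+s_2}\zeta(w)=\binom{w-2}{s-1}\zeta(w)$ is immediate.

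The first step is to compute the auxiliary quantity $C_{w_1,w_2}^{0,s-1}$. Since $s_1=0$ forces $i=0$ in the defining sum, the first piece reduces to $(-1)^{s-1}\mathbbm{1}_{1\le w_1\le s-1}$, and the second piece simplifies using $\binom{w_1-1}{0}=1$ to $-\binom{w_2-2}{s-1-w_1}$. I then plug this into the formulas for $A_{w_1,w_2}^{0,s-1,r,1}$ and $B_{w_1,w_2}^{0,s-1,r,1}$. For the $B$-coefficient, the constraint $t_1+t_2=s_1=0$ forces $t_1=t_2=0$; the condition $t_2\ge w_2-1$ is incompatible with $w_2\ge 2$, so the inner sum collapses to $\mathbbm{1}_{w_1\le s+r-1}$, yielding
\[B_{w_1,w_2}^{0,s-1,r,1}=(-1)^{s-1}\mathbbm{1}_{1\le w_1\le s-1}-\binom{w_2-2}{s-1-w_1}-(-1)^{s-1}\mathbbm{1}_{1\le w_1\le s+r-1}.\]
A small case split on $w_1\in[1,s-1]$ versus $w_1\in[s,s+r-1]$ (using the vanishing of $\binom{w_2-2}{s-1-w_1}$ for $w_1\ge s$) then produces exactly the two double-zeta sums in the corollary: $-\sum_{k=1}^{s-1}\binom{w-k-2}{s-k-1}\zeta(k,w-k)$ and $(-1)^{s}\sum_{k=s}^{s+r-1}\zeta(k,w-k)$.

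For the product terms $\zeta(w_1)\zeta(w_2)$, I split $A_{w_1,w_2}^{0,s-1,r,1}$ into its four contributions in the theorem. The second contribution $-\mathbbm{1}_{w_1\le r\text{ or }w_2\le s-1}\binom{w_2-2}{s-2}$ vanishes on the branch $w_2\le s-1$ (since $\binom{w_2-2}{s-2}=0$ there) and produces exactly $-\sum_{k=2}^{r}\binom{w-k-2}{s-2}\zeta(k)\zeta(w-k)$ after setting $k=w_1$. The third contribution gives $\sum_{k=r+1}^{s+r-1}(-1)^{r+k}\binom{w-k-2}{s+r-1-k}\zeta(k)\zeta(w-k)$ directly. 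The remaining first and fourth contributions are parametrized differently (one in $w_1$, one in $w_2$), so I re-index the fourth by $k=w_2$ using the symmetry $\zeta(w_1)\zeta(w_2)=\zeta(w_2)\zeta(w_1)$. The key cancellation is then that the constant term $(-1)^{w_1}(-1)^{s-1}\mathbbm{1}_{1\le w_1\le s-1}$ from the first contribution exactly kills the entire fourth contribution $(-1)^{s+w_2}\mathbbm{1}_{w_2\le s-1}$ after this re-indexing, leaving only $-\sum_{k=2}^{s-1}(-1)^{k}\binom{w-k-2}{s-k-1}\zeta(k)\zeta(w-k)$.

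The entire argument is mechanical, but the main obstacle is bookkeeping: the indicator functions in the theorem overlap in subtle ways, and one must carefully identify which binomial coefficients vanish on which ranges. In particular, the cancellation between the sign-bearing $\mathbbm{1}_{1\le w_1\le s-1}$ piece of the first contribution and the $\mathbbm{1}_{w_2\le s-1}$ piece of the fourth contribution is the only place where two nontrivial terms combine, and verifying it cleanly is the crux of the simplification.
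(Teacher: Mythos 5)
Your proposal is correct and follows exactly the paper's route: the paper's proof of this corollary is literally ``Follows from Theorem \ref{thm:ribbon_2_corner} and some rearrangement of terms,'' and your specialization $s_1=0$, $s_2=s-1$, $r_1=r$, $r_2=1$ together with the bookkeeping you describe (including the cancellation between the $\mathbbm{1}_{1\le w_1\le s-1}$ piece of $(-1)^{w_1}C^{0,s-1}_{w_1,w_2}$ and the re-indexed fourth contribution) is precisely that rearrangement. I verified the case splits and each of the six resulting sums matches the stated formula.
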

\begin{proof}
Follows from \cref{thm:ribbon_2_corner}
and some rearrangement of terms.
\end{proof}

\begin{example}
\label{ex:hook case}
For the shape $\lambda = (3,1,1)$ we obtain for $w\ge 7$ the sum formula
\begin{align}
S_{w}\left(\,
{\footnotesize
\ytableausetup{centertableaux, boxsize=0.6em}
\begin{ytableau}
\, & \,  &\, \\
\, \\
\, \\
\end{ytableau}}\,
\right) &= S_w
\begin{varray}
 0, & 2\\
2, & 1
\end{varray}\\
&=\binom{w-2}{2}\zeta(w)-(w-3)\zeta(2)\zeta(w-2)-(w-5)\zeta(3)\zeta(w-3)+\zeta(4)\zeta(w-4)
\\
&\hspace{0.2cm}-(w-3)\zeta(1,w-1)-\zeta(2,w-2)-\zeta(3,w-3)-\zeta(4,w-4)
\end{align}
from \cref{cor:hook_sum_formula} by taking $s=3$ and $r=2$.
\end{example}

\section{Diagrams with one corner}
\newcommand{\HH}{\mathfrak{H}}
\label{sec:OneCorner}
In this section, we consider general shapes with one corner. Recall that in Section \ref{sec:weightedsum} we defined for an index $\bk=(k_1,\dots,k_d)$ and $l\geq 0$
\begin{align}\label{eq:defQl}
Q_l(\bk)&=
\sum_{\substack{\bw=(w_1,\ldots,w_d):\text{ adm.}\\ \wt(\bw)=\wt(\bk)+l}}
\binom{w_1-1}{k_1-1}\cdots \binom{w_{d-1}-1}{k_{d-1}-1}\binom{w_{d}-2}{k_{d}-1}
\zeta(\bw)\,.
\end{align}
We will show in Theorem \ref{thm:onecornerwithphi} that our target $S_{w}(\lambda/\mu)$ defined in \eqref{eq:sk} with $\lambda/\mu$ having one corner can be expressed explicitly in terms of $Q_{w-|\lambda/\mu|}$. This will follow from a purely combinatorial argument and we will see that this statement is already true for truncated Schur MZVs. For an integer $M\geq 1$ and an arbitrary, not necessarily admissible, Young tableau $\bk$, these are defined by 
\begin{align}\label{eq:deftruncschurmzv}
    \zeta_M(\bk) = \sum_{\substack{(m_{i,j}) \in \SSYT(\lambda \slash \mu)\\m_{i,j}<M}} \prod_{(i,j) \in D(\lambda \slash \mu)}  \frac{1}{m_{i,j}^{k_{i,j}}} \,.
\end{align}
In particular, for an integer $M\geq 1$ and an index $\bk = (k_1,\dots,k_d)$ with $k_1,\dots,k_d\geq 1$ the truncated MZVs are given by 
 \begin{align}
     \zeta_M(k_1,\dots,k_d) = \sum_{0 < m_1 < \dots < m_d < M} \frac{1}{m_1^{k_1}\dots m_d^{k_d}}\,.
 \end{align}
To give the precise statement of above mentioned theorem, we will need to introduce some algebraic setup following \cite{Hoffman97}. 
Denote by $\HH^1=\Q\langle z_k \mid k\geq 1\rangle$ the non-commutative polynomial ring in the variables $z_k$ for $k\geq1$. A monic monomial in $\HH^1$ is called a word and the empty word will be denoted by ${\bf 1}$. 
For $k\geq 1$ and $n\in \Z$, we define
\begin{align}
    z_k^n = \left\{\begin{array}{cl}
        \underbrace{z_k\cdots z_k}_{n}&\text{if $n>0$},\\[7mm]
        \mathbf{1}&\text{if $n=0$},\\[2mm]
        0&\text{if $n<0$}.
    \end{array}\right.
\end{align}

There is a one-to-one correspondence between indices and words; 
to each index $\bk=(k_1,\ldots,k_d)$ corresponds the word 
$z_\bk\coloneqq z_{k_1}\cdots z_{k_d}$. 
Thus we can extend various functions on indices to $\Q$-linear maps on $\HH^1$. 
For example, we define $Q_l\colon\HH^1\to\R$ by setting $Q_l(z_\bk)=Q_l(\bk)$ and 
extending it linearly. 

We define the stuffle product $\ast$ and the index shuffle product $\tsh$ on $\HH^1$ as the $\Q$-bilinear products, which satisfy ${\bf 1} \ast w = w \ast {\bf 1} = w$ and ${\bf 1} \tsh w = w \tsh {\bf 1} = w$ for any word $w\in \HH^1$ and for any $i,j \geq 1$ and words $w_1,w_2 \in \HH^1$,
\begin{align}
z_{i} w_1 \ast z_{j} w_2 &= z_i (w_1 \ast z_{j} w_2) + z_j (z_i w_1 \ast w_2) + z_{i+j} (w_1 \ast  w_2)\,, \\
z_{i} w_1 \tsh z_{j} w_2 &= z_i (w_1 \tsh z_{j} w_2) + z_j (z_i w_1 \tsh w_2)\,.
\end{align}
By \cite[Theorem 2.1]{Hoffman97} we obtain a commutative $\Q$-algebra $\HH^{1}_{\ast}$.

\begin{lemma}\label{lem:SSD}
Let $D_1,\ldots,D_r$ be non-empty subsets of the skew Young diagram $D(\lambda/\mu)$ which gives a disjoint decomposition 
of $D(\lambda/\mu)$, i.e., $D(\lambda/\mu)=D_1\sqcup\dots\sqcup D_r$. 
Then the following conditions are equivalent: 
\begin{enumerate}[label=\textup{(\roman*)}]
\item If we set $t_{ij}=a$ for $(i,j)\in D_a$ with $a=1,\ldots,r$, 
then $(t_{ij})$ is a semi-standard Young tableau of shape $\lambda/\mu$. 
\item There exists a semi-standard Young tableau $(m_{ij})$ of shape $\lambda/\mu$ such that 
\[m_{ij}<m_{kl}\iff a<b\]
holds for any $(i,j)\in D_a$ and $(k,l)\in D_b$. 
\end{enumerate}
\end{lemma}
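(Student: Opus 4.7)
The plan is to verify each implication directly by unpacking definitions; neither direction requires a deep argument.

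For the direction $(i) \Rightarrow (ii)$, I would simply take $(m_{ij}) \coloneqq (t_{ij})$. By hypothesis this is already a semi-standard Young tableau, and for any $(i,j) \in D_a$ and $(k,l) \in D_b$ we have $m_{ij} = a$ and $m_{kl} = b$, so the required equivalence $m_{ij} < m_{kl} \iff a < b$ reduces to the tautology $a < b \iff a < b$.

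For the reverse direction $(ii) \Rightarrow (i)$, fix a semi-standard tableau $(m_{ij})$ satisfying the stated equivalence. The first step is to observe that, since the order on $\Z$ is total, the biconditional $m_{ij} < m_{kl} \iff a < b$ can be rewritten in the weak form $m_{ij} \leq m_{kl} \iff a \leq b$ (and in particular $m_{ij} = m_{kl}$ iff $a = b$). It then remains to verify the two defining inequalities for $(t_{ij})$ to be semi-standard. For a pair $(i,j),(i+1,j) \in D(\lambda/\mu)$ with $(i,j) \in D_a$ and $(i+1,j) \in D_b$, the column-strict inequality $m_{ij} < m_{i+1,j}$ yields $a < b$, i.e., $t_{ij} < t_{i+1,j}$. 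Analogously, for $(i,j),(i,j+1) \in D(\lambda/\mu)$ with $(i,j) \in D_a$ and $(i,j+1) \in D_b$, the row-weak inequality $m_{ij} \leq m_{i,j+1}$ yields $a \leq b$, i.e., $t_{ij} \leq t_{i,j+1}$.

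There is no significant obstacle; both implications are essentially tautological once one observes that the equivalence in (ii) transports the order on the entries of $(m_{ij})$ bijectively onto the order on the block labels $1, \ldots, r$. The only point that must not be overlooked is the passage from strict to weak inequality via totality of the order on $\Z$, which is what makes the row-weak condition come out correctly.
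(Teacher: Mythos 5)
Your proof is correct and follows essentially the same route as the paper: the forward direction is witnessed by $(m_{ij})=(t_{ij})$, and the converse transports the (strict and, by totality, weak) order relations from $(m_{ij})$ to the labels, so that semi-standardness of $(m_{ij})$ gives semi-standardness of $(t_{ij})$. The paper's own proof is just a more compressed version of the same observation.
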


\begin{proof}
Obviously, (i) implies (ii). Conversely, assume that a semi-standard tableau $(m_{ij})$ satisfies 
the condition in (ii). If $(t_{ij})$ is defined as in (i), one has 
\[m_{ij}<m_{kl}\iff a<b\iff t_{ij}<t_{kl}\]
for any $(i,j)\in D_a$ and $(k,l)\in D_b$, which shows that $(t_{ij})$ is also semi-standard. 
Thus (ii) implies (i). 
\end{proof}

We call a tuple $(D_1,\ldots,D_r)$ of non-empty subsets of $D(\lambda/\mu)$ 
satisfying the equivalent conditions (i) and (ii) of Lemma \ref{lem:SSD} a \emph{semi-standard decomposition}. 
Let $\SSD(\lambda/\mu)$ denote the set of all semi-standard decompositions of $D(\lambda/\mu)$. 
Then we define an element $\varphi(\lambda/\mu)$ of $\mathfrak{H}^1$ by 
\[\varphi(\lambda/\mu)\coloneqq \sum_{(D_1,\ldots,D_r)\in\SSD(\lambda/\mu)}
z_{\abs{D_1}}\cdots z_{\abs{D_r}}, \]
where $\abs{D_i}$ denotes the number of elements of $D_i$. 
This element is related to the sum formula as follows. 

\begin{thm}\label{thm:onecornerwithphi}
When $\lambda/\mu$ has only one corner, we have for $w>\abs{\lambda/\mu}$
\[S_w(\lambda/\mu)=Q_{w-\abs{\lambda/\mu}}(\varphi(\lambda/\mu)). \]
\end{thm}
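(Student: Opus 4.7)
The plan is to expand each Schur MZV of shape $\lambda/\mu$ into a sum of ordinary MZVs indexed by semi-standard decompositions, swap the two summations, and recognize the inner index-sum as a weighted sum of the form $Q_l$.

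First I would recall (or derive directly from \cref{lem:SSD}) the expansion
\[
\zeta(\bk) = \sum_{(D_1,\ldots,D_r)\in\SSD(\lambda/\mu)}
\zeta\bigl(\abs{\bk_{D_1}},\ldots,\abs{\bk_{D_r}}\bigr),
\]
where $\abs{\bk_{D_a}}\coloneqq\sum_{(i,j)\in D_a}k_{i,j}$. The point of \cref{lem:SSD} is exactly that grouping the boxes of a semi-standard filling $(m_{i,j})$ by equal value, in increasing order, yields an element of $\SSD(\lambda/\mu)$, and conversely every SSD arises this way; so summing first over SSDs and then over the strictly increasing values of the groups reproduces the defining sum of $\zeta(\bk)$.

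Next I would use the one-corner hypothesis to ensure admissibility on both sides. Because every semi-standard filling of $\lambda/\mu$ attains its maximum at a corner, and $\lambda/\mu$ has a unique corner $(i_0,j_0)$, the top group $D_r$ of any SSD must contain $(i_0,j_0)$. An admissible Young tableau $\bk$ has $k_{i_0,j_0}\ge 2$, so $\abs{\bk_{D_r}}\ge 2$, which is precisely the admissibility of the MZV on the right. Substituting the expansion of $\zeta(\bk)$ into $S_w(\lambda/\mu)$ and swapping the order of summation, I am left, for each fixed SSD $(D_1,\ldots,D_r)$, with summing $\zeta(w_1,\ldots,w_r)$ weighted by the number of admissible tableaux $\bk$ of weight $w$ with $\abs{\bk_{D_a}}=w_a$.

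The combinatorial count is then routine: filling $D_a$ ($a<r$) with positive integers summing to $w_a$ gives $\binom{w_a-1}{\abs{D_a}-1}$ choices, while filling $D_r$ with positive integers summing to $w_r$ and with the corner entry $\ge 2$ gives $\binom{w_r-2}{\abs{D_r}-1}$ choices. Comparing the resulting expression
\[
\sum_{(D_1,\ldots,D_r)\in\SSD(\lambda/\mu)}\;
\sum_{\substack{w_1,\ldots,w_r\ge 1,\ w_r\ge 2\\ w_1+\cdots+w_r=w}}
\binom{w_1-1}{\abs{D_1}-1}\cdots\binom{w_{r-1}-1}{\abs{D_{r-1}}-1}\binom{w_r-2}{\abs{D_r}-1}
\zeta(w_1,\ldots,w_r)
\]
with the definition \eqref{eq:defQl} identifies it as $Q_{w-\abs{\lambda/\mu}}\bigl(z_{\abs{D_1}}\cdots z_{\abs{D_r}}\bigr)$, and summing over SSDs collapses to $Q_{w-\abs{\lambda/\mu}}(\varphi(\lambda/\mu))$ by $\Q$-linearity.

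I do not anticipate a serious obstacle: the identity is essentially combinatorial, and the proof works verbatim at the truncated level by replacing $\zeta$ with $\zeta_M$ throughout. The only delicate point is verifying that the one-corner hypothesis really forces $(i_0,j_0)\in D_r$ for \emph{every} SSD (not just those coming from a given tableau), which follows from the maximum-at-a-corner observation above; this is where the hypothesis is used, and it is also what guarantees that no non-admissible MZVs appear on the right-hand side.
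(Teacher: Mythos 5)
Your proposal is correct and follows essentially the same route as the paper's proof: expand each $\zeta(\bk)$ over semi-standard decompositions via Lemma \ref{lem:SSD}, note that the unique corner forces itself into the top block $D_r$ (ensuring admissibility of the resulting MZVs), swap summations, and count compositions to recover the binomial weights defining $Q_{w-\abs{\lambda/\mu}}$. The only addition is your explicit justification that the maximum of a semi-standard filling sits at a corner, which the paper asserts without elaboration.
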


\begin{proof}
For any admissible Young tableau $\bk=(k_{ij})$ of shape $\lambda/\mu$, we see that  
\begin{equation}\label{eq:zeta(bk) via SSD}
\zeta(\bk)=\sum_{(D_1,\ldots,D_r)\in\SSD(\lambda/\mu)}
\zeta\Biggl(\sum_{(i,j)\in D_1}k_{ij},\ldots,\sum_{(i,j)\in D_r}k_{ij}\Biggr) 
\end{equation}
by classifying the semi-standard tableaux $(m_{ij})$ of shape $\lambda/\mu$ 
according to the semi-standard decompositions $(D_1,\ldots,D_r)$ determined as in Lemma \ref{lem:SSD} (ii). 
Then, from the definition of $S_w(\lambda/\mu)$, we have 
\begin{align}
&S_w(\lambda/\mu)
=\sum_{\substack{\bk \in \YT(\lambda \slash \mu) \\ \bk: \text{ adm.}\\\wt(\bk)=w}} \zeta(\bk)
=\sum_{\substack{k_{ij}\ge 1,\,(i,j)\in D(\lambda/\mu)\\ k_{ij}\ge 2,\,(i,j)\in C(\lambda/\mu)\\ 
\sum_{(i,j)}k_{ij}=w}}
\sum_{(D_1,\ldots,D_r)\in\SSD(\lambda/\mu)}
\zeta\Biggl(\sum_{(i,j)\in D_1}k_{ij},\ldots,\sum_{(i,j)\in D_r}k_{ij}\Biggr)\\
&=\sum_{(D_1,\ldots,D_r)\in\SSD(\lambda/\mu)}
\sum_{\substack{w_1,\ldots,w_{r-1}\ge 1\\ w_r\ge 2\\ w_1+\cdots+w_r=w}}
\binom{w_1-1}{\abs{D_1}-1}\cdots\binom{w_{r-1}-1}{\abs{D_{r-1}}-1}\binom{w_r-2}{\abs{D_r}-1}
\zeta(w_1,\ldots,w_r). 
\end{align}
Here recall that $C(\lambda/\mu)$ is the set of corners of $\lambda/\mu$ and note that, for any $(D_1,\ldots,D_r)\in\SSD(\lambda/\mu)$, the unique corner of $\lambda/\mu$ belongs to $D_r$. 
The last expression above is equal to 
\[\sum_{(D_1,\ldots,D_r)\in\SSD(\lambda/\mu)}
Q_{w-\abs{\lambda/\mu}}(\abs{D_1},\ldots,\abs{D_r}) 
=Q_{w-\abs{\lambda/\mu}}(\varphi(\lambda/\mu)).\]
Thus the proof is complete. 
\end{proof}

By Remark \ref{rem:plisbounded} we see that Theorem \ref{thm:onecornerwithphi} gives a sum formula of bounded type for shapes with one corner. In order to evaluate the sum $S_w(\lambda/\mu)$ in the one corner case, 
one therefore needs to find an expression of $\varphi(\lambda/\mu)$. 
For this purpose, the following expression is useful: 

\begin{prop}\label{prop:jacobitrudiphi}
For any skew shape $\lambda/\mu$, let $\lambda'=(\lambda'_1,\ldots,\lambda'_s)$ 
and $\mu'=(\mu'_1,\ldots,\mu'_s)$ be the conjugates of $\lambda$ and $\mu$, respectively. 
Then we have the identity 
\[\varphi(\lambda/\mu)=\det\nolimits_*\bigl[z_1^{\lambda'_i-\mu'_j-i+j}\bigr]_{1\le i,j\le s}, \]
where $\det\nolimits_*$ denotes the determinant performed in the stuffle algebra $\HH^1_*$.
\end{prop}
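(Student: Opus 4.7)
The natural strategy is to transport the identity through the isomorphism between $\HH^1_*$ and the ring of quasi-symmetric functions $\mathrm{QSym}$. Define the $\Q$-linear map $\pi\colon \HH^1 \to \mathrm{QSym}$ on the basis by $z_{\bk} \mapsto M_{\bk}$, where $M_{\bk} = \sum_{i_1 < \cdots < i_r} x_{i_1}^{k_1}\cdots x_{i_r}^{k_r}$ is the monomial quasi-symmetric function indexed by $\bk=(k_1,\ldots,k_r)$. Since the multiplication of the $M_{\bk}$'s is governed by the quasi-shuffle rule, which coincides precisely with our stuffle product $*$, the map $\pi$ is an isomorphism of $\Q$-algebras from $\HH^1_*$ onto $\mathrm{QSym}$. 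Crucially, $\pi(z_1^n) = M_{(1^n)} = e_n$, the $n$-th elementary symmetric function, which lies in the subring $\mathrm{Sym}\subset\mathrm{QSym}$.

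The first step is to identify $\pi(\varphi(\lambda/\mu))$ combinatorially. Expanding each $M_{(|D_1|,\ldots,|D_r|)}$ as a sum over strict chains $i_1<\cdots<i_r$, a pair consisting of a semi-standard decomposition $(D_1,\ldots,D_r)\in\SSD(\lambda/\mu)$ together with such a chain is exactly the data of a semi-standard tableau of shape $\lambda/\mu$ which assigns the value $i_a$ to every cell in $D_a$ (this is just \cref{lem:SSD}, read as a bijection). Hence
\[
\pi(\varphi(\lambda/\mu))
= \sum_{(D_1,\ldots,D_r)\in\SSD(\lambda/\mu)}\sum_{i_1<\cdots<i_r} x_{i_1}^{|D_1|}\cdots x_{i_r}^{|D_r|}
= \sum_{T\in\SSYT(\lambda/\mu)} x^T = s_{\lambda/\mu},
\]
the skew Schur function. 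Now invoking the classical dual Jacobi-Trudi identity $s_{\lambda/\mu} = \det[e_{\lambda'_i-\mu'_j-i+j}]$ in $\mathrm{Sym}$, together with $\pi(z_1^n)=e_n$ and the fact that $\pi$ is an algebra homomorphism, yields
\[
\pi(\varphi(\lambda/\mu))
= \det[\pi(z_1^{\lambda'_i-\mu'_j-i+j})]
= \pi\bigl(\det\nolimits_*[z_1^{\lambda'_i-\mu'_j-i+j}]\bigr),
\]
and injectivity of $\pi$ gives the desired identity in $\HH^1_*$.

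The main point to verify, rather than a genuine obstacle, is the algebra isomorphism $\HH^1_* \cong \mathrm{QSym}$; this is essentially the definition of the quasi-shuffle product on $\mathrm{QSym}$ and is standard, but it should be stated explicitly since the identification is the one piece of external machinery on which everything else rides. An alternative self-contained approach would perform a Lindström-Gessel-Viennot lattice-path argument directly in the stuffle algebra, with intersecting path configurations cancelling through a sign-reversing involution that respects the stuffle merges $z_i z_j\mapsto z_{i+j}$; this avoids citing Jacobi-Trudi but is considerably more elaborate, so the indirect proof via $\mathrm{QSym}$ is preferable.
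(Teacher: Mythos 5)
Your proof is correct, and it is worth comparing with the paper's because, while the overall architecture is identical, the realization and the external inputs are different. The paper also transports the identity through an injective algebra homomorphism out of $\mathfrak{H}^1_*$: it uses the family of truncated evaluation maps $\zeta_M\colon\mathfrak{H}^1_*\to\Q$ ($M\ge 1$), shows by the same Lemma~\ref{lem:SSD} bookkeeping that $\zeta_M(\varphi(\lambda/\mu))=\zeta_M(\mathbf{O}_{\lambda/\mu})$ for the all-ones tableau $\mathbf{O}_{\lambda/\mu}$, invokes the Jacobi--Trudi formula for truncated Schur MZVs from \cite{NakasujiPhuksuwanYamasaki2018,Bachmann2018}, and concludes by the joint injectivity of $(\zeta_M)_{M\ge1}$ proved in \cite{Yamamoto13}. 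You replace this family of scalar homomorphisms by the single isomorphism $\pi\colon\mathfrak{H}^1_*\xrightarrow{\ \sim\ }\mathrm{QSym}$, identify $\pi(\varphi(\lambda/\mu))=s_{\lambda/\mu}$ by the identical combinatorial bijection, and cite the classical dual (N\"agelsbach--Kostka) Jacobi--Trudi identity in place of its truncated Schur MZV incarnation. What your version buys is independence from the Schur MZV literature: the only inputs are Hoffman's quasi-shuffle isomorphism \cite{Hoffman97} and textbook symmetric function theory, and the injectivity step becomes trivial rather than a cited theorem. What the paper's version buys is economy within its own framework: $\zeta_M$ and the truncated Jacobi--Trudi formula are already in play elsewhere, so no new machinery is introduced. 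Since $\zeta_M$ is precisely the specialization $x_i\mapsto 1/i$ for $i<M$ and $x_i\mapsto 0$ otherwise composed with your $\pi$, the two arguments are in substance the same proof at two levels of generality, yours being the universal form. Two small points to make explicit if you write this up: the matrix size $s$ should be taken to be (at least) $\lambda_1$, with $\lambda'$ and $\mu'$ padded by zeros, which is the convention under which the dual Jacobi--Trudi determinant is the standard one; and the symbol $\det\nolimits_*$ is meaningful because $\mathfrak{H}^1_*$ is commutative, which your isomorphism also delivers for free.
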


\begin{proof}
For any integer $M>0$, we have 
\[\zeta_M(\varphi(\lambda/\mu))
=\sum_{(D_1,\ldots,D_r)\in\SSD(\lambda/\mu)}\zeta_M(\abs{D_1},\ldots,\abs{D_r})
=\zeta_M(\mathbf{O}_{\lambda/\mu}), \]
where $\mathbf{O}_{\lambda/\mu}$ denotes the tableau of shape $\lambda/\mu$ all entries of which are $1$. 
Indeed, the first equality is obvious from the definition of $\varphi(\lambda/\mu)$, 
and the second follows in the same way as \eqref{eq:zeta(bk) via SSD}. 

On the other hand, by the Jacobi-Trudi type formula for truncated Schur MZVs 
(\cite[Theorem 1.1]{NakasujiPhuksuwanYamasaki2018}, \cite[Theorem 4.7]{Bachmann2018}), we have 
\[\zeta_M(\mathbf{O}_{\lambda/\mu})
=\det\bigl[\zeta_M(z_1^{\lambda'_i-\mu'_j-i+j})\bigr]_{1\le i,j\le s}
=\zeta_M\Bigl(\det\nolimits_*\bigl[z_1^{\lambda'_i-\mu'_j-i+j}\bigr]_{1\le i,j\le s}\Bigr).\]
Since the map $\HH^1 \rightarrow \prod_{M\geq 1} \Q$ given by $w \mapsto (\zeta_M(w))_{M\geq 1}$ is 
injective (\cite[Theorem 3.1]{Yamamoto13}), we obtain the desired identity in $\HH^1$. 
\end{proof}

\ytableausetup{boxsize=5pt}
In some cases, one can compute $\varphi(\lambda/\mu)$ explicitly 
by using Proposition \ref{prop:jacobitrudiphi}. 

\begin{thm}
For $n\ge 1$ and $k\ge 0$, it holds that 
\begin{align}
\varphi
\bigl(\,(2^{n+k})/(1^k)\,\bigr)
&=
\sum^{n}_{l=0}
\frac{k+1}{l+k+1}\binom{2l+k}{l} z_2^{n-l}\tsh z_1^{2l+k}.
\end{align}
\end{thm}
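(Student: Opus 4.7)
The plan is to apply Proposition~\ref{prop:jacobitrudiphi} directly. For $\lambda=(2^{n+k})$ the conjugate is $\lambda'=(n+k,n+k)$, and for $\mu=(1^k)$ it is (padded to length $s=2$) $\mu'=(k,0)$. The resulting $2\times 2$ Jacobi--Trudi matrix has entries $z_1^n, z_1^{n+k+1}$ in its top row and $z_1^{n-1}, z_1^{n+k}$ in its bottom row, so that in $\HH^1_\ast$
\begin{equation*}
\varphi\bigl((2^{n+k})/(1^k)\bigr)
= z_1^n \ast z_1^{n+k} - z_1^{n+k+1} \ast z_1^{n-1}.
\end{equation*}

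The second ingredient I will need is the following expansion of a stuffle product of powers of $z_1$ in terms of index shuffles:
\begin{equation*}
z_1^a \ast z_1^b
= \sum_{q=0}^{\min(a,b)}\binom{a+b-2q}{a-q}\bigl(z_1^{a+b-2q}\tsh z_2^q\bigr).
\end{equation*}
This is a routine classification of the stuffle terms by the number $q$ of merges: a term with $q$ merges yields a word of length $a+b-q$ with $p\coloneqq a+b-2q$ letters $z_1$ and $q$ letters $z_2$, and any fixed such word arises in exactly $\binom{p}{a-q}$ ways, namely by choosing which of its $p$ unmerged $z_1$'s come from the left factor. Since $z_1^p\tsh z_2^q$ contains each word with $p$ letters $z_1$ and $q$ letters $z_2$ exactly once, the identity follows.

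Applying this expansion to both summands and re-indexing via $m=n-l$, the first product contributes the range $m=0,\ldots,n$ while the second contributes $m=1,\ldots,n$; adjoining a vanishing $m=0$ term to the second (since $\binom{k}{k+1}=0$) unifies the ranges and yields
\begin{equation*}
\varphi\bigl((2^{n+k})/(1^k)\bigr)
= \sum_{m=0}^n \left[\binom{2m+k}{m} - \binom{2m+k}{m+k+1}\right] z_2^{n-m}\tsh z_1^{2m+k}.
\end{equation*}
The last step is the elementary identity $\binom{2m+k}{m+k+1}\big/\binom{2m+k}{m}=m/(m+k+1)$, which transforms the bracket into $\frac{k+1}{m+k+1}\binom{2m+k}{m}$. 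Commutativity of $\tsh$ then gives the claimed formula.

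The only substantive step is the combinatorial expansion of $z_1^a\ast z_1^b$; everything else is a formal manipulation together with one binomial identity. As a sanity check, the $k=0$ specialization gives the coefficient $\frac{1}{l+1}\binom{2l}{l}$, i.e., the Catalan number, which is consistent with the well-known semi-standard content of the rectangle $(2^n)$.
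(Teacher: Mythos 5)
Your proof is correct and follows essentially the same route as the paper: Proposition~\ref{prop:jacobitrudiphi} gives the $2\times 2$ stuffle determinant $z_1^{n}\ast z_1^{n+k}-z_1^{n+k+1}\ast z_1^{n-1}$, the expansion of $z_1^a\ast z_1^b$ into terms $z_2^q\tsh z_1^{a+b-2q}$ is applied to both products, and the same binomial simplification finishes. The only difference is that you prove that expansion by a direct quasi-shuffle counting argument, whereas the paper cites it as \cite[Lemma 1]{Chen15}.
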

\begin{proof}
By \cref{prop:jacobitrudiphi}, we have 
\begin{align}
\varphi\bigl(\,(2^{n+k})/(1^k)\,\bigr)
=
\begin{vmatrix}
 z^n_1 & z^{n+k+1}_1 \\
 z^{n-1}_1  & z^{n+k}_1
\end{vmatrix}
=z^{n+k}_1 \ast z^{n}_1 - z^{n+k+1}_1 \ast z^{n-1}_1.
\end{align} 
By \cite[Lemma 1]{Chen15} we get for $m\ge n\ge 1$
\begin{align}
\label{for:z1z1}
 z^m_1\ast z^n_1
&=\sum^{n}_{l=0}\binom{m+n-2l}{m-l}\sum_{\bk\in G^{m+n-2l}_{l}}z_\bk
=\sum^{n}_{l=0}\binom{m+n-2l}{n-l}z_2^{l} \tsh z_1^{m+n-2l},
\end{align}
 where $G^a_b$ is the set of all possible indices containing $a$ times $1$ and $b$ times $2$. This gives 
\begin{align}
&\varphi\bigl(\,(2^{n+k})/(1^k)\,\bigr)\\
&=z^{n+k}_1 \ast z^{n}_1 - z^{n+k+1}_1 \ast z^{n-1}_1\\
&=
\sum^{n}_{l=0}\binom{2n+k-2l}{n-l}
z_2^{l}\tsh z_1^{2n+k-2l}
-
\sum^{n-1}_{l=0}\binom{2n+k-2l}{n-l-1}
z_2^{l}\tsh z_1^{2n+k-2l}\\
&=\sum^{n}_{l=0}
\left(\binom{2n+k-2l}{n-l}-\binom{2n+k-2l}{n-l-1}\right)
z_2^{l}\tsh z_1^{2n+k-2l}\\
&=\sum^{n}_{l=0}
\left(\binom{2l+k}{l}-\binom{2l+k}{l-1}\right)
z_2^{n-l}\tsh z_1^{2l+k}\\
&=\sum^{n}_{l=0}
\frac{k+1}{l+k+1}\binom{2l+k}{l} z_2^{n-l}\tsh z_1^{2l+k}.\qedhere
\end{align} 
\end{proof}

For some shapes $\lambda/\mu$, the $\varphi(\lambda/\mu)$ will contain sums over all indices over a fixed weight and depth. The corresponding sums of $Q$ applied to these can be evaluated by using the following lemma.
\begin{lemma}
\label{lem:binomprodsum}
 For $k\ge d\ge1, w\ge d+1$ and an index $\bn=(n_1,\dots,n_d)$, we have
\begin{align}
\sum_{\substack{\bk=(k_1,\dots,k_d)\\\wt(\bk)=k}}P_{w-k}(\bn;\bk)
=\binom{w-\wt(\bn)}{k-d}\zeta(w).
\end{align}
 In particular, 
\[
 \sum_{\substack{\bk=(k_1,\dots,k_d)\\\wt(\bk)=k}}P_{w-k}(\bk)
=\binom{w-d}{k-d}\zeta(w), \qquad
 \sum_{\substack{\bk=(k_1,\dots,k_d)\\\wt(\bk)=k}}Q_{w-k}(\bk)
=\binom{w-d-1}{k-d}\zeta(w).
\]
\end{lemma}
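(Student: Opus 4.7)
My plan is a direct expansion of the definition. I would start by substituting the definition of $P_{w-k}(\bn;\bk)$ inside the LHS and swapping the finite sums over $\bk$ and $\bw$, obtaining
\[
\sum_{\substack{\bk\\\wt(\bk)=k}} P_{w-k}(\bn;\bk)
= \sum_{\substack{\bw \text{ adm.}\\ w_i \ge n_i\\ \wt(\bw) = w}} \zeta(\bw) \sum_{\substack{(k_1,\ldots,k_d)\in\Z_{\ge 1}^d\\ \wt(\bk) = k}} \prod_{i=1}^d \binom{w_i - n_i}{k_i - 1}.
\]
Substituting $j_i = k_i - 1 \ge 0$ turns the inner $\bk$-sum into a Vandermonde-type convolution $\sum_{j_1+\cdots+j_d = k-d,\,j_i\ge 0} \prod_i \binom{w_i - n_i}{j_i}$, and the generalised Vandermonde identity (valid under the paper's convention on binomial coefficients for arbitrary integer top entries, as long as $k-d\ge 0$) collapses it to $\binom{\sum_i (w_i - n_i)}{k-d} = \binom{w - \wt(\bn)}{k-d}$.

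Pulling this $\bw$-independent factor outside reduces the LHS to
\[
\binom{w - \wt(\bn)}{k - d} \sum_{\substack{\bw \text{ adm.}\\ w_i \ge n_i\\ \wt(\bw)=w}} \zeta(\bw).
\]
For the two `in particular' cases $\bn = (1,\ldots,1)$ and $\bn = (1,\ldots,1,2)$, the lower bounds $w_i \ge n_i$ coincide with admissibility (namely $w_1,\ldots,w_{d-1}\ge 1$ and $w_d \ge 2$), so the remaining sum runs over all admissible indices of weight $w$ and depth $d$. By the classical sum formula \eqref{eq:mzvsumformula} it equals $\zeta(w)$, and substituting $\wt(\bn)=d$ or $\wt(\bn)=d+1$ then produces the two stated identities.

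The main step is the last one --- recognising the restricted $\bw$-sum as the unrestricted admissible sum that the classical sum formula handles. This reduction is automatic precisely in the two special cases actually used in the later sections, which is what dictates the appearance of $P_l$ and $Q_l$ (rather than $P_l(\bn;\cdot)$ for other $\bn$) in the applications. The preceding Vandermonde collapse is purely combinatorial and is just the mechanism producing the binomial coefficient $\binom{w-\wt(\bn)}{k-d}$ on the right-hand side.
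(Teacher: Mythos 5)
Your argument is exactly the paper's: the paper's entire proof of this lemma is the remark that the identity ``follows directly by using the Chu--Vandermonde identity and the usual sum formula for MZVs,'' which is precisely your swap of the two finite sums, the Vandermonde collapse of the inner $\bk$-sum to $\binom{w-\wt(\bn)}{k-d}$, and the evaluation of the outer $\bw$-sum by the classical sum formula. The two ``in particular'' identities, which are the only instances used elsewhere in the paper, are fully proved by your argument.

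One point worth making explicit: your remark that the reduction of the restricted sum $\sum_{\bw\,\mathrm{adm.},\,w_i\ge n_i,\,\wt(\bw)=w}\zeta(\bw)$ to $\zeta(w)$ works ``precisely in the two special cases'' is not just a stylistic caveat --- the first display of the lemma is actually false for general $\bn$. Take $d=2$, $\bn=(2,2)$, $k=2$, $w=4$ (all hypotheses $k\ge d\ge 1$, $w\ge d+1$ hold): the left-hand side is
\[
P_{2}\bigl((2,2);(1,1)\bigr)=\zeta(2,2)=\frac{3}{4}\zeta(4),
\]
while the right-hand side is $\binom{0}{0}\zeta(4)=\zeta(4)$. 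So the general-$\bn$ claim needs the additional hypothesis that the constraints $w_i\ge n_i$ are implied by admissibility (i.e.\ $n_i\le 1$ for $i<d$ and $n_d\le 2$); your proof is complete and correct for the cases that are true and that the paper actually uses, and the defect lies in the statement rather than in your argument.
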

\begin{proof}
This follows directly by using the Chu-Vandermonde identity and the usual sum formula for MZVs. 
\end{proof}

\begin{remark}
For $s\ge 0$ and $r\ge 1$, one can show the identity 
\[
\varphi\big(((s+1)^{r})/(s^{r-1})\big)
=\sum^{s}_{l=0}\binom{r+l-1}{l}
\sum_{\substack{\bk=(k_1,\ldots,k_{r+l})\\ \wt(\bk)=r+s}}z_\bk
\]
by counting how many times $z_\bk$ appears for each $\bk$, as in the proof of Theorem \ref{thm:s00}.
(One can also show this by induction on $s$ via the expression
\begin{align}
\varphi\big(((s+1)^{r})/(s^{r-1})\big)
&=\det\nolimits_*\!
\left[
\begin{array}{ccccc}
z_1 & z^2_1 & \cdots & z^s_1 & z^{r+s}_{1}  \\
1 & z_1 & \cdots & z^{s-1}_1 & z^{r+s-1}_1 \\
0 & \ddots & \ddots & \vdots & \vdots \\
\vdots & \ddots & \ddots & z_1 & z^{r+1}_1 \\
0 & \cdots & 0 & 1 & z^r_1 
\end{array}
\right]
\end{align}
obtained from Proposition~\ref{prop:jacobitrudiphi}
together with \eqref{for:z1z1}.)
Using Theorem \ref{thm:onecornerwithphi} and  Lemma~\ref{lem:binomprodsum} this gives another way of proving the anti-hook sum formula in Theorem \ref{thm:anti-hook}
\begin{align}
 S_{w}\big(((s+1)^{r})/(s^{r-1})\big)&=Q_{w-(r+s)}\left(\varphi\big(((s+1)^{r})/(s^{r-1})\big)\right)\\
&=\sum^{s}_{l=0}\binom{r+l-1}{l}
\sum_{\substack{\bk=(k_1,\ldots,k_{r+l})\\ \wt(\bk)=r+s}}Q_{w-(r+s)}(\bk)\\
&=\sum^{s}_{l=0}\binom{r+l-1}{l}
\binom{w-r-l-1}{s-l}\zeta(w)\\
&=\binom{w-1}{s}\zeta(w)\,.
\end{align}
\end{remark}

We can summarize the general strategy to give a bounded expression of $S_{w}(\lambda/\mu)$ for the case when $\lambda/\mu$ has one corner as follows:
\begin{enumerate}[label=\textup{(\roman*)}]
\item Give an expression for $\varphi(\lambda/\mu)$, by evaluating the determinant in Proposition \ref{prop:jacobitrudiphi} by the stuffle product. Then use Theorem \ref{thm:onecornerwithphi} to get $S_{w}(\lambda/\mu)=Q_{w-|\lambda/\mu|}(\varphi(\lambda/\mu))$.
\item If sums of $Q_l$ over all indices of a fixed weight and depth appear, use Lemma \ref{lem:binomprodsum} to write them in terms of Riemann zeta values.
\item For other terms involving $Q$ write them in terms of $P$, by using \eqref{for:PtoQ}, i.e.,
\begin{equation}
 Q_l(k_1,\dots,k_d)
 =\sum^{k_d-1}_{j=0}(-1)^jP_{l+j}(k_1,\ldots,k_{d-1},k_d-j).
\end{equation}
Then use Proposition \ref{prop:bwsumformula} to get recursive (bounded) expressions of $P_{l+j}$ in terms of MZVs. For depth $2$ and $3$ explicit expressions are given by Corollary \ref{cor:pldep2} and \ref{cor:pldep3}.
\end{enumerate}

\begin{example} Using above strategy we get formula \eqref{eq:22square} for $  S_w\left(\,{\footnotesize \ytableausetup{centertableaux, boxsize=0.8em}
	\begin{ytableau}
	\, & 	\,  \\
	\, & 	\,
	\end{ytableau}}\,\right)$ in the introduction and the following examples:
\begin{enumerate}[label=\textup{(\roman*)}]
\item For $w\ge 6$ we have
\begin{align}
S_w\left(\
	\begin{ytableau}
\none &	\, & 	\,  \\
\,&	\, & 	\,
	\end{ytableau}\ \right) 
&=\binom{w-2}{2}\zeta(2) \zeta(w-2)-\frac{5}{4}\zeta(4)\zeta(w-4)+\zeta(2) \zeta(2,w-4)
\\
&\ \ \ -\zeta(2) \zeta(1,w-3)- \binom{w-2}{2}\zeta(1,w-1)
+\binom{w-3}{2} \zeta(2,w-2)\\
&\ \ \ +(w-3) \zeta(3,w-3)+(w-3) \zeta(1,1,w-2)
-(w-5) \zeta(1,2,w-3)\\
&\ \ \ -2 \zeta(1,3,w-4)+\zeta(2,1,w-3)-\zeta(2,2,w-4).\\
\end{align}
\item For $w\ge 6$ we have
\begin{align}
S_w&\left(\ 
	\begin{ytableau}
	\none & \,  \\
	\, & 	\,  \\
	\, & 	\,
	\end{ytableau}\ \right) 
=(w-2) \zeta(2) \zeta(w-2)+(w-5)\zeta(3)\zeta(w-3)-\frac{5}{4}\zeta(4)\zeta(w-4)\\
&\ \ \ -\zeta(2) \zeta(1,w-3)+\zeta(2) \zeta(2,w-4)+(2-w)\zeta(1,w-1)+(w-4) \zeta(2,w-2)\\
&\ \ \ +2 \zeta(3,w-3)+(w-3) \zeta(1,1,w-2)-(w-5) \zeta(1,2,w-3)\\
&\ \ \ -2 \zeta(1,3,w-4)+\zeta(2,1,w-3)-\zeta(2,2,w-4).
\end{align}
\end{enumerate}
Comparing (i) and (ii) with \eqref{eq:22square}, we see that for all $w \geq 1$ we have
\begin{align}\label{eq:examplerelationofs}
 2 S_w\left(\ 
	\begin{ytableau}
\none &	\, & 	\,  \\
\,&	\, & 	\,
	\end{ytableau}\ \right) -   2S_w\left(\ 
	\begin{ytableau}
	\none & \,  \\
	\, & 	\,  \\
	\, & 	\,
	\end{ytableau}\ \right)  
 &= (w-5) S_w\left(\,{\footnotesize 
	\begin{ytableau}
	\, & 	\,  \\
	\, & 	\,
	\end{ytableau}}\,\right)\,.
\end{align}
\end{example}

We will now show that the relation \eqref{eq:examplerelationofs} among $S_w$ for different shapes is a special case of a more general family of relations. For this we first notice that any skew Young diagram $D(\lambda/\mu)$ with one corner can be written as 
\[\lambda=(n^m)=(\underbrace{n,\ldots,n}_m),\quad \mu=(\mu_1,\ldots,\mu_m)\]
with $\mu_1=n$, $\mu_m>0$ (for example, the one-box diagram is represented as $(2,2)/(2,1)$). 
Then we write $I$ for the set of $i=1,\ldots,m$ that 
\[\mu[i]\coloneqq(\mu_1,\ldots,\mu_i-1,\ldots,\mu_m)\]
is non-increasing. 
Then, for $i\in I$,  $D(\lambda/\mu[i])=D(\lambda/\mu)\cup\{(i,\mu_i)\}$ 
is still a skew Young diagram with one corner.  As a generalization of \eqref{eq:examplerelationofs} we obtain the following.

\begin{thm}\label{thm:S_w rel} If $\lambda/\mu$ is a skew Young diagram with one corner then for all $w\geq 1$ 
\begin{equation}\label{eq:S_w rel}
\sum_{i\in I}\bigl((i-\mu_i)-(m-n)\bigr)S_w(\lambda/\mu[i])
=(w-\lvert\lambda/\mu\rvert-1)S_w(\lambda/\mu). 
\end{equation}
\end{thm}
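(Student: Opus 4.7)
The plan is to apply \cref{thm:onecornerwithphi} to rewrite the claim purely in terms of the weighted sums $Q_l$. Setting $l = w - \abs{\lambda/\mu}$ and $c_i = (i - \mu_i) - (m-n)$, the statement becomes
\[
\sum_{i \in I} c_i\, Q_{l-1}(\varphi(\lambda/\mu[i])) = (l-1)\, Q_l(\varphi(\lambda/\mu)),
\]
and I would establish this through two algebraic steps in $\mathfrak{H}^1$.

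First, I would introduce the $\mathbb{Q}$-linear map $W \colon \mathfrak{H}^1 \to \mathfrak{H}^1$ defined on generators by $W(z_k) = k\, z_{k+1}$ and extended to words as a derivation of concatenation. A short check on the stuffle relation $z_a \ast z_b = z_a z_b + z_b z_a + z_{a+b}$ shows that $W$ is simultaneously a derivation of the stuffle product $\ast$. I would then prove the ``weight-shift'' identity
\[
Q_{l-1}(W(\alpha)) = (l-1)\, Q_l(\alpha) \qquad (\alpha \in \mathfrak{H}^1,\ l \geq 1).
\]
By linearity this reduces to $\alpha = z_\bk$, and the coefficient of each admissible $\zeta(\bw)$ on the two sides matches after applying the elementary identity $k\binom{w-1}{k} = (w-k)\binom{w-1}{k-1}$ (and its variant $k_d\binom{w_d-2}{k_d} = (w_d - 1 - k_d)\binom{w_d-2}{k_d-1}$ for the last entry) together with the weight relation $\wt(\bw) - \wt(\bk) = l$.

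The crux is then the key identity
\[
\sum_{i \in I} c_i\, \varphi(\lambda/\mu[i]) = W(\varphi(\lambda/\mu)),
\]
which combined with the weight-shift identity yields the theorem. For this I would use the Jacobi-Trudi representation (\cref{prop:jacobitrudiphi}): writing $\varphi(\lambda/\mu) = \det_{\ast} A$ with $A_{kj} = z_1^{\lambda'_k - \mu'_j - k + j}$, the element $\varphi(\lambda/\mu[i])$ equals $\det_\ast A^{\{\mu_i\}}$, where $A^{\{j\}}$ is obtained from $A$ by incrementing each exponent in column $j$ by one. A useful observation is that for $j$ not of the form $\mu_i$ with $i \in I$ one has $\mu'_j = \mu'_{j+1}$, so the corresponding column-incremented matrix has two equal columns and vanishes under $\det_\ast$; this lets me rewrite the LHS uniformly as $\sum_{j=1}^n ((\mu'_j - j) - (m-n))\,\det_\ast A^{\{j\}}$. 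On the other hand, since $W$ is a stuffle derivation, the RHS expands as $W(\det_\ast A) = \sum_{k,j}(-1)^{k+j}\det_\ast(A_{[k,j]}) \ast W(A_{kj})$, where $W(z_1^M) = z_2 \tsh z_1^{M-1}$ and $A_{[k,j]}$ denotes the $(k,j)$-minor.

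The main obstacle will be this final algebraic identity. It requires reconciling the discrete column-shift on the LHS (adding a $z_1$ via concatenation) with the derivation $W$ on the RHS (which replaces one $z_1$ by a $z_2$); the specific coefficients $c_i = (\mu'_j - j) - (m-n)$, recognizable as the arm-minus-leg content of the added box in $\lambda$, are exactly what make the cancellation work. I would carry this out by systematically expanding both sides using the Chen-type formula $z_1^a \ast z_1^b = \sum_l \binom{a+b-2l}{b-l}\, z_2^l \tsh z_1^{a+b-2l}$ and reducing to a combinatorial identity on multinomial coefficients indexed by how exponents are distributed across the columns of $A$.
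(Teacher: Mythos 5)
Your reduction is exactly the one the paper uses: your map $W$ is the paper's derivation $\partial$, your weight-shift identity is Lemma \ref{lem:partial}\,(iii) (and your proof of it via $k\binom{w-1}{k}=(w-k)\binom{w-1}{k-1}$ is the paper's), and your ``key identity'' $\sum_{i\in I}c_i\,\varphi(\lambda/\mu[i])=W(\varphi(\lambda/\mu))$, together with the observation that columns $j$ with $\mu'_j=\mu'_{j+1}$ give vanishing determinants, is precisely what the paper establishes between \eqref{eq:S_w rel RHS1} and \eqref{eq:S_w rel LHS}. Up to that point everything you write is correct.

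The gap is that you never prove the key identity: you only announce that a brute-force expansion ``would'' reduce it to an unstated combinatorial identity, and the route you sketch is unlikely to go through as described. Expanding $W(\det\nolimits_* A)$ with $W(z_1^M)=z_2\tsh z_1^{M-1}$ and the Chen formula only handles stuffle products of \emph{two} powers of $z_1$, whereas each term of the $n\times n$ determinant is an $n$-fold stuffle product $z_1^{a_1}*\cdots*z_1^{a_n}$; you would need the full multinomial expansion of such products and a coefficient match for arbitrary $n$ and $\mu$, and nothing in your sketch explains why the specific coefficients $(\mu'_j-j)-(m-n)$ make the cancellation happen. The paper avoids all of this with one entry-wise identity, $\partial(z_1^N)=z_1*z_1^N-(N+1)z_1^{N+1}$ (Lemma \ref{lem:partial}\,(ii)): substituting it into column $k$ splits $\det\nolimits_*\bigl[\partial^{\delta_{jk}}(z_1^{m-\mu'_j-i+j})\bigr]$ into the wanted term $\bigl((\mu'_k-k)-(m-n)\bigr)\det\nolimits_* A^{\{k\}}$ plus $z_1*\det\nolimits_* A$ minus $\det\nolimits_*\bigl[(n+1-i)^{\delta_{jk}}z_1^{m-(\mu'_j-\delta_{jk})-i+j}\bigr]$, and a double cofactor expansion (exploiting equal rows and the block-triangular identity \eqref{eq:S_w rel last}) shows the last two pieces cancel after summing over $k$. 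That device, or something equivalent, is what your argument is missing; without it the decisive step of the proof remains unestablished.
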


To prove this, we need the following Lemma. 
Define a linear map $\partial\colon\HH^1\to\HH^1$ by 
\[\partial(z_{k_1}\cdots z_{k_d})\coloneqq 
\sum_{a=1}^d k_a z_{k_1}\cdots z_{k_a+1}\cdots z_{k_d}.\]
In particular, $\partial(1)=0$. 

\begin{lemma}\label{lem:partial}\leavevmode
\begin{enumerate}[label=\textup{(\roman*)}]
\item\label{lem:partial:derivation}
$\partial$ is a derivation with respect to the stuffle product. 
\item\label{lem:partial:power}
For any $N\in\Z$, we have 
\[\partial(z_1^N)=z_1*z_1^N-(N+1)z_1^{N+1}.\]
\item We have 
\[(l-1)Q_l(v)=Q_{l-1}(\partial(v))\]
for any $v\in\HH^1$. 
\end{enumerate}
\end{lemma}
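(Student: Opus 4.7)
The plan is to verify the three parts in order. Part (i) is the main structural claim and I will prove it by induction on the total length of the two words; parts (ii) and (iii) then follow by short direct computations.

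For (i), I proceed by induction on $\lvert u\rvert+\lvert v\rvert$. The base case, when $u=\mathbf{1}$ or $v=\mathbf{1}$, is immediate since $\mathbf{1}*w=w$ and $\partial(\mathbf{1})=0$. For the inductive step, write $u=z_iw_1$, $v=z_jw_2$ and expand $\partial(z_iw_1*z_jw_2)$ using the stuffle recursion together with the rule $\partial(z_kw)=kz_{k+1}w+z_k\partial(w)$ that is immediate from the definition of $\partial$ on a word. Applying the induction hypothesis to the three shorter stuffle products $w_1*z_jw_2$, $z_iw_1*w_2$, and $w_1*w_2$, and then expanding $\partial(z_iw_1)*z_jw_2+z_iw_1*\partial(z_jw_2)$ by first differentiating and then stuffling (again via the recursion), the two sides match term by term. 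The only non-trivial arithmetic check is that the coefficient of $z_{i+j+1}(w_1*w_2)$ coming from $\partial$ acting on the merged term $z_{i+j}$ equals $i+j$, which is exactly the sum of the contributions $iz_{(i+1)+j}(w_1*w_2)$ and $jz_{i+(j+1)}(w_1*w_2)$ produced by differentiating $z_i$ and $z_j$ separately before merging.

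For (ii), when $N\le 0$ both sides vanish by the conventions on $z_1^N$. For $N\ge 1$ I establish by an easy induction on $N$ using the stuffle recursion that
\begin{equation*}
z_1*z_1^N=(N+1)z_1^{N+1}+\sum_{a=1}^{N}z_1^{a-1}z_2z_1^{N-a},
\end{equation*}
and since $\partial(z_1^N)=\sum_{a=1}^{N}z_1^{a-1}z_2z_1^{N-a}$ by definition, the claimed identity follows.

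For (iii), by linearity it suffices to verify the formula for $v=z_\bk$ with $\bk=(k_1,\ldots,k_d)$. Set $\epsilon_a=1$ for $a<d$ and $\epsilon_d=2$, so that
\begin{equation*}
Q_l(\bk)=\sum_{\bw}\prod_{a=1}^d\binom{w_a-\epsilon_a}{k_a-1}\zeta(\bw),
\end{equation*}
where $\bw$ runs over admissible indices of depth $d$ and weight $\wt(\bk)+l$. The definition of $\partial$ and the fact that $\wt(\partial(z_\bk))+(l-1)=\wt(\bk)+l$ give
\begin{align*}
Q_{l-1}(\partial(z_\bk))
&=\sum_{a=1}^{d}k_a\, Q_{l-1}(k_1,\ldots,k_a+1,\ldots,k_d)\\
&=\sum_{\bw}\sum_{a=1}^{d}k_a\binom{w_a-\epsilon_a}{k_a}\prod_{i\ne a}\binom{w_i-\epsilon_i}{k_i-1}\zeta(\bw).
\end{align*}
Using the elementary identity $k_a\binom{w_a-\epsilon_a}{k_a}=(w_a-\epsilon_a-k_a+1)\binom{w_a-\epsilon_a}{k_a-1}$ one factors out the common product $\prod_i\binom{w_i-\epsilon_i}{k_i-1}$, and the remaining scalar sum telescopes via
\begin{equation*}
\sum_{a=1}^d(w_a-\epsilon_a-k_a+1)=\wt(\bw)-(d+1)-\wt(\bk)+d=l-1,
\end{equation*}
yielding $(l-1)Q_l(\bk)$ as required. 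The main obstacle is the bookkeeping in (i): matching the three terms from applying $\partial$ to the stuffle recursion with the terms obtained by stuffling $\partial$-expanded words. However, as indicated above, this reduces to the single arithmetic identity $i+j=i+j$ on the merged term and is not deep.
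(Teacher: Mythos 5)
Your proof is correct and follows essentially the same route as the paper: part (i) by induction using the stuffle recursion, part (ii) by direct computation of $z_1*z_1^N$, and part (iii) by exactly the binomial identity $k_a\binom{n}{k_a}=(n-k_a+1)\binom{n}{k_a-1}$ summed over $a$, which is the identity the paper records. The only difference is bookkeeping (your $\epsilon_a$ notation and induction on total length rather than depth), not substance.
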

\begin{proof}
\cref{lem:partial:derivation} is verified, e.g., by induction on the depth. 
It is also easy to show (ii) from the definition. 
Finally, the identity in (iii) follows from the definition of $Q_l$ and the identity 
\begin{align*}
&(w-k-1)\binom{w_1-1}{k_1-1}\cdots\binom{w_d-2}{k_d-1}\\
&=\bigl((w_1-k_1)+\cdots+(w_d-1-k_d)\bigr)
\binom{w_1-1}{k_1-1}\cdots\binom{w_d-2}{k_d-1}\\
&=\sum_{a=1}^d
k_a\binom{w_1-1}{k_1-1}\cdots\binom{w_a-1}{k_a}\cdots\binom{w_d-2}{k_d-1}, 
\end{align*}
where $w=w_1+\cdots+w_d$ and $k=k_1+\cdots+k_d$. 
\end{proof}

\begin{proof}[Proof of Theorem \ref{thm:S_w rel}]
By \cref{thm:onecornerwithphi} and \cref{prop:jacobitrudiphi}, we have 
\[S_w(\lambda/\mu)=Q_{w-\abs{\lambda/\mu}}
\Bigl(\det\nolimits_*\bigl[z_1^{m-\mu'_j-i+j}\bigr]_{1\le i,j\le n}\Bigr). \]
Here $(\mu'_1,\ldots,\mu'_n)$ denotes the transpose of the partition $\mu=(\mu_1,\ldots,\mu_m)$. 
Note that $\mu'_1=m$ and $\mu'_n>0$. 
By \cref{lem:partial} (iii) and (i), we see that  
\begin{align}\label{eq:S_w rel RHS1}
(w-\lvert\lambda/\mu\rvert-1)S_w(\lambda/\mu)
&=(w-\lvert\lambda/\mu\rvert-1)Q_{w-\abs{\lambda/\mu}}
\Bigl(\det\nolimits_*\bigl[z_1^{m-\mu'_j-i+j}\bigr]_{1\le i,j\le n}\Bigr)\\
&=Q_{w-\abs{\lambda/\mu}-1}
\Bigl(\partial\det\nolimits_*\bigl[z_1^{m-\mu'_j-i+j}\bigr]_{1\le i,j\le n}\Bigr)\\
&=\sum_{k=1}^n Q_{w-\abs{\lambda/\mu}-1}
\Bigl(\det\nolimits_*\bigl[\partial^{\delta_{jk}}(z_1^{m-\mu'_j-i+j})\bigr]_{1\le i,j\le n}\Bigr). 
\end{align}
Here $\partial^{\delta_{jk}}$ means the operator $\partial$ if $j=k$ and the identity operator 
otherwise. Moreover, since the identity 
\begin{align*}
\partial (z_1^{m-\mu'_k-i+k})
&=z_1*z_1^{m-\mu'_k-i+k}-(m-\mu'_k-i+k+1)z_1^{m-\mu'_k-i+k+1}\\ 
&=\bigl((\mu'_k-k)-(m-n)\bigr)z_1^{m-(\mu'_k-1)-i+k}\\
&\qquad +z_1*z_1^{m-\mu'_k-i+k}-(n+1-i)z_1^{m-(\mu'_k-1)-i+k}
\end{align*}
holds by \cref{lem:partial} (ii), we have 
\begin{align}\label{eq:S_w rel RHS2}
&\det\nolimits_*\bigl[\partial^{\delta_{jk}}(z_1^{m-\mu'_j-i+j})\bigr]\\
&=\bigl((\mu'_k-k)-(m-n)\bigr)\det\nolimits_*\bigl[z_1^{m-(\mu'_j-\delta_{jk})-i+j}\bigr]\\
&\qquad+z_1*\det\nolimits_*\bigl[z_1^{m-\mu'_j-i+j}\bigr]
-\det\nolimits_*\bigl[(n+1-i)^{\delta_{jk}}z_1^{m-(\mu'_j-\delta_{jk})-i+j}\bigr].
\end{align}

On the other hand, the left-hand side of \eqref{eq:S_w rel} is equal to 
\begin{equation}\label{eq:S_w rel LHS}
\sum_{k=1}^n\bigl((\mu'_k-k)-(m-n)\bigr)
Q_{w-\abs{\lambda/\mu}-1}
\Bigl(\det\nolimits_*\bigl[z_1^{m-(\mu'_j-\delta_{jk})-i+j}\bigr]\Bigr). 
\end{equation}
Here, a priori, $k$ runs only over the indices such that $\mu'_k>\mu'_{k+1}$. 
However, if $\mu'_k=\mu'_{k+1}$, the $k$-th and $(k+1)$-st columns of 
the matrix $\bigl(z_1^{m-(\mu'_j-\delta_{jk})-i+j}\bigr)_{i,j}$ are equal, 
so the determinant is zero. 

Comparing \eqref{eq:S_w rel RHS1}, \eqref{eq:S_w rel RHS2} and \eqref{eq:S_w rel LHS}, 
it suffices to prove the equality 
\begin{equation}\label{eq:S_w rel reduced}
nz_1*\det\nolimits_*\bigl[z_1^{m-\mu'_j-i+j}\bigr]
\overset{?}{=}
\sum_{k=1}^n\det\nolimits_*\bigl[(n+1-i)^{\delta_{jk}}z_1^{m-(\mu'_j-\delta_{jk})-i+j}\bigr]. 
\end{equation}
Let us compute the right-hand side by the cofactor expansion with respect to the $k$-th column. 
\begin{align*}
&\sum_{k=1}^n\det\nolimits_*\bigl[(n+1-i)^{\delta_{jk}}z_1^{m-(\mu'_j-\delta_{jk})-i+j}\bigr]\\
&=\sum_{k=1}^n\sum_{l=1}^n(-1)^{k+l}(n+1-l)
z_1^{m-(\mu'_k-1)-l+k}*\det\nolimits_*\bigl[z_1^{m-\mu'_j-i+j}\bigr]_{i\ne l,j\ne k}\\
&=\sum_{l=1}^n(n+1-l)\sum_{k=1}^n(-1)^{k+l}
z_1^{m-(\mu'_k-1)-l+k}*\det\nolimits_*\bigl[z_1^{m-\mu'_j-i+j}\bigr]_{i\ne l,j\ne k}. 
\end{align*}
Then, by the cofactor expansion with respect to the $l$-th row, we have 
\[\sum_{k=1}^n(-1)^{k+l}
z_1^{m-(\mu'_k-1)-l+k}*\det\nolimits_*\bigl[z_1^{m-\mu'_j-i+j}\bigr]_{i\ne l,j\ne k}
=\det\nolimits_*\bigl[z_1^{m-\mu'_j-(i-\delta_{il})+j}\bigr].\]
This is zero for $l=2,\ldots,n$ since the $l$-th and $(l-1)$-st rows are equal. 
Thus we have shown that the right-hand side of \eqref{eq:S_w rel reduced} is 
$n\det\nolimits_*\bigl[z_1^{m+\delta_{i1}-\mu'_j-i+j}\bigr]$, 
and now it is enough to prove 
\begin{equation}\label{eq:S_w rel last}
z_1*\det\nolimits_*\bigl[z_1^{m-\mu'_j-i+j}\bigr]
\overset{?}{=}
\det\nolimits_*\bigl[z_1^{m+\delta_{i1}-\mu'_j-i+j}\bigr]. 
\end{equation}
But this is obvious since two matrices here are of the form 
\[\begin{pmatrix}
1 & * \\
\textbf{0} & Z
\end{pmatrix}
\quad \text{and} \quad 
\begin{pmatrix}
z_1 & * \\
\textbf{0} & Z
\end{pmatrix},\]
respectively, with the common $(n-1)\times(n-1)$ matrix 
$Z=\bigl[z_1^{m-\mu'_i-i+j}\bigr]_{2\le i,j\le n}$. 
Hence the proof is complete. 
\end{proof}

\end{document}